    \newenvironment{customlegend}[1][]{%
        \begingroup
        \csname pgfplots@init@cleared@structures\endcsname
        \pgfplotsset{#1}%
    }{%
        \csname pgfplots@createlegend\endcsname
        \endgroup
    }%
    \def\addlegendimage{\csname pgfplots@addlegendimage\endcsname}
\pgfplotsset{compat=1.14}
\definecolor{sixclassRdYlBu1}{rgb}{0.84,0.19,0.15}
\definecolor{sixclassRdYlBu2}{rgb}{0.99,0.55,0.35}
\definecolor{sixclassRdYlBu3}{rgb}{1.0,0.88,0.56}
\definecolor{sixclassRdYlBu4}{rgb}{0.88,0.95,0.97}
\definecolor{sixclassRdYlBu5}{rgb}{0.57,0.75,0.86}
\definecolor{sixclassRdYlBu6}{rgb}{0.27,0.46,0.71}
\DeclarePairedDelimiterX\skal[2]{(}{)}{#1\,,\,#2}
\theoremstyle{plain}
\newtheorem{lemma}{Lemma}[section]
\newtheorem{theorem}[lemma]{Theorem}
\newtheorem{remark}[lemma]{Remark}
\newtheorem{definition}[lemma]{Definition}
\newtheorem{proposition}[lemma]{Proposition}
\newtheorem{assumption}{Assumption}
\newcommand{\hS}[1]{\hspace{#1pt}}
\newcommand{\R}{\mathbb{R}}
\newcommand{\N}{\mathbb{N}}
\newcommand{\J}{\mathcal{J}}
\newcommand{\Jhat}{\hat{\mathcal{J}}}
\newcommand{\Jnoncor}{\hat{J}}
\newcommand{\cJhatn}{{{\Jhat_\red}}}
\newcommand{\HH}{\mathcal{H}}
\newcommand{\HHhat}{\hat{\mathcal{H}}}
\newcommand{\pr}{\textnormal{pr}}
\newcommand{\du}{\textnormal{du}}
\newcommand{\dred}[1]{\tilde d_{#1}}
\newcommand{\gradred}[1]{\widetilde\nabla_{#1}}
\newcommand{\noncorgrad}{\widetilde\nabla}
\newcommand{\cont}[1]{\gamma_{#1}}
\newcommand{\bformd}{a_{\mu}}
\newcommand{\lformd}{l_{\mu}}
\newcommand{\kformd}{k_{\mu}}
\newcommand{\jformd}{j_{\mu}}
\newcommand{\resd}{r_{\mu}}
\DeclareMathOperator{\integralend}{d}
\newcommand{\intend}[1]{\integralend \hS{-1.25} #1}
\newcommand{\dx}{\intend{x}}
\newcommand{\ds}{\intend{s}}
\newcommand{\Params}{\mathcal{P}}
\newcommand{\red}{{r}}
\newcommand{\Proj}{\text{P}}
\newcommand{\labeltext}[2]{%
  \@bsphack
  \csname phantomsection\endcsname 
  \def\@currentlabel{#1}{\label{#2}}%
  \@esphack
}
\DeclareOldFontCommand{\rm}{\normalfont\rmfamily}{\mathrm}
\begin{document}

\title{A non-conforming dual approach for adaptive Trust-Region Reduced Basis approximation of PDE-constrained optimization\footnote{{\textbf{Funding:}} The authors acknowledge funding by the Deutsche Forschungsgemeinschaft for the project {\em Localized Reduced Basis Methods for PDE-constrained Parameter Optimization}
under contracts OH 98/11-1; SCHI 1493/1-1; VO 1658/6-1. Tim Keil, Mario Ohlberger and Felix Schindler
acknowledge funding by the Deutsche Forschungsgemeinschaft under Germany’s Excellence Strategy EXC 2044 390685587, Mathematics M\"unster: Dynamics -- Geometry -- Structure.
}
}
\author{Tim Keil$^\dag$, Luca Mechelli$^\ddag$, Mario Ohlberger$^\dag$,\\Felix Schindler\thanks{Mathematics Münster, Westfälische Wilhelms-Universität Münster, Einsteinstr. 62, D-48149 M\"unster, \url{{tim.keil,mario.ohlberger,felix.schindler,}@uni-muenster.de}}, Stefan Volkwein\thanks{Department of Mathematics and Statistics, University of Konstanz, D-78457 Konstanz, \url{{luca.mechelli,stefan.volkwein}@uni-konstanz.de}}}

\maketitle
\begin{abstract}
In this contribution we propose and rigorously analyze new variants of adaptive Trust-Region methods for parameter optimization with PDE constraints and bilateral parameter constraints.
The approach employs successively enriched Reduced Basis surrogate models that are constructed during the outer optimization loop and used as model function for the Trust-Region method. Each Trust-Region sub-problem is solved with the projected BFGS method. Moreover, we propose a non-conforming dual (NCD) approach to improve the standard RB approximation of the optimality system.
Rigorous improved a posteriori error bounds are derived and used to prove convergence of the resulting NCD-corrected adaptive Trust-Region Reduced Basis algorithm.
Numerical experiments demonstrate that this approach enables to reduce the computational demand for large scale or multi-scale PDE constrained optimization problems significantly.
\par\vskip\baselineskip\noindent
\textbf{Keywords}: PDE constrained optimization, Trust-Region method, error analysis, Reduced Basis method, model order reduction, parametrized systems, large scale problems
\par\vskip\baselineskip\noindent
\textbf{AMS Mathematics Subject Classification}: 49M20, 49K20, 35J20, 65N30, 90C06
\end{abstract}
%



\section*{Introduction}
\label{sec:introduction}

We are concerned with the development and rigorous analysis of novel efficient model order reduction methods for parameter optimization constrained by coercive variational state equations using the {\em first optimize,  then discretize} approach.
The methods are based on successive enrichment of the underlying reduced order models within the framework of Trust-Region optimization. 
Optimization problems constrained by partial differential equations (PDEs) arise in many fields of application 
in engineering and across all sciences. 
Examples of such problems include optimal (material) design or optimal control of processes and inverse problems, 
where parameters of a PDE model are unknown and need to be estimated from measurements. 
The numerical solution of such problems is very challenging as the underlying PDEs have to be solved repeatedly 
within outer optimization algorithms and the dimension of the parameters that need to be optimized might be 
very high or even infinite dimensional. 
PDE constrained optimization problems have been of interest for many decades. Classically, the underlying 
PDE (forward problem) is approximated by a high dimensional full order model (FOM) that results from discretization, e.g.,  
by the Finite Element or Finite Volume method.
Hence, the complexity of the optimization problem directly depends on 
the numbers of degrees of freedom (DOF) of the FOM. 
Mesh adaptivity has been advised to minimize the number of DOFs; see, e.g., 
\cite{BKR2000,MR2556843,Clever2012,MR2905006,MR1887737,MR2892950} and the references therein. 

\textbf{Model order reduction for PDE constrained optimization and optimal control.} 
A more recent  
approach is the usage of model order reduction (MOR) methods in 
order to replace the FOM by a surrogate reduced order model (ROM) of possibly very low dimension. 
MOR is a very active research field that has seen 
tremendous development in recent years, both from a theoretical and application  
point of view. For an introduction and overview 
we refer to the monographs and collections
\cite{BCOW2017,BOP+2017,HRS2016,QMN2016}. 
A particular promising model reduction approach for parameterized partial differential equations (pPDEs)
is the Reduced Basis (RB) method that relies on the approximation of the solution manifold of pPDEs
by low dimensional linear approximation spaces that are spanned from suitably selected particular solutions, 
called snapshots. 
A posteriori error estimation for solutions of the ROM with respect to the FOM
is the basis for efficient Greedy algorithms to select the snapshots in a quasi-optimal way
\cite{BCD+2011,Haasdonk2013}. Alternatively, construction of reduced bases using proper orthogonal decomposition 
(POD) may be used \cite{GV17}. 
The construction of a reduced basis and the respective projected ROM is generally called the offline phase, 
whereas evaluating the ROM is called online phase.

There exists a large amount of literature using such reduced order surrogate models for optimization methods. 
A posteriori error estimates for reduced approximation of
linear-quadratic optimization problems and parametrized
optimal control problems with control constraints were studied, e.g., in \cite{Dede2012,GK2011,KTV13,NRMQ2013,OP2007}.
In \cite{dihl15} an RB approach is proposed which also enables
an estimation on the actual error on the control variable and not only on the gradient of the output functional.   
Certified Reduced Basis methods for parametrized elliptic
optimal control problems with distributed controls were studied in \cite{KTGV2018}.
With the help of an a posteriori error estimator, ROMs
can be constructed with respect to a desired accuracy but also with respect to a local area in the parameter set \cite{EPR2010,HDO2011}.
For very high dimensional parameter sets, simultaneous parameter and state reduction has been advised \cite{HO2014,HO2015,LWG2010}.
However, constructing a reduced order surrogate for a prohibitively expensive
forward problem can also take a significant amount of computational resources. To remedy this, it is
beneficial to use optimization methods that optimize on a local level of the control variable, assuming
the surrogate only to be accurate enough in the respective parameter region.
Hence, we require an approach which goes beyond the classical offline/online decomposition.
Recently, RB methods have been advised with a progressive construction of ROMs \cite{BMV2020,GHH2016,ZF2015}. 
Also localized RB methods that are based on efficient localized a posteriori error control and online enrichment \cite{BEOR2017,OS2015} 
overcome traditional offline/online splitting and are thus particularly well suited for applications in optimization or inverse 
problems \cite{OSS2018,OS2017}.

\textbf{Trust-Region reduced order models for second-order methods.}
Trust-Region (TR) approaches are a class of optimization methods that are advantageous for the usage
of locally accurate surrogate models. The main idea is to solve optimization sub-problems only in a local
area of the parameter set which resolves the burden of constructing a global RB space.
The problem that might occur is the fact that during this minimization one usually moves away from the original parameters on which 
the reduced order model was built, and the quality of the reduced model cannot be guaranteed anymore. 
For that reason, a priori and a posteriori error analysis are required to ensure accurate reduced order 
approximations for the optimization problem; cf.~\cite{GV17,HV08,KTV13}.
In \cite{AFS00,SS13} a TR approach was proposed to control the quality of the (POD) reduced 
order model, referred to as TR-POD, a meanwhile well-established method in applications; cf.~\cite{BC08,CAN12}.

TR methods ensure global convergence for locally convergent methods.
In each iteration of the TR algorithm the nonlinear objective is replaced by a model function which can be optimized with much less effort; cf.~\cite{CGT00,NW06}.
One suitable choice for the model is a reduced order discretization of the objective (e.g., by utilizing a second-order Taylor approximation).
To ensure convergence to stationary points the accuracy of the model function and of its gradient have to be monitored.
In \cite{RoggTV17} a posteriori error bounds are utilized to monitor the approximation quality of the gradient.
We also refer to \cite{GGMRV17}, where the authors utilize basis update strategies to improve the reduced order approximation scheme with respect to the optimization goal.
The TR strategy can be combined with second-order methods for nonlinear optimization: with the Newton method to solve the reduced problem and with the SQP method for the all-at-once approach; cf.~\cite{HV06}. 

Constraints on the control and the metric for the Trust-Region radius can affect the convergence of the method.
For an error-aware TR method, the TR radius is directly characterized by the a posteriori error estimator for the 
cost functional of the surrogate model. Thus, the offline phase of the RB method can completely be omitted since the RB model can 
be adaptively enriched during the outer optimization loop. With this procedure the surrogate model eventually 
will have a high accuracy around the optimum of the optimization problem, ignoring the accuracy of the part 
which the outer (and inner) optimization loop does not approach at all. Error aware TR-RB methods can be utilized in many
different ways. 
One possible TR-RB approach has been extensively studied
in \cite{QGVW2017} for linear parametric elliptic equations, which ensures convergence of the nonlocal TR-RB. 
Note that the experiments in \cite{QGVW2017} are for up to six dimensional parameter sets without inequality constraints.
In \cite{YM2013}, the TR framework is combined with an efficient RB error bound for defining the Trust-Region in the design optimization of vibrating structures using frequency domain formulations.

\textbf{Main results.} 
In this contribution we present several significant advances for adaptive 
Trust-Region Reduced Basis optimization methods for parameterized partial differential equations:
\begin{itemize}
\item For the model function in the TR-RB approach, we follow a non-conforming dual (NCD) approach by choosing as model function the Lagrangian associated to the optimization problem. This permits more accurate results in terms of approximation of the optimal solution;
\item we provide efficiently computable a posteriori error estimates for all reduced quantities for different choices of the cost functional and its (approximate) gradient;
\item we rigorously prove the convergence of the TR-RB method with bilateral inequality constraints on the 
parameters; 
\item we devise several new adaptive enrichment strategies for the progressive construction of the 
Reduced Basis spaces;
\item we demonstrate in numerical experiments that our new TR-RB methods outperform existing model reduction approaches for large scale optimization problems in well defined benchmark problems. 
\end{itemize}

\textbf{Organization of the article.} 
In Section~\ref{sec:problem} we introduce the PDE constrained optimization problem and state first- and second-order optimality conditions. These serve as a basis for the full order discretization 
derived in Section~\ref{sec:mor}. 
Moreover, in Section~\ref{sec:mor} we introduce different strategies of model order reduction for the full order model and 
derive rigorous a posteriori error estimates for all equations, functionals, and gradient information. 
Section~\ref{sec:TRRB_and_adaptiveenrichment} is devoted to the derivation of Trust-Region -- Reduced Basis methods and the presentation of the convergence analysis of the adaptive TR-RB algorithm. 
In addition, we discuss in detail several variants of new TR-RB algorithms that differ in their respective reduced gradient information as well as in the enrichment strategies for the 
construction of the corresponding reduced models.  
All variants are thoroughly analyzed numerically in Section~\ref{sec:num_experiments}, where we consider three well defined benchmark problems. We also compare with selected state of the art optimization methods from the literature. 

\section{Problem formulation}
\label{sec:problem}

Given $\mu_\mathsf{a},\mu_\mathsf{b}\in\mathbb{R}^P$ with $P \in \N$ we consider the compact and convex admissible parameter set 
\[ 
\Params:= \left\{\mu\in\mathbb{R}^P\,|\,\mu_\mathsf{a} \leq \mu \leq \mu_\mathsf{b} \right\} \subset \R^P,
\] 
where $\leq$ is understood component-wise. Let $V$ be a real-valued Hilbert space with
inner product $(\cdot \,,\cdot)$ and induced norm $\|\cdot\|$. We are interested in efficiently approximating
PDE-constrained parameter optimization problems with the quadratic continuous cost functional
\[
\J: V \times \Params \to \R, \quad  (u,\mu) \mapsto \J(u, \mu) = \Theta(\mu) + j_\mu(u) + k_\mu(u, u),
\]
where $\Theta: \Params \to \mathbb{R}$ denotes a parameter functional and, for each $\mu \in \Params$, $j_\mu \in V'$ is a parameter-dependent continuous linear functional and $k_\mu: V \times V \to \R$ a continuous symmetric bilinear form. To be more precise, we consider the following constrained minimization problem:
{\color{white}
	\begin{equation}
	\tag{P}
	\label{P}
	\end{equation}
}\vspace{-45pt} 
\begin{subequations}\begin{equation}
	\min_{(u,\mu)\in V\times\Params} \J(u, \mu), 
	\tag{P.a}\label{P.argmin} \end{equation} 
	subject to $(u,\mu)$ satisfying the \emph{state -- or primal -- equation}
	\begin{align}
	a_\mu(u, v) = l_\mu(v) && \text{for all } v \in V,
	\tag{P.b}\label{P.state}
	\end{align}\end{subequations}%
\setcounter{equation}{0}
where, for each $\mu \in \Params$, $a_\mu: V \times V \to \R$ denotes a continuous and coercive symmetric bilinear form and $l_\mu \in V'$ denotes a continuous linear functional. For given $u \in V$, $\mu \in \Params$, we introduce the primal residual $r_\mu^\pr(u) \in V'$ associated with \eqref{P.state} by
\begin{align}
r_\mu^\pr(u)[v] := l_\mu(v) - a_\mu(u, v) &&\text{for all }v \in V.
\label{eq:primal_residual}
\end{align}
The primal residual plays a crucial role for a posteriori error analysis and for sensitivities of solution maps. 
\begin{remark}
	The Lagrange functional for \eqref{P} is given by $\mathcal{L}(u,\mu,p) = \J(u,\mu) + r_\mu^\pr(u)[p]$ for $(u,\mu)\in V\times\Params$ and for $p\in V$.
\end{remark}
To apply RB methods efficiently, we require the parametrization of the problem to be separable from $V$ throughout the work. This separability is a standard assumption for RB methods and can be circumvented by using empirical interpolation techniques \cite{BMNP2004,CS2010,DHO2012}. 

\begin{assumption}[Parameter-separability]
	\label{asmpt:parameter_separable}
	We assume $a_\mu$, $l_\mu$, $j_\mu$, $k_\mu$ to be parameter separable with $\Xi^a, \Xi^l, \Xi^j, \Xi^k \in \N$ non-parametric components $a_\xi: V \times V \to \R$ for $1 \leq \xi \leq \Xi^a$, $l_\xi \in V'$ for $1 \leq \xi \leq \Xi^l$, $j_\xi \in V'$ for $1 \leq \xi \leq \Xi^j$ and $k_\xi: V \times V \to \R$ for $1 \leq \xi \leq \Xi^k$, and respective parameter functionals $\theta_\xi^a, \theta_\xi^l, \theta_\xi^j, \theta_\xi^k: \Params \to \mathbb{R}$, such that
	\begin{align*}
	a_\mu(u, v) &= \sum_{\xi = 1}^{\Xi^a} \theta_\xi^a(\mu)\, a_\xi(u, v)
	, &&&&& l_\mu(v) &= \sum_{\xi = 1}^{\Xi^l} \theta_\xi^l(\mu)\, l_\xi(v),
	\end{align*}
	and analogously for $j_\mu$ and $k_\mu$.
\end{assumption}
Due to Assumption~\ref{asmpt:parameter_separable}, all quantities which linearly depend on $a_\mu$, $l_\mu$, $j_\mu$ and $k_\mu$ (such as $\J$ or the primal residual) are also separable w.r.t.~the parameter.
Since we will use a Lagrangian ansatz for an explicit computation of derivatives, we require some notation that we use throughout this paper. 

\subsection{A note on differentiability}
\label{sec:differentiability}

If $\J: V \times \Params \to \R$ is Fr\'echet differentiable w.r.t.~each $\mu \in \Params$, for each $u \in V$ and each $\mu \in \Params$ there exists a bounded linear functional $\partial_\mu \J(u, \mu) \in \mathbb{R}^P$, such that the Fr\'echet derivative of $\J$ w.r.t.~its second argument in the direction of $\nu \in \mathbb{R}^P$ is given by $\partial_\mu \J(u, \mu) \cdot \nu$ (noting that the dual space of $\mathbb{R}^P$ is itself). We refer to $\partial_\mu \J(u, \mu)$ as the derivative w.r.t.~$\mu$. In addition, for $u \in V$, $\mu \in \Params$ we denote the \emph{partial derivative} of $\J(u, \mu)$ w.r.t.~the $i$-th component of $\mu$ by $d_{\mu_i}\J(u, \mu)$ for $1 \leq i \leq P$. Note that $d_{\mu_i}\J(u, \mu) = \partial_\mu \J(u, \mu)\cdot e_i $, where $e_i \in \R^P$ denotes the $i$-th canonical unit vector. Furthermore, we denote the \emph{gradient} w.r.t.~its second argument -- the vector of components $d_{\mu_i}\J(u, \mu)$ -- by the operator $\nabla_\mu \J: V \times \Params \to \R^P$. Similarly, if $\J$ is Fr\'echet differentiable w.r.t.~each $u \in V$, for each $u \in V$ and each $\mu \in \Params$ there exists a bounded linear functional $\partial_u \J(u, \mu)\in V'$, such that the Fr\'echet derivative of $\J$ w.r.t.~its first argument in any direction $v \in V$ is given by $\partial_u \J(u, \mu)[v]$. We refer to $\partial_u \J(u, \mu)$ simply as the derivative w.r.t.~$u$. 
If $\J$ is twice Fr\'echet differentiable w.r.t.~each $\mu \in \Params$, we denote its \emph{hessian} w.r.t.~its second argument by the operator $\HH_\mu \J: V \times \Params \to \R^{P \times P}$. 

We treat $a$, $l$, $j$ and $k$ in a similar manner, although, for notational compactness, we indicate their parameter-dependency differently.
For instance, interpreting the bilinear form $a$ as a map $a: V \times V \times \Params \to \R$, $\left(u, v, \mu\right) \mapsto a_\mu(u, v)$, we denote the Fr\'echet derivatives of $a$ w.r.t.~the first, second and third argument of said map in the direction of $w\in V$, $\nu \in \mathbb{R}^P$ by $\partial_u a_\mu(u, v)[w] \in \R$, $\partial_v a_\mu(u, v)[w] \in \R$ and $\partial_\mu a_\mu(u, v)\cdot \nu \in \R$, respectively.
Similarly, interpreting the linear functional $l$ as a map $l: V \times \Params \to \R$, $(v, \mu) \mapsto l_\mu(v)$, we denote the Fr\'echet derivatives of $l$ w.r.t.~the first and second argument of said map in the direction of $w \in V$, $\nu \in \mathbb{R}^P$ by $\partial_v l_\mu(v)[w] \in \R$ and $\partial_\mu l_\mu(v) \cdot \nu \in \R$, respectively. We omit the word Fr\'{e}chet when referring to the derivatives of $\J$, $a$, $l$, $j$ and $k$, in order to simplify the notation, unless it is strictly necessary to specify it.

We apply this notation for Fr\'echet  and partial derivatives for functionals and bilinear forms throughout this manuscript.
Note that we denote the derivatives w.r.t.~the symbol of the argument in the original definition of the functional or bilinear form, not w.r.t.~the symbol of the actual argument, i.e.~we use $\partial_u \J(u_\mu, \mu)$ for the derivative w.r.t.~the first argument, not $\partial_{u_\mu} \J(u_\mu, \mu)$ or $\partial_v a_\mu(u, p)$ for the derivative w.r.t.~the second argument, not $\partial_p a_\mu(u, p)$. Note also that, due to Assumption~\ref{asmpt:parameter_separable}, we can exchange the order of differentiation w.r.t.~$V$ and $\mathbb{R}^P$, i.e.~$\partial_u\big(\partial_\mu \J(u, \mu)\cdot\nu\big)[w] = \partial_\mu\big(\partial_u \J(u, \mu)[w]\big)\cdot\nu$. 

\begin{assumption}[Differentiability of $a$, $l$ and $\J$]
	\label{asmpt:differentiability}
	We assume $a_{\mu}$, $l_{\mu}$ and $\J$ to be twice Fr\'{e}chet differentiable w.r.t.~$\mu$. This obviously requires that all parameter-dependent coefficient functions in Assumption~{\rm{\ref{asmpt:parameter_separable}}} are twice differentiable as well.
\end{assumption}

\begin{remark}[Derivatives w.r.t.~$V$]
	\label{prop:gateaux_wrt_V}
	Due to the (bi-)linearity of $a$, $l$, $j$ and $k$, we can immediately compute their derivatives w.r.t.~arguments in $V$. For $u, v \in V$, $\mu \in \Params$, the derivatives of $a$, $l$ and $\J$ w.r.t.~arguments in $V$ in the direction of $w \in V$ are given, respectively, by
	\[
	\partial_u a_\mu(u, v)[w] = a_\mu(w, v),\hspace{1em}\partial_v a_\mu(u, v)[w] = a_\mu(u, w), \hspace{1em} \partial_v l_\mu(v)[w] = l_\mu(w), \hspace{1em}\partial_u \J(u, \mu)[w] = j_\mu(w) + 2 k_\mu(w, u).
	\]
\end{remark}
We compute the partial derivatives of $a$ and $l$ w.r.t.~the parameter by means of their separable decomposition.

\begin{remark}[Derivatives w.r.t.~$\Params$]
	\label{rem:gateaux_wrt_P}
	For $\mu \in \Params$, $u, v, \in V$ the derivatives of $a$ and $l$ w.r.t.~$\mu$ in the direction of $\nu \in \mathbb{R}^P$ are given by
	\begin{align}
	\partial_\mu a_\mu(u, v) \cdot \nu &= \sum_{\xi = 1}^{\Xi^a} \big(\partial_\mu \theta_\xi^a(\mu) \cdot \nu\big)\, a_\xi(u, v)&&\text{and}&&&
	\partial_\mu l_\mu(v) \cdot \nu &= \sum_{\xi = 1}^{\Xi^l} \big(\partial_\mu \theta_\xi^l(\mu) \cdot \nu\big)\, l_\xi(v),
	\notag
	\end{align}
	respectively, if $u, v$ do not depend on $\mu$.
	We also introduce the following shorthand notation for the derivative of functionals and bilinear forms w.r.t.~the parameter in the direction of $\nu \in \mathbb{R}^P$, e.g.~for $\mu \in \Params$ we introduce
	\begin{align}
	\partial_\mu l_\mu &\cdot \nu \in V'&&& v \mapsto \big(\partial_\mu l_\mu \cdot \nu\big)(v) &:= \partial_\mu l_\mu(v)\cdot \nu&&\text{and}
	\notag\\
	\partial_\mu a_\mu &\cdot \nu \in V \times V \to \R&&& u, v \mapsto \big(\partial_\mu a_\mu \cdot \nu\big)(u, v) &:= \partial_\mu a_\mu(u, v)\cdot \nu,
	\notag
	\end{align}
	and note that $\partial_\mu l_\mu$ and $\partial_\mu a_\mu$ are continuous and separable w.r.t.~the parameter, owing to Assumption~{\rm\ref{asmpt:parameter_separable}}.
\end{remark}

The bilinear form $a_\mu(\cdot\,,\cdot)$ is continuous and coercive for all $\mu\in\Params$. Thus we can define the bounded solution map $\mathcal{S}:\Params \to V$, $\mu \mapsto u_\mu:= \mathcal S(\mu)$, where $u_\mu$ is the unique solution to \eqref{P.state} for a given $\mu$. The Fr\'echet derivatives of $\mathcal{S}$ are a common tool for RB methods and optimization, e.g., for constructing Taylor RB spaces that consist of the primal solution as well as their sensitivities (see \cite{HAA2017}) or for deriving optimality conditions for \eqref{P} (see \cite{HPUU2009}).
\begin{proposition}[Fr\'{e}chet derivative of the solution map]
	\label{prop:solution_dmu_eta}
	Considering the solution map $\mathcal S:\Params \to V$, $\mu \mapsto u_\mu$, its Fr\'{e}chet derivative $d_{\nu} u_\mu \in V$ w.r.t.~a direction $\nu\in\mathbb{R}^P$ is the unique solution of
	\begin{align}\label{eq:primal_sens}
	a_\mu(d_{\nu}u_\mu, v) = \partial_\mu r_\mu^\pr(u_\mu)[v] \cdot \nu &&\text{for all } v \in V.
	\end{align}
\end{proposition}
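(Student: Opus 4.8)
The plan is to apply the implicit function theorem to the operator form of the state equation \eqref{P.state}. Define $F \colon \Params \times V \to V'$ by $F(\mu, u)[v] := a_\mu(u, v) - l_\mu(v)$ for all $v \in V$; then $F(\mu, u_\mu) = 0$ for every $\mu \in \Params$ by definition of the solution map $\mathcal S$. First I would check that $F$ is continuously Fréchet differentiable on $\Params \times V$: differentiability (indeed linearity) with respect to $u$ is immediate from the bilinearity of $a_\mu$, with $\big(\partial_u F(\mu, u)\, w\big)[v] = a_\mu(w, v)$; differentiability with respect to $\mu$ follows from Assumption~\ref{asmpt:differentiability} together with the parameter-separable representations of Assumption~\ref{asmpt:parameter_separable}, which express $\partial_\mu F(\mu, u) \cdot \nu$ as the finite sum $\sum_\xi \big(\partial_\mu\theta_\xi^a(\mu)\cdot\nu\big)\, a_\xi(u, \cdot) - \sum_\xi \big(\partial_\mu\theta_\xi^l(\mu)\cdot\nu\big)\, l_\xi$. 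Joint continuity of these partial derivatives is then inherited from the continuity of the coefficient functions $\theta_\xi^a, \theta_\xi^l$ and their derivatives and from the boundedness of the fixed components $a_\xi \in (V\times V)'$, $l_\xi \in V'$.

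Second, I would verify that the partial derivative $\partial_u F(\mu, u_\mu) \colon V \to V'$, $w \mapsto a_\mu(w, \cdot)$, is a topological isomorphism for each fixed $\mu$: this is exactly the Lax--Milgram theorem, using that $a_\mu$ is continuous and coercive for every $\mu \in \Params$. The implicit function theorem then yields that $\mathcal S$ is continuously Fréchet differentiable on $\Params$ and that its derivative in direction $\nu \in \mathbb{R}^P$ satisfies $\partial_u F(\mu, u_\mu)\, d_\nu u_\mu = - \partial_\mu F(\mu, u_\mu)\cdot\nu$ in $V'$, i.e.
\[
a_\mu(d_\nu u_\mu, v) = -\partial_\mu a_\mu(u_\mu, v)\cdot\nu + \partial_\mu l_\mu(v)\cdot\nu \qquad\text{for all } v \in V.
\]
Recalling the definition \eqref{eq:primal_residual} of the primal residual $r_\mu^\pr(u)[v] = l_\mu(v) - a_\mu(u, v)$ and differentiating it with respect to the explicit parameter dependence (with $u_\mu$ held fixed, in the sense of Remark~\ref{rem:gateaux_wrt_P}) gives $\partial_\mu r_\mu^\pr(u_\mu)[v]\cdot\nu = \partial_\mu l_\mu(v)\cdot\nu - \partial_\mu a_\mu(u_\mu, v)\cdot\nu$, which is precisely the right-hand side above; this establishes \eqref{eq:primal_sens}. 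Well-posedness of \eqref{eq:primal_sens} as a stand-alone equation is again Lax--Milgram, since $v \mapsto \partial_\mu r_\mu^\pr(u_\mu)[v]\cdot\nu$ is a bounded linear functional on $V$ by parameter-separability.

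Alternatively, one can argue directly with difference quotients: subtracting the state equations for $\mu + t\nu$ and $\mu$, splitting $a_{\mu+t\nu}(u_{\mu+t\nu}, v) - a_\mu(u_\mu, v) = a_{\mu+t\nu}(u_{\mu+t\nu} - u_\mu, v) + (a_{\mu+t\nu} - a_\mu)(u_\mu, v)$, dividing by $t$, and passing to the limit $t \to 0$; here one uses the Lipschitz continuity of $\mu \mapsto u_\mu$ (a standard consequence of coercivity and the locally Lipschitz, separable parameter dependence) to control the first term and the differentiability of the coefficient functions for the second, identifying the limit via uniqueness of the Lax--Milgram solution. In either approach the only point requiring a little care is upgrading directional/Gâteaux derivatives to a genuine Fréchet derivative; but since $\Params \subset \mathbb{R}^P$ is finite-dimensional, continuity of $\mu \mapsto d_\bullet u_\mu$ — itself obtained by applying Lax--Milgram to the difference of two sensitivity equations together with continuity of $\mu \mapsto u_\mu$ and of the coefficient functions — suffices for this, so I do not expect any real obstacle.
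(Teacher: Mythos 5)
Your proof is correct and takes essentially the same route as the paper, which gives no argument of its own but simply defers to \cite{HPUU2009}, where exactly this implicit-function-theorem argument (isomorphism of $\partial_u F(\mu,u_\mu)$ via Lax--Milgram, plus identification of $-\partial_\mu F(\mu,u_\mu)\cdot\nu$ with $\partial_\mu r_\mu^\pr(u_\mu)[\cdot]\cdot\nu$) is carried out. The details you supply, including the well-posedness of \eqref{eq:primal_sens} as a stand-alone equation and the upgrade from directional to Fr\'echet differentiability, are precisely the ones the citation covers, so nothing is missing.
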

\begin{proof}
	We refer to \cite{HPUU2009} for the proof of this result.
\end{proof}

\subsection{Optimal solution and optimality conditions}
\label{sec:first_order_optimality_conditions}
In this section, we discuss the existence of an optimal solution for problem \eqref{P}. Then, we characterize a locally optimal solution through first- and second-order optimality conditions. Throughout the paper, a bar indicates optimality.
\begin{theorem}[Existence of an optimal solution]
	\label{Thm:existence_optimalsolution}
	Problem \eqref{P} admits an optimal solution pair $(\bar u, \bar \mu)\in V\times\Params$, where $\bar u:= u_{\bar \mu}$ is the solution of \eqref{P.state} for the parameter $\bar\mu$.
\end{theorem}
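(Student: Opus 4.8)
The plan is to use the classical \emph{reduce-then-minimize} strategy: exploit the unique solvability of the state equation \eqref{P.state} to eliminate the state variable, and then apply the Weierstrass extreme value theorem on the compact parameter set $\Params$.

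First I would introduce the reduced cost functional $\hat{\J}: \Params \to \R$, $\hat{\J}(\mu) := \J(\mathcal{S}(\mu), \mu)$, which is well defined because, by coercivity and continuity of $a_\mu$ together with the Lax--Milgram theorem, the solution map $\mathcal{S}: \Params \to V$, $\mu \mapsto u_\mu$, is single-valued and bounded. A minimizer of $\hat{\J}$ over $\Params$, paired with its associated state, is an optimal solution pair of \eqref{P}, and conversely; hence it suffices to show that $\hat{\J}$ attains its minimum on $\Params$, and then set $\bar u := \mathcal{S}(\bar\mu) = u_{\bar\mu}$.

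Second I would establish continuity of $\mathcal{S}$ on $\Params$. Given $\mu, \mu' \in \Params$, subtracting the state equations for $u_\mu$ and $u_{\mu'}$ and testing with $v = u_\mu - u_{\mu'}$, coercivity yields an estimate of the form $\|u_\mu - u_{\mu'}\| \le \underline{\alpha}^{-1}\bigl(\|l_\mu - l_{\mu'}\|_{V'} + \|a_\mu - a_{\mu'}\|\,\|u_{\mu'}\|\bigr)$, where $\|u_{\mu'}\|$ is bounded uniformly over $\Params$ by $\underline{\alpha}^{-1}\sup_{\mu'\in\Params}\|l_{\mu'}\|_{V'}$, and $\underline{\alpha} := \inf_{\mu\in\Params}\alpha(\mu) > 0$ exists by compactness of $\Params$ and continuity of $\mu\mapsto a_\mu$. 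By Assumption~\ref{asmpt:parameter_separable} together with Assumption~\ref{asmpt:differentiability} (which in particular makes the scalar coefficient functions $\theta_\xi^a, \theta_\xi^l$ continuous), the right-hand side tends to $0$ as $\mu' \to \mu$, so $\mathcal{S}$ is continuous. Since $\Theta$, $j_\mu$ and $k_\mu$ likewise depend continuously on $\mu$ by parameter-separability, and $\J$ is continuous in $(u,\mu)$ jointly, the composition $\hat{\J}$ is continuous on $\Params$. Then compactness of $\Params$ and Weierstrass deliver $\bar\mu \in \Params$ with $\hat{\J}(\bar\mu) = \min_{\mu\in\Params}\hat{\J}(\mu)$, completing the argument.

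The only mildly delicate point is this continuity/uniform-boundedness of the solution map: one must pass from continuity of the scalar parameter functionals to norm-continuity of the forms $l_\mu$, $a_\mu$ via the separable decomposition, and invoke the uniform coercivity constant $\underline{\alpha}$. Everything else is a routine application of Lax--Milgram and compactness. Note that no convexity of $\J$ is required for existence (the bilinear form $k_\mu$ is only assumed symmetric, not positive) — convexity would be relevant only for uniqueness of the minimizer, which is not claimed here.
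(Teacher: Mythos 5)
Your proof is correct, but it takes a different route from the paper: the paper simply verifies that the data of \eqref{P} satisfy Assumption~1.44 of Hinze--Pinnau--Ulbrich--Ulbrich and cites their Theorem~1.45, whose proof is the direct method of the calculus of variations (minimizing sequence, weak sequential compactness of the admissible set in a reflexive space, weak lower semicontinuity of $\J$). You instead exploit the fact that here the parameter set $\Params\subset\R^P$ is compact in the \emph{norm} topology, reduce to $\Jhat=\J(\mathcal{S}(\cdot),\cdot)$, prove strong continuity of $\mathcal{S}$ and hence of $\Jhat$, and apply Weierstrass. The delicate points you flag are exactly the right ones, and they go through: parameter-separability plus (twice-)differentiability of the coefficient functions gives norm-continuity of $\mu\mapsto a_\mu$ and $\mu\mapsto l_\mu$, the estimate $|\alpha(\mu)-\alpha(\mu')|\le\|a_\mu-a_{\mu'}\|$ gives continuity of the coercivity constant and hence a uniform $\underline{\alpha}>0$ on the compact $\Params$, and the a priori bound $\|u_\mu\|\le\underline{\alpha}^{-1}\sup_{\mu}\|l_\mu\|_{V'}$ closes the perturbation argument. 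What your approach buys is a self-contained, elementary proof that avoids weak topologies and lower-semicontinuity arguments entirely — entirely appropriate since the control variable is finite-dimensional; what the paper's approach buys is brevity and placement within a general framework that would also cover infinite-dimensional admissible sets. Your closing remark that no convexity of $k_\mu$ is needed for existence (only for uniqueness, which is not claimed) is also consistent with the paper's later remark that $\Jhat$ is in general non-convex.
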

\begin{proof}
	Note that the quantities involved in problem \eqref{P} satisfies Assumption~1.44 in \cite{HPUU2009}. Thus the existence follows from \cite[Theorem~1.45]{HPUU2009}.
\end{proof}
Let us introduce the reduced cost functional $\Jhat: \Params\mapsto \mathbb{R},\,\mu\mapsto \Jhat(\mu) := \J(u_\mu, \mu)= \J( \mathcal S(\mu),\mu)$. Then problem \eqref{P} is equivalent to the so-called reduced problem
\begin{align}
\min_{\mu \in \Params} \Jhat(\mu).
\tag{$\hat{\textnormal{P}}$}\label{Phat}
\end{align}
\begin{remark} \hfill
	\begin{enumerate}
		\item Since $r^\pr_\mu(u_\mu)[p] = 0$ for any $p\in V$, it follows that $\Jhat(\mu) = \mathcal L(u_\mu,\mu,p)$ for any $p\in V$.
		\item The cost functional $\Jhat$ is in general non-convex, thus the existence of a unique minimum for $\Jhat$ (and thus of $\J$) can not be guaranteed.
                \item Let $(\bar u, \bar \mu) \in V \times \Params$ be a local optimal solution to \eqref{P} with $\bar u := u_{\bar \mu}$ the solution of the primal equation \eqref{P.state} for the parameter $\bar\mu$. Then the following constraint qualification holds true: For any $f\in V'$ there exists a pair $(u,\mu) \in V\times \R^P$ solving
		\begin{align*}
                a_{\bar \mu}(u, v) - \partial_\mu r_{\bar \mu}^\pr(\bar u)[v] \cdot \mu = f(v) &&\text{for all } v \in V.
		\end{align*}
		\item Theorem~{\rm{\ref{Thm:existence_optimalsolution}}} does not provide any solution method.
	\end{enumerate}
	
\end{remark}
One can derive first-order necessary optimality conditions in order to compute candidates for a local optimal solution of \eqref{P}. We refer to \cite[Cor. 1.3]{HPUU2009} for a proof of the following result:
\begin{proposition}[First-order necessary optimality conditions for \eqref{P}]
	\label{prop:first_order_opt_cond}
	Let $(\bar u, \bar \mu) \in V \times \Params$ be a local optimal solution to \eqref{P}. Moreover, let Assumption~{\rm{\ref{asmpt:differentiability}}} hold true. Then there exists a unique Lagrange multiplier $\bar p\in V$ such that the following first-order necessary optimality conditions hold:
	\begin{subequations}
		\label{eq:optimality_conditions}
		\begin{align}
		r_{\bar \mu}^\pr(\bar u)[v] &= 0 &&\text{for all } v \in V,
		\label{eq:optimality_conditions:u}\\
		\partial_u \J(\bar u,\bar \mu)[v] - a_{\bar\mu}(v,\bar p) &= 0 &&\text{for all } v \in V,
		\label{eq:optimality_conditions:p}\\
		(\nabla_\mu \J(\bar u,\bar \mu)+\nabla_{\mu} r^\pr_{\bar\mu}(\bar u)[\bar p]) \cdot (\nu-\bar \mu) &\geq 0 &&\text{for all } \nu \in \Params. 
		\label{eq:optimality_conditions:mu}
		\end{align}	
	\end{subequations}
\end{proposition}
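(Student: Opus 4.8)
The plan is to reduce everything to the formulation~\eqref{Phat}, turn local optimality into a variational inequality on the convex set $\Params$, and then identify the reduced gradient via a Lagrangian (adjoint) argument. First I would observe that, since $(\bar u,\bar\mu)$ is a local optimal solution of~\eqref{P} with $\bar u = u_{\bar\mu} = \mathcal S(\bar\mu)$, the parameter $\bar\mu$ is a local minimizer of the reduced cost $\Jhat$ over $\Params$. By Proposition~\ref{prop:solution_dmu_eta} the solution map $\mathcal S$ is Fr\'echet differentiable, and combined with Assumption~\ref{asmpt:differentiability} this makes $\Jhat = \J(\mathcal S(\cdot),\cdot)$ differentiable on $\Params$. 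As $\Params$ is convex, for every $\nu\in\Params$ the whole segment $\bar\mu + t(\nu-\bar\mu)$, $t\in[0,1]$, lies in $\Params$; evaluating local optimality along this segment and letting $t\downarrow 0$ gives the variational inequality
\[
\partial_\mu\Jhat(\bar\mu)\cdot(\nu-\bar\mu) \geq 0 \qquad\text{for all }\nu\in\Params .
\]
Equation~\eqref{eq:optimality_conditions:u} then holds by the very definition of $\bar u$, and it only remains to rewrite $\partial_\mu\Jhat(\bar\mu)$ in the form appearing in~\eqref{eq:optimality_conditions:mu}, which is where the multiplier $\bar p$ enters.

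For the reduced gradient I would use that, as noted in the remark following~\eqref{Phat}, $r_\mu^\pr(u_\mu)[p]=0$ for every $p\in V$, so $\Jhat(\mu) = \LL(u_\mu,\mu,p) = \J(u_\mu,\mu) + r_\mu^\pr(u_\mu)[p]$ for an arbitrary, fixed $p\in V$. Differentiating in a direction $\nu\in\R^P$ by the chain rule, using Proposition~\ref{prop:solution_dmu_eta} for $d_\nu u_\mu$, Remark~\ref{prop:gateaux_wrt_V} for $\partial_u\big(r_\mu^\pr(u)[p]\big)[v] = -a_\mu(v,p)$, and Remark~\ref{rem:gateaux_wrt_P} for the parameter derivatives, gives
\[
\partial_\mu\Jhat(\mu)\cdot\nu = \Big(\partial_u\J(u_\mu,\mu)[d_\nu u_\mu] - a_\mu(d_\nu u_\mu,p)\Big) + \Big(\partial_\mu\J(u_\mu,\mu)\cdot\nu + \partial_\mu r_\mu^\pr(u_\mu)[p]\cdot\nu\Big).
\]
The point is to choose $p$ so that the first bracket vanishes for every test direction, i.e.\ $p=\bar p$ with $\partial_u\J(\bar u,\bar\mu)[v] - a_{\bar\mu}(v,\bar p) = 0$ for all $v\in V$; this is precisely the adjoint equation~\eqref{eq:optimality_conditions:p}.

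Well-posedness of the adjoint equation is then immediate: $a_{\bar\mu}$ is continuous, coercive and symmetric, and $v\mapsto\partial_u\J(\bar u,\bar\mu)[v] = j_{\bar\mu}(v)+2k_{\bar\mu}(v,\bar u)$ is a bounded linear functional on $V$, so Lax--Milgram (here even Riesz representation) yields a unique $\bar p\in V$ solving~\eqref{eq:optimality_conditions:p}. With $p=\bar p$ and $\mu=\bar\mu$ the first bracket above vanishes (in particular for $v = d_\nu u_{\bar\mu}$), leaving
\[
\partial_\mu\Jhat(\bar\mu)\cdot\nu = \partial_\mu\J(\bar u,\bar\mu)\cdot\nu + \partial_\mu r_{\bar\mu}^\pr(\bar u)[\bar p]\cdot\nu = \big(\nabla_\mu\J(\bar u,\bar\mu) + \nabla_\mu r_{\bar\mu}^\pr(\bar u)[\bar p]\big)\cdot\nu ,
\]
and substituting $\nu-\bar\mu$ into the variational inequality produces~\eqref{eq:optimality_conditions:mu}.

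The only genuinely delicate step I anticipate is the rigorous chain rule for $\Jhat$: one must control the remainder in the expansion $\mathcal S(\bar\mu+t\nu) = \bar u + t\,d_\nu u_{\bar\mu} + o(t)$ and pass it through $\J$, which relies on the differentiability of $\mathcal S$ from Proposition~\ref{prop:solution_dmu_eta} and on the local boundedness of $d_{(\cdot)}u_{(\cdot)}$. This is exactly what is supplied by the cited result \cite[Cor.~1.3]{HPUU2009}; there the constraint qualification stated in the remark preceding this proposition (surjectivity of the linearized state operator) secures existence and uniqueness of the multiplier in the equivalent all-at-once KKT formulation, whereas in the reduced formulation used here that qualification holds automatically since $a_{\bar\mu}$ is coercive. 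An alternative, self-contained route would be to apply the general Karush--Kuhn--Tucker theorem for equality-constrained minimization in Banach spaces directly to~\eqref{P}, with~\eqref{P.state} as the constraint and $(\bar u,\bar p)$ as the primal--adjoint pair.
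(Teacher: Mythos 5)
Your argument is correct and is precisely the standard reduced-space/adjoint reasoning behind the citation the paper gives in place of a proof (\cite[Cor.~1.3]{HPUU2009}): reduce to the variational inequality for $\Jhat$ on the convex set $\Params$, represent $\nabla_\mu\Jhat(\bar\mu)$ via the Lagrangian with the multiplier $\bar p$ obtained uniquely from Lax--Milgram applied to the adjoint equation, and substitute. The paper itself offers no further detail, so there is nothing to contrast; your write-up simply makes explicit what the reference supplies, including the correct observation that no separate constraint qualification is needed here because $a_{\bar\mu}$ is coercive.
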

Note that \eqref{eq:optimality_conditions:u} resembles the state equation \eqref{P.state}. From \eqref{eq:optimality_conditions:p} we deduce the \emph{adjoint -- or dual -- equation} with unique solution $p_{\mu} \in V$ for a fixed $\mu \in \Params$, i.e.
\begin{align}
a_\mu(v, p_\mu) = \partial_u \J(u_\mu, \mu)[v]
= j_\mu(v) + 2 k_\mu(v, u_\mu)&&\text{for all } v \in V,
\label{eq:dual_solution}
\end{align}
given the solution $u_\mu \in V$ to the state equation \eqref{P.state}. From \eqref{eq:optimality_conditions:p} we observe that the variable $\bar p$ of the optimal triple solves the dual equation \eqref{eq:dual_solution} for $\bar \mu$. Similarly to the primal solution, we can consider the dual solution map $\mathcal A:\Params \to V$, $\mu \mapsto \mathcal A(\mu) := p_\mu$, where $p_\mu$ is the solution of \eqref{eq:dual_solution} for the parameter $\mu$. In particular, $\bar p = p_{\bar \mu}$. For given $u, p \in V$, we also introduce the dual residual $r_\mu^\du(u, p) \in V'$ associated with \eqref{eq:dual_solution} by
\begin{align}
r_\mu^\du(u, p)[v] := j_\mu(v) + 2k_\mu(v, u) - a_\mu(v, p)&&\text{for all }v \in V.
\label{eq:dual_residual}
\end{align}
In addition, from the dual equation \eqref{eq:dual_solution}, we obtain the following formulation for the dual sensitivities.
\begin{proposition}[Fr\'echet derivative of the dual solution map]
	\label{prop:dual_solution_dmu_eta}
	Considering the dual solution map $\mathcal A:\Params \to V$, $\mu \mapsto p_\mu$, its directional derivative $d_{\eta} p_\mu\in V$ w.r.t.~a direction $\eta \in \Params$ is the solution of
	\begin{equation} \label{eq:dual_sens}
	\begin{split}
	a_\mu(q, d_{\eta} p_\mu) &= -\partial_\mu a_\mu(q, p_\mu)\cdot \eta + d_\mu \partial_u \J(u_\mu, \mu)[q] \cdot \eta
	= \partial_\mu r_\mu^\du(u_\mu, p_\mu)[q] \cdot \eta + 2 k_\mu(q, d_{\eta}u_\mu)
	\end{split}
	\end{equation}
	for all $q \in V$, where the latter equality holds for quadratic $\J$ as in \eqref{P.argmin}.
\end{proposition}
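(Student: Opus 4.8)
The plan is to obtain \eqref{eq:dual_sens} by differentiating the dual equation \eqref{eq:dual_solution} with respect to $\mu$ in a fixed direction $\eta\in\R^P$, keeping the test function independent of $\mu$, and then rewriting the outcome with the help of the primal sensitivity \eqref{eq:primal_sens}.

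First I would justify that the dual solution map $\mathcal A$ is differentiable. To this end, consider $F\colon\Params\times V\to V'$, $F(\mu,p)[q]:=a_\mu(q,p)-\partial_u\J(u_\mu,\mu)[q]$, whose zero set is, by \eqref{eq:dual_solution}, exactly the graph of $\mathcal A$. Owing to Assumption~\ref{asmpt:parameter_separable}, Assumption~\ref{asmpt:differentiability}, the explicit $V$-derivatives collected in Remark~\ref{prop:gateaux_wrt_V}, and the differentiability of $\mu\mapsto u_\mu$ established in Proposition~\ref{prop:solution_dmu_eta}, the map $F$ is continuously Fréchet differentiable; its partial derivative in $p$ is $w\mapsto a_\mu(\cdot\,,w)$, which by coercivity of $a_\mu$ is a topological isomorphism $V\to V'$. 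The implicit function theorem then shows that $\mathcal A$ is differentiable and that $d_\eta p_\mu\in V$ is characterised by $\partial_p F(\mu,p_\mu)[d_\eta p_\mu]=-\big(d_\mu F(\mu,p_\mu)\big)\cdot\eta$, i.e.\ by $a_\mu(q,d_\eta p_\mu)=-\partial_\mu a_\mu(q,p_\mu)\cdot\eta+d_\mu\big(\partial_u\J(u_\mu,\mu)[q]\big)\cdot\eta$ for all $q\in V$, where the total derivative $d_\mu\big(\partial_u\J(u_\mu,\mu)[q]\big)\cdot\eta$ collects both the explicit $\mu$-dependence of $\J$ and the dependence through $u_\mu$. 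This is precisely the first identity in \eqref{eq:dual_sens}.

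Second, I would specialise to the quadratic $\J$ of \eqref{P.argmin}. By Remark~\ref{prop:gateaux_wrt_V} we have $\partial_u\J(u,\mu)[q]=j_\mu(q)+2k_\mu(q,u)$, so the chain rule (using also the interchange of $V$- and $\R^P$-differentiation permitted under Assumption~\ref{asmpt:parameter_separable}) gives $d_\mu\big(\partial_u\J(u_\mu,\mu)[q]\big)\cdot\eta=\partial_\mu j_\mu(q)\cdot\eta+2\,\partial_\mu k_\mu(q,u_\mu)\cdot\eta+2k_\mu(q,d_\eta u_\mu)$ with $d_\eta u_\mu$ the primal sensitivity from \eqref{eq:primal_sens}. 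Substituting this into the first identity and collecting the three terms $-\partial_\mu a_\mu(q,p_\mu)\cdot\eta+\partial_\mu j_\mu(q)\cdot\eta+2\partial_\mu k_\mu(q,u_\mu)\cdot\eta$, I would recognise them, by the definition \eqref{eq:dual_residual} of the dual residual (differentiating only its explicit $\mu$-dependence), as $\partial_\mu r_\mu^\du(u_\mu,p_\mu)[q]\cdot\eta$, which gives the second equality. Finally, for well-posedness I note that for each fixed $\eta$ the right-hand side of \eqref{eq:dual_sens} is a bounded linear functional of $q$, so coercivity of $a_\mu$ together with the Lax--Milgram theorem confirms that $d_\eta p_\mu$ is indeed the unique solution.

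I expect the only genuinely delicate step to be the differentiability of $\mathcal A$ together with the correct chain-rule bookkeeping: one must keep apart the \enquote{explicit} $\mu$-dependence entering through the forms $a$, $j$, $k$ and the \enquote{implicit} $\mu$-dependence entering through $u_\mu$, and verify the isomorphism property needed for the implicit function theorem. After that, both formulas in \eqref{eq:dual_sens} follow by expanding $\partial_\mu a_\mu$, $\partial_\mu j_\mu$, $\partial_\mu k_\mu$ and re-assembling the terms.
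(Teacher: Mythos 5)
Your proposal is correct. The paper does not spell out a proof of this proposition at all --- it only notes well-posedness of $\mathcal A$ via coercivity and refers to \cite{HPUU2009} for the rest --- and your argument (implicit function theorem for differentiability of $\mathcal A$, total differentiation of \eqref{eq:dual_solution} with the test function held fixed, then the chain rule separating the explicit $\mu$-dependence from the dependence through $u_\mu$ to reassemble $\partial_\mu r_\mu^\du$ plus the $2k_\mu(q,d_\eta u_\mu)$ term) is exactly the standard route taken in that reference, with the bookkeeping done correctly.
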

\begin{proof}
	Note that $\mathcal A$ is well defined because the bilinear form $a_{\mu}(\cdot \, , \cdot)$ is continuous and coercive. For a proof of the other claims we refer to \cite{HPUU2009}, for instance.
\end{proof}
Furthermore, we can compute first-order derivatives of $\Jhat$.
\begin{proposition}[Gradient of $\Jhat$]
	\label{prop:grad_Jhat}
	For given $\mu\in\Params$, the gradient of $\Jhat$, $\nabla_\mu\Jhat: \Params \to \R^P$, is given by
	\begin{align*}
	\nabla_{\mu}\Jhat(\mu) &= \nabla_{\mu}\J(u_{\mu}, \mu)+\nabla_{\mu}r_\mu^\pr(u_{\mu})[p_{\mu}]= \nabla_\mu \Theta(\mu) + \nabla_\mu j_\mu(u_\mu)
	+  \nabla_\mu k_\mu(u_\mu, u_\mu) + \nabla_\mu l_\mu(p_\mu) - \nabla_\mu a_\mu(u_\mu, p_\mu).
	\end{align*}
\end{proposition}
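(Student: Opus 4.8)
\emph{Proof idea.}
The plan is to differentiate $\Jhat = \J(\mathcal S(\cdot),\cdot)$ by the chain rule and to exploit the Lagrangian reformulation $\Jhat(\mu) = \mathcal{L}(u_\mu,\mu,p)$, valid for every fixed $p \in V$ because $r_\mu^\pr(u_\mu)[p] = 0$, in order to cancel the primal sensitivities $d_\nu u_\mu$ by a suitable choice of $p$.

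First I would settle differentiability: $\mathcal S$ is Fr\'echet differentiable by Proposition~\ref{prop:solution_dmu_eta}, and $\J$ is differentiable w.r.t.\ both of its arguments by Assumption~\ref{asmpt:differentiability} and Remark~\ref{prop:gateaux_wrt_V}; hence $\Jhat$ is Fr\'echet differentiable and, for a fixed $p \in V$ and a direction $\nu \in \mathbb{R}^P$,
\[
d_\nu \Jhat(\mu)
= \partial_u \mathcal{L}(u_\mu,\mu,p)[d_\nu u_\mu] + \partial_\mu \mathcal{L}(u_\mu,\mu,p)\cdot\nu ,
\]
using $\Jhat(\mu) = \mathcal{L}(u_\mu,\mu,p)$ and the chain rule on the right-hand side (with $p$ held fixed). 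Since this holds for every $p\in V$, I may now take $p = p_\mu$, the dual solution of \eqref{eq:dual_solution}, treated as a constant element of $V$. From $\mathcal{L}(u,\mu,p) = \J(u,\mu) + r_\mu^\pr(u)[p]$ and Remark~\ref{prop:gateaux_wrt_V} we get $\partial_u \mathcal{L}(u_\mu,\mu,p_\mu)[w] = \partial_u\J(u_\mu,\mu)[w] - a_\mu(w,p_\mu)$ for all $w\in V$, which vanishes for $w = d_\nu u_\mu$ precisely because $p_\mu$ solves \eqref{eq:dual_solution}. Therefore the sensitivity term disappears and only the explicit $\mu$-derivative of $\mathcal{L}$ remains:
\[
d_\nu \Jhat(\mu) = \partial_\mu \mathcal{L}(u_\mu,\mu,p_\mu)\cdot\nu
= \partial_\mu \J(u_\mu,\mu)\cdot\nu + \partial_\mu r_\mu^\pr(u_\mu)[p_\mu]\cdot\nu .
\]
Since $\nu$ is arbitrary, assembling the components $d_{\mu_i}$ yields the first claimed identity $\nabla_\mu \Jhat(\mu) = \nabla_\mu \J(u_\mu,\mu) + \nabla_\mu r_\mu^\pr(u_\mu)[p_\mu]$, with the convention that $\nabla_\mu$ on the right differentiates only the explicit parameter dependence, freezing $u_\mu$ and $p_\mu$.

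For the second identity I would substitute $\J(u,\mu) = \Theta(\mu) + j_\mu(u) + k_\mu(u,u)$ and $r_\mu^\pr(u)[p] = l_\mu(p) - a_\mu(u,p)$ into the previous line and differentiate each summand w.r.t.\ the parameter with $u = u_\mu$, $p = p_\mu$ held fixed, invoking Remark~\ref{rem:gateaux_wrt_P} for the separable terms $l$ and $a$. The only point demanding care is this bookkeeping — keeping the total derivative $\nabla_\mu\Jhat$ apart from the partial (explicit) derivatives on the right — rather than any analytical subtlety; once Proposition~\ref{prop:solution_dmu_eta} supplies differentiability of $\mathcal S$, the chain-rule step is routine. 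An equivalent, Lagrangian-free route — testing the primal sensitivity equation \eqref{eq:primal_sens} with $v = p_\mu$ and then using \eqref{eq:dual_solution} to rewrite $\partial_u\J(u_\mu,\mu)[d_\nu u_\mu] = a_\mu(d_\nu u_\mu,p_\mu) = \partial_\mu r_\mu^\pr(u_\mu)[p_\mu]\cdot\nu$ — gives the same formula.
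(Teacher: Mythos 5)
Your argument is correct and is precisely the standard adjoint/Lagrangian calculus that the paper's proof delegates to \cite{HPUU2009}: differentiate $\Jhat(\mu)=\mathcal{L}(u_\mu,\mu,p)$ by the chain rule, choose $p=p_\mu$ so that $\partial_u\mathcal{L}(u_\mu,\mu,p_\mu)=0$ kills the sensitivity term, and read off the explicit $\mu$-derivative. The bookkeeping you flag (freezing $u_\mu,p_\mu$ on the right-hand side) is handled correctly, so nothing further is needed.
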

\begin{proof}
	This follows from \eqref{eq:primal_residual}, \eqref{eq:primal_sens}, \eqref{eq:dual_solution} and \eqref{P.argmin}, cf.~\cite{HPUU2009}.
\end{proof}
\begin{remark}
	The proof of Proposition~{\rm{\ref{prop:grad_Jhat}}} relies on the fact that both $u_\mu$ and $p_\mu$ belong to the same space $V$; cf.~\emph{\cite{HPUU2009}}. In particular, for any $\mu\in\Params$, we have $\nabla_\mu \Jhat(\mu) = \nabla_\mu \mathcal L(u_\mu,\mu,p_\mu)$.
\end{remark}
For $\bar\mu$ satisfying the first-order necessary optimality conditions, we have that $\bar \mu$ is a stationary point of the cost functional $\Jhat$. Thus, $\bar \mu$ can be either a local minimum, a saddle point or a local maximum of the cost functional $\Jhat$ (and obviously the same relationship occurs between $(\bar u,\bar \mu)$ and $\J$). We thus consider second-order sufficient optimality conditions in order to characterize local minima of the functional $\Jhat$, requiring its hessian.
\begin{proposition}[Hessian of $\Jhat$]
	\label{prop:hessian_Jhat}
	The hessian of $\Jhat$, $\HHhat_\mu := \HH_\mu \Jhat: \Params \to \R^{P \times P}$, is determined by its application to a direction $\nu\in \mathbb{R}^P$, given by
	\begin{align}
	\HHhat_\mu(\mu) \cdot \nu
	= \nabla_\mu\Big(&\partial_u \J(u_\mu, \mu)[d_{\nu}u_\mu] + l_\mu(d_{\nu}p_\mu) - a_\mu(d_{\nu}u_\mu, p_\mu) - a_\mu(u_\mu, d_{\nu}p_\mu)
	\notag\\
	&+\big(\partial_\mu \J(u_\mu, \mu)+ \partial_\mu l_\mu(p_\mu)- \partial_\mu a_\mu(u_\mu, p_\mu)\big)\cdot \nu\Big),
	\notag
	\end{align}
	where $u_\mu, p_\mu \in V$ denote the primal and dual solutions, respectively. 
	For a quadratic $\J$ as in \eqref{P.argmin} the above formula simplifies to
	\begin{align}
	\HHhat_\mu(\mu) \cdot \nu
	= \nabla_\mu\Big(&j_\mu(d_{\nu}u_\mu) + 2k_\mu(d_{\nu}u_\mu, u_\mu) + l_\mu(d_{\nu}p_\mu) - a_\mu(d_{\nu}u_\mu, p_\mu) - a_\mu(u_\mu, d_{\nu}p_\mu)
	\notag\\
	&+\big(\partial_\mu \J(u_\mu, \mu)+ \partial_\mu l_\mu(p_\mu)- \partial_\mu a_\mu(u_\mu, p_\mu)\big)\cdot \nu\Big).
	\notag
	\end{align}
\end{proposition}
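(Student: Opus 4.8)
The plan is to recognise the bracketed expression on the right-hand side of the first formula as the scalar function $\mu\mapsto\nabla_\mu\Jhat(\mu)\cdot\nu$ (for a fixed direction $\nu\in\R^P$), and then to differentiate it once more, using that $\Jhat$ is twice continuously differentiable so that its gradient reproduces the claimed Hessian–vector product.

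First I would start from Proposition~\ref{prop:grad_Jhat}: for every $\mu\in\Params$ and $\nu\in\R^P$,
\[
\nabla_\mu\Jhat(\mu)\cdot\nu
= \big(\partial_\mu\J(u_\mu,\mu) + \partial_\mu l_\mu(p_\mu) - \partial_\mu a_\mu(u_\mu,p_\mu)\big)\cdot\nu,
\]
with $u_\mu$ solving~\eqref{P.state} and $p_\mu$ solving~\eqref{eq:dual_solution}. The crucial point is that two further terms can be added to this right-hand side without changing it, because each of them vanishes identically in $\mu$: since $u_\mu$ solves the state equation, $l_\mu(d_\nu p_\mu) - a_\mu(u_\mu,d_\nu p_\mu) = r_\mu^\pr(u_\mu)[d_\nu p_\mu] = 0$ (recall the primal residual~\eqref{eq:primal_residual}); and since $p_\mu$ solves the dual equation~\eqref{eq:dual_solution} tested with $v = d_\nu u_\mu\in V$, $\partial_u\J(u_\mu,\mu)[d_\nu u_\mu] - a_\mu(d_\nu u_\mu,p_\mu) = 0$. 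Adding these two vanishing quantities to the gradient formula above reproduces verbatim the bracketed expression in the statement; hence that bracket, as a function of $\mu$, equals $\nabla_\mu\Jhat(\mu)\cdot\nu$.

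It then remains to differentiate this identity. Under Assumption~\ref{asmpt:differentiability}, the state and dual equations have the differentiable solution maps $\mathcal S$ and $\mathcal A$ described in Propositions~\ref{prop:solution_dmu_eta} and~\ref{prop:dual_solution_dmu_eta}, so $\Jhat\in C^2(\Params)$ and its Hessian is well defined and symmetric. Therefore $\nabla_\mu\big(\nabla_\mu\Jhat(\mu)\cdot\nu\big) = \HHhat_\mu(\mu)\cdot\nu$, which together with the previous paragraph yields the first claimed formula. For the second formula one only substitutes $\partial_u\J(u_\mu,\mu)[d_\nu u_\mu] = j_\mu(d_\nu u_\mu) + 2k_\mu(d_\nu u_\mu,u_\mu)$ from Remark~\ref{prop:gateaux_wrt_V}.

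The computation is routine; the only step requiring care is the conceptual one of checking that the two added terms vanish \emph{for all} $\mu\in\Params$ rather than at a single parameter, which is exactly what permits differentiating the resulting identity a second time. As an alternative, more structural route, one may write the bracket as $\partial_u\LL(u_\mu,\mu,p_\mu)[d_\nu u_\mu] + \partial_\mu\LL(u_\mu,\mu,p_\mu)\cdot\nu + \partial_p\LL(u_\mu,\mu,p_\mu)[d_\nu p_\mu]$ for the Lagrangian $\LL(u,\mu,p) = \J(u,\mu) + r_\mu^\pr(u)[p]$, identify it via the chain rule with the directional derivative of $\mu\mapsto\LL(u_\mu,\mu,p_\mu) = \Jhat(\mu)$, and conclude as above; in this reading the dual sensitivity $d_\nu p_\mu$ enters explicitly through $\partial_p\LL$.
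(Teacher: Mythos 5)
Your argument is correct. The paper itself does not spell out a proof of the first formula but simply defers to \cite{HPUU2009}, and handles the quadratic case exactly as you do, via Remark~\ref{prop:gateaux_wrt_V}. Your reconstruction is the standard Lagrangian argument underlying that reference: you correctly identify the bracket as $\partial_u\LL[d_\nu u_\mu]+\partial_p\LL[d_\nu p_\mu]+\partial_\mu\LL\cdot\nu=\nabla_\mu\Jhat(\mu)\cdot\nu$ (the first two terms vanishing identically in $\mu$ by the state and adjoint equations tested with $d_\nu p_\mu$ and $d_\nu u_\mu$, respectively), and then differentiate once more, using twice continuous differentiability of $\Jhat$ so that $\nabla_\mu\big(\nabla_\mu\Jhat(\mu)\cdot\nu\big)=\HHhat_\mu(\mu)\cdot\nu$. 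This is a faithful, self-contained filling-in of the proof the paper outsources.
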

\begin{proof}
	See, e.g., \cite{HPUU2009} for the first part. The second one follows from Remark~\ref{prop:gateaux_wrt_V}.
\end{proof}
\begin{proposition}[Second-order sufficient optimality conditions]\label{prop:second_order}
	Let Assumption~{\rm{\ref{asmpt:differentiability}}} hold true. Suppose that $\bar \mu\in \Params$ satisfies the first-order necessary optimality conditions \eqref{eq:optimality_conditions}. If $\HHhat_\mu(\bar \mu)$ is positive definite on the \emph{critical cone} $\mathcal C(\bar\mu)$ at $\bar\mu\in\Params$, i.e., if $\nu \cdot(\HHhat_\mu(\bar \mu)\cdot \nu) > 0$ for all $\nu\in\mathcal C(\bar\mu)\setminus\{0\}$, with
	\begin{align*}
	\mathcal C(\bar\mu):= \big\{\nu\in\mathbb{R}^P\,\big|\, \exists \mu\in\Params,\,c_1>0: \nu = c_1(\mu-\bar \mu),\, \nabla_\mu \Jhat(\bar\mu)\cdot \nu = 0  \big\},
	\end{align*}
	then $\bar \mu$ is a strict local minimum of \eqref{Phat}.
\end{proposition}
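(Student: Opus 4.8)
The plan is to argue by contradiction using a second-order Taylor expansion of the reduced cost $\Jhat$ about $\bar\mu$. This is legitimate because, under Assumption~\ref{asmpt:differentiability} and the continuity and coercivity of $a_\mu$, the primal and dual solution maps $\mathcal S, \mathcal A$ are twice continuously differentiable in $\mu$ (differentiating the state and adjoint equations, cf.~Propositions~\ref{prop:solution_dmu_eta} and~\ref{prop:dual_solution_dmu_eta}), so that $\Jhat \in C^2$ on a neighbourhood of $\bar\mu$ with gradient and Hessian given by Propositions~\ref{prop:grad_Jhat} and~\ref{prop:hessian_Jhat}. Assume $\bar\mu$ is \emph{not} a strict local minimum of~\eqref{Phat}. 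Then there is a sequence $(\mu_k) \subset \Params$ with $\mu_k \neq \bar\mu$, $\mu_k \to \bar\mu$, and $\Jhat(\mu_k) \le \Jhat(\bar\mu)$ for all $k$. Set $t_k := \|\mu_k - \bar\mu\| \to 0^+$ and $d_k := (\mu_k - \bar\mu)/t_k$, so that $\|d_k\| = 1$; passing to a subsequence we may assume $d_k \to d$ with $\|d\| = 1$. Since $d_k = (1/t_k)(\mu_k - \bar\mu)$ with $\mu_k \in \Params$ and $1/t_k > 0$, each $d_k$ lies in the cone $K$ generated by $\Params - \bar\mu$; as $\Params$ is a box, $K$ is a polyhedral cone and hence closed, so $d \in K$ as well.

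Next I would extract the first-order information. By~\eqref{eq:optimality_conditions:mu} together with Proposition~\ref{prop:grad_Jhat} and $\bar u = u_{\bar\mu}$, $\bar p = p_{\bar\mu}$, one has $\nabla_\mu\Jhat(\bar\mu)\cdot(\mu - \bar\mu) \ge 0$ for all $\mu \in \Params$, hence $\nabla_\mu\Jhat(\bar\mu)\cdot d_k \ge 0$ for every $k$. On the other hand, the first-order expansion $\Jhat(\mu_k) - \Jhat(\bar\mu) = t_k\,\nabla_\mu\Jhat(\bar\mu)\cdot d_k + o(t_k) \le 0$, divided by $t_k$, forces $\nabla_\mu\Jhat(\bar\mu)\cdot d_k \to 0$, so $\nabla_\mu\Jhat(\bar\mu)\cdot d = 0$. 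Combined with $d \in K\setminus\{0\}$, this shows $d \in \mathcal C(\bar\mu)\setminus\{0\}$.

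Finally I would use the second-order term. Writing $\Jhat(\mu_k) - \Jhat(\bar\mu) = t_k\,\nabla_\mu\Jhat(\bar\mu)\cdot d_k + \tfrac{t_k^2}{2}\,d_k \cdot\big(\HHhat_\mu(\bar\mu)\cdot d_k\big) + o(t_k^2)$ and using both $\Jhat(\mu_k) - \Jhat(\bar\mu) \le 0$ and $\nabla_\mu\Jhat(\bar\mu)\cdot d_k \ge 0$, I obtain $\tfrac{t_k^2}{2}\,d_k\cdot\big(\HHhat_\mu(\bar\mu)\cdot d_k\big) + o(t_k^2) \le 0$; dividing by $t_k^2$ and passing to the limit yields $d \cdot\big(\HHhat_\mu(\bar\mu)\cdot d\big) \le 0$. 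Since $d \in \mathcal C(\bar\mu)\setminus\{0\}$, this contradicts the positive definiteness of $\HHhat_\mu(\bar\mu)$ on $\mathcal C(\bar\mu)$, and therefore $\bar\mu$ must be a strict local minimum of~\eqref{Phat}.

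I expect the main obstacle to be the two regularity inputs rather than the finite-dimensional argument itself: first, justifying that $\Jhat$ is genuinely $C^2$ near $\bar\mu$ (so that the Hessian formula of Proposition~\ref{prop:hessian_Jhat} and the Taylor expansions are valid), which rests on differentiating the state and adjoint equations and on coercivity of $a_\mu$; and second, the easy but not skippable fact that the cone $K$ generated by the box $\Params - \bar\mu$ is closed, so that the limiting direction $d$ actually belongs to the critical cone and not merely to its closure. The remaining estimates --- dividing the decrease inequality successively by $t_k$ and $t_k^2$ and passing to subsequential limits --- are routine.
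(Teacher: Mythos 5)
Your proof is correct: the paper does not prove this proposition itself but simply refers to \cite{CasTr15,Nocedal06}, and your contradiction argument --- normalizing $\mu_k-\bar\mu$, extracting a limit direction $d$ in the (closed, polyhedral) cone of feasible directions, showing $\nabla_\mu\Jhat(\bar\mu)\cdot d=0$ from the variational inequality and the first-order Taylor term, and then deriving $d\cdot(\HHhat_\mu(\bar\mu)\cdot d)\le 0$ from the second-order term --- is precisely the standard proof found in those references. The two regularity points you flag (that $\Jhat\in C^2$ near $\bar\mu$ under Assumption~\ref{asmpt:differentiability}, and that the feasible-direction cone of the box $\Params$ is already closed so $d$ lies in $\mathcal C(\bar\mu)$ itself) are exactly the right places to be careful, and both hold here.
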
 
\begin{proof}
	For this result we refer to \cite{CasTr15,Nocedal06}, for instance.
\end{proof}

\section{High dimensional discretization and model order reduction}
\label{sec:mor}

We first discretize the optimization problem \eqref{P} as well as the corresponding optimality conditions 
using the classical Ritz-Galerkin projection into a possibly high dimensional approximation space $V_h \subset V$, such as conforming Finite Elements. 
Note that we restrict ourselves to a conforming approximation for simplicity and that we do not further specify the choice of $V_h$, as neither impacts the analysis below.
Based on this idea, we then derive different ways for the ROM using the Reduced Basis method with possibly different reduced primal and dual 
state spaces.
Thus, the resulting ROM optimality system will in general not be equivalent to a Ritz-Galerkin projection of the FOM one onto a reduce space $V_\red \subset V_h$. For this reason, we will introduce a non-conforming dual-corrected (NCD-corrected) approach; cf.~Section~\ref{sec:ncd_approach}.

\subsection{FOM for the optimality system} 
\label{sec:problem_fom}
For the discretization of the optimization problem we assume that a finite-dimensional subspace $V_h \subset V$ is given and obtain the FOM for the optimality system of \eqref{P} by Ritz-Galerkin projection of equations \eqref{eq:optimality_conditions} onto $V_h$. 
In particular, we have for each $\mu \in \Params$ the solution $u_{h, \mu} \in V_h$ of the \emph{discrete primal equation}
\begin{align}
\bformd(u_{h, \mu}, v_h) = \lformd(v_h) &&\text{for all } v_h \in V_h,
\label{eq:state_h}
\end{align}
and hence $\resd^\pr(u_{h, \mu})[v_h] = 0$ for all $v_h \in V_h$, $\mu \in \Params$.
We also have for each $\mu \in \Params$ the solution $p_{h, \mu} \in V_h$ of the \emph{discrete dual equation}
\begin{align}
\bformd(v_h, p_{h, \mu}) = \partial_u \J(u_{h, \mu}, \mu)[v_h] = \jformd(v_h) + 2 \kformd(v_h, u_{h, \mu}) &&\text{for all } v_h \in V_h,
\label{eq:dual_solution_h}
\end{align}
and hence $\resd^\du(u_{h, \mu}, p_{h, \mu})[v_h] = 0$ for all $v_h \in V_h$, $\mu \in \Params$. Similarly, the \emph{discrete primal sensitivity equations} for solving for $d_{\nu} u_{h, \mu} \in V_h$ as well as
\emph{discrete dual sensitivity equations} for solving for $d_{\nu} p_{h, \mu} \in V_h$ at any direction $\nu \in \mathbb{R}^P$
follow analogously to Propositions \ref{prop:solution_dmu_eta} and \ref{prop:dual_solution_dmu_eta}.
Furthermore, $\Jhat$ is approximated by the \emph{discrete reduced functional}
\begin{align}
\Jhat_h(\mu) := \J(u_{h, \mu}, \mu) = \mathcal L(u_{h,\mu},\mu,p_h) && \text{for all } p_h\in V_h,
\label{eq:Jhat_h}
\end{align}
where $u_{h, \mu} \in V_h$ is the solution of \eqref{eq:state_h} and we formulate the discrete optimization problem 
\begin{align}
\min_{\mu \in \Params} \Jhat_h(\mu).
\tag{$\hat{\textnormal{P}}_h$}\label{Phat_h}
\end{align}
Further, $\bar\mu_h$ denotes a locally optimal solution to \eqref{Phat_h} satisfying the first- and second-order optimality conditions. 
\begin{remark}
	Since $u_{h,\mu}$ and $p_{h,\mu}$ belong to the same space $V_h$, Propositions~{\rm{\ref{prop:first_order_opt_cond}-{\rm\ref{prop:grad_Jhat}},{\rm\ref{prop:hessian_Jhat}}-\ref{prop:second_order}}} from Section~{\rm{\ref{sec:problem_fom}}} hold for the FOM as well, with all quantities replaced by their discrete counterparts.
\end{remark}
As usual in the context of RB methods, we eliminate the issue of ``truth'' by assuming that the high dimensional space $V_h$ is accurate enough to approximate the true solution.

\begin{assumption}[This is the ``truth'']
	\label{asmpt:truth}
	We assume that the primal discretization error $\|u_\mu - u_{h, \mu}\|$, the dual error $\|p_\mu - p_{h, \mu}\|$, 
	the primal sensitivity errors $\|d_{\mu_i} u_\mu - d_{\mu_i} u_{h, \mu}\|$ and the dual sensitivity errors $\|d_{\mu_i} p_\mu - d_{\mu_i} p_{h, \mu}\|$
	are negligible for all $\mu \in \Params$, $1 \leq i \leq P$.
\end{assumption}

To define suitable ROMs, in what follows, we assume that we have computed problem adapted RB spaces $V_\red^\pr, V_\red^\du \subset V_h$, 
the construction of which is detailed in Section~\ref{sec:construct_RB}. We stress here that $V_\red^\pr$ and $V_\red^\du$ might not coincide, this will imply further discussions of the RB approximation of the optimality system \eqref{eq:optimality_conditions}.

\subsection{ROM for the optimality system -- Standard approach}
\label{sec:standard_approach}

Given a RB space $V_\red^\pr \subset V_h$ of low dimension $n := \dim V_\red^\pr$ and dual RB space $V_\red^\du \subset V_h$ of low dimension $m := \dim V_\red^\du$, we obtain the RB approximation of state and adjoint equations as follows:
\begin{subequations}
	\label{eq:optimality_conditionsRB}
	\begin{itemize}
		\item RB approximation for \eqref{eq:optimality_conditions:u}: For each $\mu \in \Params$ the primal variable $u_{\red, \mu} \in V_\red^\pr$ 
		of the \emph{RB approximate primal equation} is defined through
		\begin{align}
		\bformd(u_{\red, \mu}, v_\red) = \lformd(v_\red) &\qquad \text{for all } v_\red \in V_\red^\pr.
		\label{eq:state_red}
		\end{align}
		\item RB approximation for \eqref{eq:optimality_conditions:p}: For each $\mu \in \Params$, $u_{\red, \mu} \in V_\red^\pr$ the dual/adjoint variable $p_{\red, \mu} \in V_\red^\du$ satisfies the \emph{RB approximate dual equation} through
		\begin{align}
		\bformd(q_\red, p_{\red, \mu}) = \partial_u \J(u_{\red, \mu}, \mu)[q_\red] = \jformd(q_\red) + 2 \kformd(q_\red, u_{\red, \mu}) &&\text{for all } q_\red \in V_\red^\du.
		\label{eq:dual_solution_red}
		\end{align}
	\end{itemize}
\end{subequations}
Analogously to Propostion~\ref{prop:solution_dmu_eta}, we define the \emph{RB solution map} $\mathcal{S}_\red:\Params \to V_\red^\pr$ by $\mu \mapsto u_{\red, \mu}$ and analogously to Propostion~\ref{prop:dual_solution_dmu_eta} the \emph{RB dual solution map} $\mathcal{A}_\red:\Params \to V_\red^\du$ by $\mu \mapsto p_{\red, \mu}$, where $u_{\red, \mu}$ and $p_{\red, \mu}$ denote the primal and dual reduced solutions of \eqref{eq:state_red} and \eqref{eq:dual_solution_red}, respectively.

To approximate \eqref{Phat_h}, we introduce the \emph{RB reduced functional} by
\begin{align}
\Jnoncor_\red(\mu) := \J(u_{\red, \mu}, \mu)=\J(\mathcal S_r(\mu), \mu),&&
\text{where $u_{\red, \mu} \in V_\red^\pr$ is the solution of \eqref{eq:state_red}}
\label{eq:Jhat_red}
\end{align}
instead of $\Jhat_h$ and the 
problem of finding a locally optimal solution $\bar \mu_\red$ of
\begin{align}
\min_{\mu \in \Params} \Jnoncor_\red(\mu).
\label{eq:Phat_red_uncorrected}
\end{align}
Now, a solution to the optimality system \eqref{eq:optimality_conditions} is approximated by the RB triple $(u_{\red,\bar\mu_\red},\bar\mu_\red,p_{\red,\bar\mu_\red})$.

As proposed in \cite{QGVW2017}, for computing an approximation of the gradient of $\Jnoncor_\red$, the gradient from Propostion~\ref{prop:grad_Jhat} can be utilized by replacing $u_{\mu}$ and $p_{\mu}$ with their RB counterparts. However, it can not be guaranteed in general that the computed gradient is the actual gradient of $\Jnoncor_\red$, if $V^\pr_\red$ and $V^\du_\red$ are chosen to be different. To see this, we consider first the Lagrangian and note that, for $1\leq i\leq P$ and all $p\in V$, it holds
\begin{equation}
	\Jnoncor_\red(\mu)  = \mathcal{L}(u_{\red,\mu},\mu,p), \qquad
	\label{Gradient_lagrangian}
	 \big(\nabla_\mu \Jnoncor_\red(\mu)\big)_i  =  \partial_u \mathcal{L}(u_{\red,\mu},\mu,p)[d_{\mu_i}u_{\red,\mu}] 
	 + d_{\mu_i} \mathcal{L}(u_{\red,\mu},\mu,p).	
\end{equation}
Now, following \cite{QGVW2017}, we define the \emph{inexact gradient} $\noncorgrad_\mu \Jnoncor_\red:\Params \to \R^P$ by 
\begin{equation}
\label{naive:red_grad}
\big(\noncorgrad_\mu \Jnoncor_\red(\mu)\big)_i := d_{\mu_i}\J(u_{\red, \mu}, \mu) + d_{\mu_i} r_\mu^\pr(u_{\red, \mu})[p_{\red, \mu}] = d_{\mu_i} \mathcal{L}(u_{\red,\mu},\mu,p_{\red,\mu})
\end{equation}
for all $1 \leq i \leq P$ and $\mu \in \Params$, where $u_{\red, \mu} \in V_\red^\pr$ and $p_{\red, \mu} \in V_\red^\du$ denote the primal and approximate dual reduced solutions of \eqref{eq:state_red} and \eqref{eq:dual_solution_red}, respectively. With the superscript $\sim$ we stress that $\noncorgrad_\mu \Jnoncor_\red(\mu)$ is not the actual gradient of $\Jnoncor_\red$, but its approximation. Choosing $p= p_{\red,\mu}\in V^\du_\red$ in \eqref{Gradient_lagrangian} and considering \eqref{naive:red_grad} lead to
\begin{align*}
\big(\nabla_\mu \Jnoncor_\red(\mu)\big)_i & = \partial_u \mathcal{L}(u_{\red,\mu},\mu,p_{\red,\mu})[d_{\mu_i}u_{\red,\mu}] + \noncorgrad_\mu \Jnoncor_\red(\mu).
\end{align*}
Note that, in general, it does not hold that $\partial_u\mathcal L(u_{\red,\mu},\mu, p_{\red,\mu})= 0$, since \eqref{eq:dual_solution_red} is not the dual equation with respect to the optimization problem \eqref{eq:Phat_red_uncorrected}, cf.~\cite[Section~1.6.4]{HPUU2009}, which would only be true if $V^\du_\red\subseteq V^\pr_\red$.
Thus, \eqref{naive:red_grad} defines only an approximation of the true gradient of $\Jnoncor_\red$ with the choice made in \cite{QGVW2017}. This introduces an additional approximation error in reconstructing the solution of the optimality system \eqref{eq:optimality_conditions}, which is well visible in our numerical experiments (see Section~\ref{sec:mmexc_example}): the standard RB approach leads to a significant lack in accuracy, requiring additional steps to enrich the RB space and cover this gap. We therefore propose to add a correction term to $\Jnoncor_\red$ based on the previous remarks.

\subsection{ROM for the optimality system -- NCD-corrected approach} 
\label{sec:ncd_approach}
Following the primal-dual RB approach for linear output functionals \cite[Section~2.4]{HAA2017}, it is more suitable to add a correction term to the output functional for which improved error estimates are available. We seek to minimize the Lagrangian corresponding to problem \eqref{P}. A similar approach, in the context of adaptive finite elements, can be found in \cite{BKR2000,Rannacher2006}.
We utilize \eqref{eq:dual_solution_red} to extend the primal-dual RB approach of \cite[Section~2.4]{HAA2017} to quadratic output functionals and define the \emph{NCD-corrected RB reduced functional}
\begin{align}
\cJhatn(\mu) := \mathcal{L}(u_{\red,\mu},\mu,p_{\red,\mu}) = \Jnoncor_\red(\mu) + r_\mu^\pr(u_{\red,\mu})[p_{\red,\mu}]
\label{eq:Jhat_red_corected}
\end{align} 
with $u_{\red, \mu} \in V_\red^\pr$ and $p_{\red, \mu} \in V_\red^\du$ the solutions of \eqref{eq:state_red} and \eqref{eq:dual_solution_red}, respectively.
Note that $\cJhatn$ coincides with the functional $\Jnoncor_\red$ in \eqref{eq:Jhat_red} if $V_\red^\du = V_\red^\pr$.
We then consider the \emph{RB reduced optimization problem} of finding a locally optimal solution $\bar \mu_\red$ of
\begin{align}
\min_{\mu \in \Params} \cJhatn(\mu).
\tag{$\hat{\textnormal{P}}_\red$}\label{Phat_\red}
\end{align}

Computing the actual gradient of $\Jhat_\red$ results in the next proposition, proved following \cite[Section~1.6.2]{HPUU2009}. 
\begin{proposition}[Gradient of the NCD-corrected RB reduced functional]
	\label{prop:true_corrected_reduced_gradient_adj}
	The $i$-th component of the true gradient of $\cJhatn$ is given by
	\begin{align*}
	\big(\nabla_\mu \cJhatn(\mu)\big)_i & = d_{\mu_i}\J(u_{\red,\mu},\mu) + d_{\mu_i}r_\mu^\pr(u_{\red,\mu})[p_{\red,\mu}+w_{\red,\mu}] - d_{\mu_i}r^\du_\mu (u_{\red,\mu},p_{\red,\mu})[z_{\red,\mu}] 
	\end{align*}
	where $u_{\red, \mu} \in V_\red^\pr$ and $p_{\red, \mu} \in V_\red^\du$ denote the RB approximate primal and dual solutions of \eqref{eq:state_red} and \eqref{eq:dual_solution_red}, $z_{\red,\mu} \in V_\red^\du$ solves
	\begin{equation}
	\label{A2:z_eq}
	a_\mu(z_{\red,\mu},q) = -r_\mu^\pr(u_{\red,\mu})[q] \quad \forall q\in V^\du_r
	\end{equation}
	and $w_{\red,\mu} \in V_\red^\pr$ solves
	\begin{equation}
	\label{A2:w_eq}
	a_\mu(v,w_{\red,\mu}) = r_\mu^\du(u_{\red,\mu},p_{\red,\mu})[v]-2k_\mu(z_{\red,\mu},v), \quad \forall v\in V^\pr_r.
	\end{equation}
\end{proposition}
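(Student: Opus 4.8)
The plan is to compute the total derivative of $\cJhatn(\mu) = \LL(u_{\red,\mu},\mu,p_{\red,\mu})$ with respect to the $i$-th parameter component by the chain rule, carefully accounting for the fact that both $u_{\red,\mu}\in V_\red^\pr$ and $p_{\red,\mu}\in V_\red^\du$ depend on $\mu$, and that $\partial_u\LL$ and $\partial_p\LL$ do \emph{not} vanish on the reduced spaces in general (since $V_\red^\du\ne V_\red^\pr$). Concretely, I would write
\[
\big(\nabla_\mu\cJhatn(\mu)\big)_i = \partial_u\LL(u_{\red,\mu},\mu,p_{\red,\mu})[d_{\mu_i}u_{\red,\mu}] + \partial_p\LL(u_{\red,\mu},\mu,p_{\red,\mu})[d_{\mu_i}p_{\red,\mu}] + d_{\mu_i}\LL(u_{\red,\mu},\mu,p_{\red,\mu}),
\]
where the last term is the explicit $\mu$-derivative holding $u,p$ fixed, which already contributes $d_{\mu_i}\J(u_{\red,\mu},\mu) + d_{\mu_i}r_\mu^\pr(u_{\red,\mu})[p_{\red,\mu}]$. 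The key identities are $\partial_p\LL(u,\mu,p)[q] = r_\mu^\pr(u)[q]$ and $\partial_u\LL(u,\mu,p)[v] = \partial_u\J(u,\mu)[v] - a_\mu(v,p) = r_\mu^\du(u,p)[v] \;$(using $\partial_u\J(u,\mu)[v]=j_\mu(v)+2k_\mu(v,u)$ from Remark~\ref{prop:gateaux_wrt_V} and the definition \eqref{eq:dual_residual} of the dual residual).

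The idea now is to eliminate the two sensitivity terms $d_{\mu_i}u_{\red,\mu}$ and $d_{\mu_i}p_{\red,\mu}$ by introducing the two adjoint-type correction functions $z_{\red,\mu}$ and $w_{\red,\mu}$ defined in \eqref{A2:z_eq} and \eqref{A2:w_eq}. First I would handle the primal-sensitivity term: since $d_{\mu_i}u_{\red,\mu}\in V_\red^\pr$, I can test \eqref{A2:w_eq} with $v = d_{\mu_i}u_{\red,\mu}$, obtaining $a_\mu(d_{\mu_i}u_{\red,\mu},w_{\red,\mu}) = r_\mu^\du(u_{\red,\mu},p_{\red,\mu})[d_{\mu_i}u_{\red,\mu}] - 2k_\mu(z_{\red,\mu},d_{\mu_i}u_{\red,\mu})$, i.e.\ $\partial_u\LL(\cdots)[d_{\mu_i}u_{\red,\mu}] = a_\mu(d_{\mu_i}u_{\red,\mu},w_{\red,\mu}) + 2k_\mu(z_{\red,\mu},d_{\mu_i}u_{\red,\mu})$. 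Next, the reduced primal sensitivity equation (the analogue of Proposition~\ref{prop:solution_dmu_eta} on $V_\red^\pr$) reads $a_\mu(d_{\mu_i}u_{\red,\mu},v_\red) = \partial_\mu r_\mu^\pr(u_{\red,\mu})[v_\red]\cdot e_i$ for all $v_\red\in V_\red^\pr$; since $w_{\red,\mu}\in V_\red^\pr$, choosing $v_\red = w_{\red,\mu}$ converts $a_\mu(d_{\mu_i}u_{\red,\mu},w_{\red,\mu})$ into $d_{\mu_i}r_\mu^\pr(u_{\red,\mu})[w_{\red,\mu}]$, which is exactly the $w$-contribution in the claimed formula.

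For the dual-sensitivity term, since $d_{\mu_i}p_{\red,\mu}\in V_\red^\du$, I test \eqref{A2:z_eq} with $q = d_{\mu_i}p_{\red,\mu}$ to get $\partial_p\LL(\cdots)[d_{\mu_i}p_{\red,\mu}] = r_\mu^\pr(u_{\red,\mu})[d_{\mu_i}p_{\red,\mu}] = -a_\mu(z_{\red,\mu},d_{\mu_i}p_{\red,\mu})$. Then I use the reduced dual sensitivity equation (the analogue of Proposition~\ref{prop:dual_solution_dmu_eta} on $V_\red^\du$), $a_\mu(q_\red,d_{\mu_i}p_{\red,\mu}) = \partial_\mu r_\mu^\du(u_{\red,\mu},p_{\red,\mu})[q_\red]\cdot e_i + 2k_\mu(q_\red,d_{\mu_i}u_{\red,\mu})$ for all $q_\red\in V_\red^\du$, with the test function $q_\red = z_{\red,\mu}\in V_\red^\du$. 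This yields $-a_\mu(z_{\red,\mu},d_{\mu_i}p_{\red,\mu}) = -d_{\mu_i}r_\mu^\du(u_{\red,\mu},p_{\red,\mu})[z_{\red,\mu}] - 2k_\mu(z_{\red,\mu},d_{\mu_i}u_{\red,\mu})$. Finally I add everything up: the $+2k_\mu(z_{\red,\mu},d_{\mu_i}u_{\red,\mu})$ coming from the $w$-step and the $-2k_\mu(z_{\red,\mu},d_{\mu_i}u_{\red,\mu})$ coming from the $z$-step cancel exactly — this cancellation is really the whole point of the particular right-hand sides chosen in \eqref{A2:z_eq}–\eqref{A2:w_eq} — and what remains is $d_{\mu_i}\J(u_{\red,\mu},\mu) + d_{\mu_i}r_\mu^\pr(u_{\red,\mu})[p_{\red,\mu}+w_{\red,\mu}] - d_{\mu_i}r_\mu^\du(u_{\red,\mu},p_{\red,\mu})[z_{\red,\mu}]$, which is the assertion.

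The main obstacle is bookkeeping rather than depth: one must be scrupulous about which objects live in $V_\red^\pr$ versus $V_\red^\du$ so that each test function is admissible in the equation it is plugged into (e.g.\ $w_{\red,\mu}\in V_\red^\pr$ is legal in the reduced primal sensitivity equation, $z_{\red,\mu}\in V_\red^\du$ in the reduced dual sensitivity equation), and about the precise form of the reduced dual sensitivity equation for a quadratic cost functional, including the $2k_\mu(\cdot,d_{\mu_i}u_{\red,\mu})$ coupling term whose two appearances must cancel. A secondary point worth stating explicitly is that one is differentiating the \emph{reduced} solution maps, so one should first record that $\mathcal S_\red$ and $\mathcal A_\red$ are Fréchet differentiable with sensitivities characterized by the Galerkin-projected versions of \eqref{eq:primal_sens} and \eqref{eq:dual_sens} onto $V_\red^\pr$ and $V_\red^\du$ respectively — this follows verbatim from the arguments behind Propositions~\ref{prop:solution_dmu_eta} and \ref{prop:dual_solution_dmu_eta} since $a_\mu$ is coercive on the subspaces as well.
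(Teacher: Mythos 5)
Your proposal is correct and follows essentially the same route as the paper, which does not spell out the computation but simply invokes the adjoint approach of \cite[Section~1.6.2]{HPUU2009}: differentiate the Lagrangian by the chain rule, identify $\partial_u\mathcal{L}=r_\mu^\du$ and $\partial_p\mathcal{L}=r_\mu^\pr$, and eliminate the reduced sensitivities by testing \eqref{A2:z_eq} and \eqref{A2:w_eq} against them and the reduced sensitivity equations \eqref{eq:true_primal_reduced_sensitivity}--\eqref{eq:true_dual_reduced_sensitivity} against $w_{\red,\mu}$ and $z_{\red,\mu}$, with the two $2k_\mu(z_{\red,\mu},d_{\mu_i}u_{\red,\mu})$ terms cancelling. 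Your intermediate sensitivity-based expression is exactly Proposition~\ref{prop:true_corrected_reduced_gradient}, so your argument also establishes the equivalence of the two gradient formulas that the paper asserts, and your attention to which test functions are admissible in which reduced space is precisely the point that makes the argument work despite $V_\red^\pr\neq V_\red^\du$.
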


\subsection{A posteriori error analysis} 
\label{sec:a_post_error_estimates}
A posteriori error estimates are required for controlling the accuracy of the reduced order model. 
In addition, we also use them for the error aware TR method (which is explained in Section~\ref{sec:TR}). 
We derive a posteriori error estimates for all reduced terms that we need for the TR method. Moreover, we suggest further advances for the reduction of sensitivities and
gradients.  
From a model reduction perspective, these error estimates need to be computed efficiently such that the time for the evaluation for many parameters can be neglected. 
Note that Assumption~\ref{asmpt:parameter_separable} is crucial for this, since it allows to precompute most of the required terms.  
For any functional $l \in V_h'$ or bilinear form $a: V_h \times V_h \to \R$, we denote their dual or operator norms $\|l\|$ and $\|a\|$ by the continuity constants $\cont{l}$ and $\cont{a}$, respectively. The same consideration applies for the norm $\|\cdot\|$ in $V_h'$ of the residuals.
For $\mu \in \Params$, we denote the coercivity constant of $\bformd$ w.r.t.~the $V_h$-norm by $\underline{\bformd} > 0$. 

\subsubsection{Standard RB estimates for the optimality system} 
\label{sec:estimation_for_the_standard_and_corrected_reduced_order_model}

We start with the residual based a posteriori error estimation for the primal variable, which is a standard result from RB theory and has extensively been used in the literature. For a proof, we refer to \cite{rozza2007}.
\begin{proposition}[Upper bound on the primal model reduction error]	\label{prop:primal_rom_error}
	For $\mu \in \Params$ let $u_{h, \mu} \in V_h$ be the solution of \eqref{eq:state_h} and $u_{\red, \mu} \in V_\red^\pr$
	the solution of \eqref{eq:state_red}. Then it holds
	\begin{align}
	\|u_{h, \mu} - u_{\red, \mu}\| \leq \Delta_\pr(\mu)  := \underline{\bformd}^{-1}\, \|\resd^\pr(u_{\red, \mu})\|.
	\notag
	\end{align}
\end{proposition}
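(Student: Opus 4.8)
The statement is the classical residual-based a posteriori bound for Galerkin-type RB approximations, so the strategy is the standard energy argument exploiting coercivity together with Galerkin orthogonality. I would set $e := u_{h, \mu} - u_{\red, \mu} \in V_h$ (this membership is the key point: both $u_{h,\mu}$ and $u_{\red,\mu}$ lie in $V_h$, since $V_\red^\pr \subset V_h$, so $e$ is an admissible test function in \eqref{eq:state_h}). First I would use coercivity of $\bformd$ on $V_h$ to write $\underline{\bformd}\,\|e\|^2 \leq \bformd(e, e)$.

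Next I would expand $\bformd(e,e) = \bformd(u_{h,\mu}, e) - \bformd(u_{\red,\mu}, e)$ and invoke the discrete primal equation \eqref{eq:state_h} with test function $v_h = e \in V_h$ to replace $\bformd(u_{h,\mu}, e)$ by $\lformd(e)$. This yields $\bformd(e,e) = \lformd(e) - \bformd(u_{\red,\mu}, e) = \resd^\pr(u_{\red,\mu})[e]$ by the definition \eqref{eq:primal_residual} of the primal residual. Then I would bound this by the dual norm of the residual, $\resd^\pr(u_{\red,\mu})[e] \leq \|\resd^\pr(u_{\red,\mu})\|\,\|e\|$, and combine with the coercivity estimate to obtain $\underline{\bformd}\,\|e\|^2 \leq \|\resd^\pr(u_{\red,\mu})\|\,\|e\|$. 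Dividing by $\|e\|$ (the case $e = 0$ being trivial) gives $\|e\| \leq \underline{\bformd}^{-1}\|\resd^\pr(u_{\red,\mu})\| = \Delta_\pr(\mu)$, as claimed.

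There is essentially no obstacle here — the only thing to be careful about is that the residual norm $\|\cdot\|$ is the dual norm on $V_h'$ (as fixed in the paragraph preceding the statement), so the test function $e$ must genuinely live in $V_h$; this is exactly why the conforming assumption $V_\red^\pr \subset V_h$ is used, and it is why the discrete primal equation (rather than the continuous one) is the right relation to invoke. As the paper itself notes, this is a textbook result and one may simply cite \cite{rozza2007}; the sketch above is the complete argument.
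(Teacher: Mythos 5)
Your argument is correct and is precisely the standard coercivity-plus-residual argument that the paper delegates to the cited reference \cite{rozza2007}; it also mirrors the technique the paper itself uses for the analogous sensitivity bounds (e.g.\ Proposition~\ref{prop:primal_solution_dmui_error}). The one point you rightly flag -- that $e = u_{h,\mu} - u_{\red,\mu}$ lies in $V_h$ so the discrete equation \eqref{eq:state_h} may be tested with it and the dual norm of the residual is taken in $V_h'$ -- is exactly the detail that makes the argument go through.
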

For the reduced dual problem, a similar idea can be used to derive the following estimation, accounting for the fact that $p_{\red, \mu}$ is not a Galerkin projection of 
$p_{h, \mu}$. For a proof, we refer to \cite[Lemma 3]{QGVW2017}.
\begin{proposition}[Upper bound on the dual model reduction error]
	\label{prop:dual_rom_error}
	For $\mu \in \Params$, let $p_{h, \mu} \in V_h$ be the solution of \eqref{eq:dual_solution_h} and $p_{\red, \mu} \in V_\red^\du$
	the solution of \eqref{eq:dual_solution_red}. Then it holds
	\begin{align}
	\|p_{h, \mu} - p_{\red, \mu}\| &\leq \Delta_\du(\mu) := \underline{\bformd}^{-1}\big(2 \cont{\kformd}\;\Delta_\pr(\mu) + \|\resd^\du(u_{\red, \mu}, p_{\red, \mu})\|\Big).
	\notag
	\end{align}
\end{proposition}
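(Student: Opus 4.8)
The plan is to mimic the classical residual--coercivity argument used for Proposition~\ref{prop:primal_rom_error}, while keeping track of the fact that the reduced dual equation \eqref{eq:dual_solution_red} is driven by the reduced primal state $u_{\red,\mu}$ rather than by the truth primal solution $u_{h,\mu}$. Set $e_p := p_{h,\mu} - p_{\red,\mu}$; since $V_\red^\du \subset V_h$ we have $e_p \in V_h$, so $e_p$ is an admissible test function in the discrete dual equation \eqref{eq:dual_solution_h}.

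First I would invoke coercivity of $\bformd$ with respect to the $V_h$-norm to write $\underline{\bformd}\,\|e_p\|^2 \le \bformd(e_p, e_p)$, and then evaluate the summand $\bformd(e_p, p_{h,\mu})$ by testing \eqref{eq:dual_solution_h} with $v_h = e_p$. Recalling the definition \eqref{eq:dual_residual} of the dual residual and adding and subtracting $2\kformd(e_p, u_{\red,\mu})$, one obtains the identity
\begin{equation*}
\bformd(e_p, e_p) = \resd^\du(u_{\red,\mu}, p_{\red,\mu})[e_p] + 2\kformd\big(e_p,\, u_{h,\mu} - u_{\red,\mu}\big),
\end{equation*}
where linearity of $\kformd$ in its second argument has been used.

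It then remains to bound the two terms on the right-hand side: the first by $\|\resd^\du(u_{\red,\mu}, p_{\red,\mu})\|\,\|e_p\|$ via the definition of the dual norm, the second by $2\cont{\kformd}\,\|e_p\|\,\|u_{h,\mu}-u_{\red,\mu}\|$ via continuity of $\kformd$, and finally $\|u_{h,\mu}-u_{\red,\mu}\| \le \Delta_\pr(\mu)$ by Proposition~\ref{prop:primal_rom_error}. Combining with the coercivity lower bound and dividing by $\|e_p\|$ (the case $e_p = 0$ being trivial) yields $\|e_p\| \le \underline{\bformd}^{-1}\big(2\cont{\kformd}\,\Delta_\pr(\mu) + \|\resd^\du(u_{\red,\mu}, p_{\red,\mu})\|\big) = \Delta_\du(\mu)$.

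I do not expect a genuine obstacle here. The only points demanding care are bookkeeping the argument slots of the symmetric forms $\bformd$ and $\kformd$, so that \eqref{eq:dual_solution_h} and \eqref{eq:dual_residual} are applied in the correct position (symmetry renders this harmless), and observing that the cross term $2\kformd(e_p, u_{h,\mu}-u_{\red,\mu})$ --- which has no analogue in the purely primal estimate --- is precisely what produces the $2\cont{\kformd}\,\Delta_\pr(\mu)$ summand of $\Delta_\du(\mu)$, thereby encoding the propagation of the primal reduction error into the dual error bound.
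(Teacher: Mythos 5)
Your argument is correct and is exactly the standard residual--coercivity proof that the paper delegates to \cite[Lemma~3]{QGVW2017}: coercivity applied to $e_p$, the Galerkin identity for $p_{h,\mu}$, the add-and-subtract of $2\kformd(e_p,u_{\red,\mu})$ to expose the dual residual, and the propagation of the primal error through the $2\cont{\kformd}\Delta_\pr(\mu)$ term. No gaps.
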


In the next proposition we state the result of the standard approach from \cite[Theorem~4]{QGVW2017}. Furthermore we show an improved version by using, in contrast to \cite{QGVW2017}, the NCD-corrected reduced functional, which results in an optimal higher order a posteriori upper bound without lower order terms. 

\begin{proposition}[Upper bound on the model reduction error of the reduced output]
	\label{prop:Jhat_error} 
	\hfill
	\begin{enumerate}[(i)]
		\item [\emph{(i)}] With the notation from above, we have for the standard RB reduced cost functional
		\begin{align}
		|\Jhat_h(\mu) - &\Jnoncor_r(\mu)| \leq \Delta_{\Jnoncor_\red}(\mu) :=  \Delta_\pr(\mu) \|\resd^\du(u_{\red, \mu}, p_{\red,\mu})\| + \Delta_\pr(\mu)^2 \cont{\kformd} + \big|r_{\mu}^\pr(u_{\red, \mu})[p_{\red,\mu}]\big|.
		\notag
		\end{align}		
		\item [\emph{(ii)}] Furthermore, we have for the NCD-corrected RB reduced cost functional (or equivalently for the Lagrangian for any $p\in V_h$)
		\begin{align}
		|\Jhat_h(\mu) - &\cJhatn(\mu)| = |\mathcal{L}(u_{h,\mu},\mu,p)-\mathcal{L}(u_{\red,\mu},\mu,p)| \leq \Delta_{\cJhatn}(\mu) :=  \Delta_\pr(\mu) \|\resd^\du(u_{\red, \mu}, p_{\red,\mu})\| + \Delta_\pr(\mu)^2 \cont{\kformd}.
		\notag
		\end{align}
	\end{enumerate}
\end{proposition}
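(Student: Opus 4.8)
The plan is to establish part (ii) first by a short exact identity and then deduce part (i) from it by the triangle inequality, using that by \eqref{eq:Jhat_red_corected} the two reduced functionals $\cJhatn$ and $\Jnoncor_\red$ differ by exactly the primal residual term $r_\mu^\pr(u_{\red,\mu})[p_{\red,\mu}]$.

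For (ii), the starting point I would use is that the primal residual $r_\mu^\pr(u_{h,\mu})$ vanishes on all of $V_h$ by \eqref{eq:state_h}, so that $\Jhat_h(\mu) = \mathcal{L}(u_{h,\mu},\mu,p)$ for every $p \in V_h$, in particular for $p = p_{\red,\mu} \in V_\red^\du \subset V_h$; on the other hand $\cJhatn(\mu) = \mathcal{L}(u_{\red,\mu},\mu,p_{\red,\mu})$ by definition. Hence $\Jhat_h(\mu) - \cJhatn(\mu) = \mathcal{L}(u_{h,\mu},\mu,p_{\red,\mu}) - \mathcal{L}(u_{\red,\mu},\mu,p_{\red,\mu})$, and I would expand both Lagrangians through $\mathcal{L}(u,\mu,p) = \Theta(\mu) + j_\mu(u) + k_\mu(u,u) + l_\mu(p) - a_\mu(u,p)$. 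Writing $e_\mu := u_{h,\mu} - u_{\red,\mu}$, the $\Theta(\mu)$ and $l_\mu(p_{\red,\mu})$ contributions cancel, the linear part leaves $j_\mu(e_\mu) - a_\mu(e_\mu,p_{\red,\mu})$, and the quadratic part, by symmetry and bilinearity of $k_\mu$, gives $k_\mu(u_{h,\mu},u_{h,\mu}) - k_\mu(u_{\red,\mu},u_{\red,\mu}) = 2k_\mu(e_\mu,u_{\red,\mu}) + k_\mu(e_\mu,e_\mu)$. Collecting the terms linear in $e_\mu$ and comparing with the definition \eqref{eq:dual_residual} of the dual residual then yields the identity
\[
\Jhat_h(\mu) - \cJhatn(\mu) = r_\mu^\du(u_{\red,\mu},p_{\red,\mu})[e_\mu] + k_\mu(e_\mu,e_\mu).
\]
From here the estimate is routine: Cauchy--Schwarz and continuity give $|r_\mu^\du(u_{\red,\mu},p_{\red,\mu})[e_\mu]| \le \|r_\mu^\du(u_{\red,\mu},p_{\red,\mu})\|\,\|e_\mu\|$ and $|k_\mu(e_\mu,e_\mu)| \le \cont{\kformd}\,\|e_\mu\|^2$, and Proposition~\ref{prop:primal_rom_error} bounds $\|e_\mu\| \le \Delta_\pr(\mu)$, which produces $\Delta_{\cJhatn}(\mu)$. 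The displayed equality in the statement is simply this computation read with $p = p_{\red,\mu}$ (and, for the left-hand Lagrangian, any $p \in V_h$, by the vanishing of the primal residual).

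For (i), I would start from $\Jnoncor_\red(\mu) = \cJhatn(\mu) - r_\mu^\pr(u_{\red,\mu})[p_{\red,\mu}]$ (see \eqref{eq:Jhat_red_corected}), so that $\Jhat_h(\mu) - \Jnoncor_\red(\mu) = \big(\Jhat_h(\mu) - \cJhatn(\mu)\big) + r_\mu^\pr(u_{\red,\mu})[p_{\red,\mu}]$, and the triangle inequality together with part (ii) gives precisely $\Delta_{\Jnoncor_\red}(\mu) = \Delta_{\cJhatn}(\mu) + |r_\mu^\pr(u_{\red,\mu})[p_{\red,\mu}]|$.

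I do not expect a genuine obstacle here: the argument is essentially bookkeeping. The one point that needs care — and that is the whole motivation for the NCD correction — is that $V_\red^\du$ need not be a subspace of $V_\red^\pr$, so $r_\mu^\pr(u_{\red,\mu})[p_{\red,\mu}]$ does not vanish and cannot be absorbed; it is exactly the lower-order term that survives in (i) and is removed in (ii). I would also double-check the symmetry and sign conventions when reassembling $j_\mu(e_\mu) + 2k_\mu(e_\mu,u_{\red,\mu}) - a_\mu(e_\mu,p_{\red,\mu})$ into $r_\mu^\du(u_{\red,\mu},p_{\red,\mu})[e_\mu]$, as that is the only place a slip could occur.
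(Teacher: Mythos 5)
Your proof is correct and, for part (ii), follows essentially the same route as the paper: the exact identity $\Jhat_h(\mu) - \cJhatn(\mu) = r_\mu^\du(u_{\red,\mu},p_{\red,\mu})[e_\mu] + k_\mu(e_\mu,e_\mu)$ is precisely what the paper derives (it writes the difference as $j_\mu(e_\mu) + k_\mu(u_{h,\mu},u_{h,\mu}) - k_\mu(u_{\red,\mu},u_{\red,\mu}) - a_\mu(e_\mu,p_{\red,\mu})$ and reassembles it via the dual residual), followed by the same Cauchy--Schwarz and $\|e_\mu\|\le\Delta_\pr(\mu)$ steps. The only divergence is in (i), where the paper simply cites \cite[Theorem~4]{QGVW2017}, whereas your triangle-inequality deduction of (i) from (ii) via $\Jnoncor_\red(\mu) = \cJhatn(\mu) - r_\mu^\pr(u_{\red,\mu})[p_{\red,\mu}]$ is a valid, self-contained alternative that reproduces exactly the claimed estimator.
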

\begin{proof}
  We refer to \cite[Theorem~4]{QGVW2017} for a proof of $(i)$.
  Regarding $(ii)$, using
      the shorthand $e_{h, \mu}^\pr := u_{h, \mu} - u_{\red, \mu}$ and $a_\mu(e_{h, \mu}^\pr,p_{\red,\mu}) = r_{\mu}^\pr(u_{\red, \mu})[p_{\red,\mu}]$ lead us to 
		\begin{align*}
		|\Jhat_h(\mu) - \cJhatn(\mu)| &= |j_{h, \mu}(e_{h, \mu}^\pr) + \kformd(u_{h, \mu}, u_{h, \mu}) - \kformd(u_{\red, \mu}, u_{\red, \mu}) - a_\mu(e_{h, \mu}^\pr,p_{\red,\mu})| \notag\\
		&= |r_{\mu}^\du(u_{\red, \mu}, p_{\red,\mu})[e_{h, \mu}^\pr] - 2\kformd(u_{\red, \mu},e_{h, \mu}^\pr)
		+ \kformd(u_{h, \mu}, u_{h, \mu}) - \kformd(u_{\red, \mu}, u_{\red, \mu})|  \notag \\
		&\leq \|r_{\mu}^\du(u_{\red, \mu}, p_{\red,\mu})\|\; \|e_{h, \mu}^\pr\| +\cont{\kformd} \; \|e_{h, \mu}^\pr\|^2,
		\notag
		\end{align*}
		where we used the definition of the dual residual in the second equality and Cauchy-Schwarz for the inequality. 
		The assertion follows by using Propostion~\ref{prop:primal_rom_error}. 
\end{proof}
\begin{remark}\label{continuity_of_estimator}
	The estimator $\Delta_{\cJhatn}(\mu)$ is continuous w.r.t.~$\mu$,
	since the Riesz-representative of the residual is continuous. 
\end{remark}
For the inexact and NCD-corrected gradient, we derive the following a posteriori estimators. 
\begin{proposition}[Upper bound on the model reduction error of the gradient of reduced output] 
	\label{prop:grad_Jhat_error}
	\hfill
	\begin{enumerate}[(i)]
		\item [\emph{(i)}] For the inexact gradient $\noncorgrad_\mu \Jnoncor_\red(\mu)$ from the standard-RB approach \eqref{naive:red_grad}, we have  
		\begin{align}
		\big\|\nabla_\mu \Jhat_h(\mu) - \noncorgrad_\mu \Jnoncor_\red(\mu)\big\|_2 &\leq \Delta_{\noncorgrad \Jnoncor_\red}(\mu) = \big\|\underline{\Delta_{\noncorgrad \Jnoncor_\red}(\mu)}\big\|_2 \quad\quad\text{with}
		\notag\\
		\big(\underline{\Delta_{\noncorgrad \Jnoncor_\red}(\mu)}\big)_i  
		:= 2\Delta_\pr(\mu) \|u_{\red, \mu}\|\; \cont{d_{\mu_i} \kformd}
		&+ \Delta_\pr(\mu)\big(\cont{d_{\mu_i} \jformd} + \cont{d_{\mu_i} \bformd}\; \|p_{\red, \mu}\|\big)
		\notag\\
      + \Delta_\du(\mu)\big(\cont{d_{\mu_i} \lformd} &+ \cont{d_{\mu_i} \bformd}\; \|u_{\red, \mu}\|\big)
		+ \Delta_\pr(\mu)\; \Delta_\du(\mu)\; \cont{d_{\mu_i} \bformd}
		+ (\Delta_\pr)^2(\mu)\; \cont{d_{\mu_i} \kformd}.
		\notag
		\end{align}
		\item [\emph{(ii)}] For the gradient $\nabla_{\mu} \cJhatn(\mu)$ of the NCD-corrected reduced functional, computed with the adjoint approach 
		from Definition~{\rm{\ref{prop:true_corrected_reduced_gradient_adj}}}, we have 
		\begin{align*}
		&\big\|\nabla_\mu \Jhat_h(\mu) - \nabla_{\mu} \cJhatn(\mu)\big\|_2 \leq \Delta^*_{\nabla \Jhat_\red}(\mu) 
    = \big\|\underline{\Delta^*_{\nabla \Jhat_\red}(\mu)}\big\|_2 \quad\quad\text{with} \\
      \big(\underline{\Delta^*_{\nabla \Jhat_\red}(\mu)}\big)_i &:=
		2\Delta_\pr(\mu) \|u_{\red, \mu}\|\; \cont{d_{\mu_i} \kformd}
		+ \Delta_\pr(\mu)\big(\cont{d_{\mu_i} \jformd} + \cont{d_{\mu_i} \bformd}\; \|p_{\red, \mu}\|\big)\\
		&+ \Delta_\du(\mu)\big(\cont{d_{\mu_i} \lformd} + \cont{d_{\mu_i} \bformd}\; \|u_{\red, \mu}\|\big)
		+ \Delta_\pr(\mu)\; \Delta_\du(\mu)\; \cont{d_{\mu_i} \bformd}
		+ (\Delta_\pr)^2(\mu)\; \cont{d_{\mu_i} \kformd} \\
		&+ (\cont{d_{\mu_i}l_{\mu}} + \cont{d_{\mu_i}a_{\mu}} \| u_{\red,\mu} \|) \underline{\bformd}^{-1} \big( \|r_\mu^\du(u_{\red,\mu},p_{\red,\mu})\| 
		+ 2\cont{k_\mu} \underline{\bformd}^{-1} \|r_\mu^\pr(u_{\red,\mu})\| \big) \\
		&+ \underline{\bformd}^{-1} \|r_\mu^\pr(u_{\red,\mu})\| \big( \cont{d_{\mu_i}j}+ 2\cont{d_{\mu_i}k} \|u_{\red,\mu}\|
		+ \cont{d_{\mu_i}a} \|p_{\red,\mu}\| \big).
		\end{align*}
	\end{enumerate}
\end{proposition}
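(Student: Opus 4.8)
The plan is to prove both bounds with the same two ingredients. First, expand the FOM gradient $\nabla_\mu \Jhat_h(\mu)$, which by Proposition~\ref{prop:grad_Jhat} applied on the ``truth'' space $V_h$ equals $\nabla_\mu\J(u_{h,\mu},\mu) + \nabla_\mu r_\mu^\pr(u_{h,\mu})[p_{h,\mu}]$, subtract the respective reduced gradient, and split every linear and bilinear contribution along the primal and dual model reduction errors $e_{h,\mu}^\pr := u_{h,\mu} - u_{\red,\mu}$ and $e_{h,\mu}^\du := p_{h,\mu} - p_{\red,\mu}$. Second, estimate the resulting terms componentwise by Cauchy--Schwarz and the continuity constants, using $\|e_{h,\mu}^\pr\| \le \Delta_\pr(\mu)$ and $\|e_{h,\mu}^\du\| \le \Delta_\du(\mu)$ from Propositions~\ref{prop:primal_rom_error} and~\ref{prop:dual_rom_error}. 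The passage from the componentwise estimates to the stated $\ell^2$-bounds is then immediate, since $|x_i| \le y_i$ with $y_i \ge 0$ for all $i$ implies $\|x\|_2 \le \|y\|_2$.

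For~(i), I would start from the componentwise identity $\big(\nabla_\mu \Jhat_h(\mu) - \noncorgrad_\mu \Jnoncor_\red(\mu)\big)_i = \big(d_{\mu_i}\J(u_{h,\mu},\mu) - d_{\mu_i}\J(u_{\red,\mu},\mu)\big) + \big(d_{\mu_i} r_\mu^\pr(u_{h,\mu})[p_{h,\mu}] - d_{\mu_i} r_\mu^\pr(u_{\red,\mu})[p_{\red,\mu}]\big)$, noting that the contribution of the parameter functional $\Theta$ cancels. Using $d_{\mu_i}\J(u,\mu) = d_{\mu_i}\Theta(\mu) + (d_{\mu_i} j_\mu)(u) + (d_{\mu_i} k_\mu)(u,u)$ and $d_{\mu_i} r_\mu^\pr(u)[p] = (d_{\mu_i} l_\mu)(p) - (d_{\mu_i} a_\mu)(u,p)$ together with (bi)linearity, the linear parts produce $(d_{\mu_i} j_\mu)(e_{h,\mu}^\pr)$ and $(d_{\mu_i} l_\mu)(e_{h,\mu}^\du)$, the quadratic $k$-term produces $2(d_{\mu_i}k_\mu)(u_{\red,\mu}, e_{h,\mu}^\pr) + (d_{\mu_i}k_\mu)(e_{h,\mu}^\pr, e_{h,\mu}^\pr)$, and the $a$-term produces $(d_{\mu_i}a_\mu)(e_{h,\mu}^\pr, p_{\red,\mu}) + (d_{\mu_i}a_\mu)(u_{\red,\mu}, e_{h,\mu}^\du) + (d_{\mu_i}a_\mu)(e_{h,\mu}^\pr, e_{h,\mu}^\du)$. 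Bounding each factor by its continuity constant and the corresponding $\Delta_\pr(\mu)$ or $\Delta_\du(\mu)$ and collecting terms reproduces exactly $\big(\underline{\Delta_{\noncorgrad \Jnoncor_\red}(\mu)}\big)_i$.

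For~(ii), I would exploit that the NCD-corrected gradient of Proposition~\ref{prop:true_corrected_reduced_gradient_adj} is the inexact gradient plus two correction terms, that is, $\big(\nabla_\mu \cJhatn(\mu)\big)_i = \big(\noncorgrad_\mu \Jnoncor_\red(\mu)\big)_i + d_{\mu_i} r_\mu^\pr(u_{\red,\mu})[w_{\red,\mu}] - d_{\mu_i} r_\mu^\du(u_{\red,\mu}, p_{\red,\mu})[z_{\red,\mu}]$, so that its error equals the error from~(i) minus these two terms. The triangle inequality then reduces the task to bounding $|d_{\mu_i} r_\mu^\pr(u_{\red,\mu})[w_{\red,\mu}]| \le (\cont{d_{\mu_i}l_\mu} + \cont{d_{\mu_i}a_\mu}\|u_{\red,\mu}\|)\|w_{\red,\mu}\|$ and $|d_{\mu_i} r_\mu^\du(u_{\red,\mu}, p_{\red,\mu})[z_{\red,\mu}]| \le (\cont{d_{\mu_i}j_\mu} + 2\cont{d_{\mu_i}k_\mu}\|u_{\red,\mu}\| + \cont{d_{\mu_i}a_\mu}\|p_{\red,\mu}\|)\|z_{\red,\mu}\|$. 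For the auxiliary functions I would test \eqref{A2:z_eq} with $q = z_{\red,\mu}$ and invoke coercivity of $a_\mu$ on $V_\red^\du \subset V_h$ to get $\|z_{\red,\mu}\| \le \underline{\bformd}^{-1}\|r_\mu^\pr(u_{\red,\mu})\|$, then test \eqref{A2:w_eq} with $v = w_{\red,\mu}$ to get $\|w_{\red,\mu}\| \le \underline{\bformd}^{-1}\big(\|r_\mu^\du(u_{\red,\mu}, p_{\red,\mu})\| + 2\cont{k_\mu}\|z_{\red,\mu}\|\big)$, and substitute the first estimate into the second. Adding the resulting contributions to $\big(\underline{\Delta_{\noncorgrad \Jnoncor_\red}(\mu)}\big)_i$ yields precisely $\big(\underline{\Delta^*_{\nabla \Jhat_\red}(\mu)}\big)_i$, and passing to the $\ell^2$-norm finishes the proof.

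The computations are routine Cauchy--Schwarz estimates; the only real care lies in the bookkeeping -- matching the split $k$- and $a$-contributions term by term to the claimed expressions rather than settling for a valid but coarser bound -- and in the minor observation that coercivity of $a_\mu$ descends to the reduced subspaces $V_\red^\pr, V_\red^\du \subset V_h$, which is what legitimizes the a priori bounds on $\|w_{\red,\mu}\|$ and $\|z_{\red,\mu}\|$. I expect handling all the $k$- and $a$-terms in part~(i) to be the most error-prone step.
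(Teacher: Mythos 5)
Your proposal is correct and follows essentially the same route as the paper: the same componentwise decomposition along $e_{h,\mu}^\pr$ and $e_{h,\mu}^\du$ with Cauchy--Schwarz for part (i) (where the paper simply delegates the $r^\pr$-contributions to the reference \cite{QGVW2017} rather than expanding them, as you do), and for part (ii) the same identification of $\nabla_\mu\cJhatn$ as the inexact gradient plus the two residual correction terms, bounded via the coercivity estimates for $\|z_{\red,\mu}\|$ and $\|w_{\red,\mu}\|$ obtained by testing \eqref{A2:z_eq} and \eqref{A2:w_eq} with the solutions themselves. All bookkeeping matches the stated estimators exactly.
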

\begin{proof}
(i) For $\Delta_{\noncorgrad \Jnoncor_\red}(\mu)$, we have 
		\begin{align*}
		\big(\nabla_\mu \Jhat_h(\mu) - \noncorgrad_{\mu} \cJhatn(\mu)\big)_i
		&=d_{\mu_i}\J(u_{h, \mu}, \mu) - d_{\mu_i}\J(u_{\red, \mu}, \mu)  + d_{\mu_i}r_\mu^\pr(u_{h,\mu})[p_{h,\mu}] - d_{\mu_i}r_\mu^\pr(u_{\red,\mu})[p_{\red,\mu}].
		\end{align*}
		Regarding the first contribution, we obtain with $\|u_{h, \mu}\|_h \leq \|e_{h, \mu}^\pr\|_h + \|u_{\red, \mu}\|_h$
		\begin{align*}
		\big|d_{\mu_i}\J(u_{h, \mu}, \mu) - d_{\mu_i}\J(u_{\red, \mu}, \mu)\big| &=
		| d_{\mu_i}j_{h, \mu}(e_{h, \mu}^\pr) +  d_{\mu_i}k_{\mu}(e_{h, \mu}^\pr, u_{\red, \mu}) +  d_{\mu_i}k_{\mu}(u_{h, \mu}, e_{h, \mu}^\pr)|
		\\
		&\leq \Delta_\pr(\mu)\Big( \cont{d_{\mu_i} \jformd} + \cont{d_{\mu_i} \kformd}\big(2 \|u_{\red, \mu}\| + \Delta_\pr(\mu)\big)\Big).
		\end{align*} 
		For the other contributions we refer to \cite[Theorem~5]{QGVW2017}.
		
		(ii) For the adjoint estimator $\Delta^*_{\nabla_\mu \Jhat_\red}$, we have 
		\begin{align*}
		\partial_\mu \cJhatn(\mu)\cdot\nu & = \partial_\mu \J(u_{\red,\mu},\mu)\cdot\nu + \partial_\mu r_\mu^\pr(u_{\red,\mu})[p_{\red,\mu}+w_{\red,\mu}]\cdot\nu
		- \partial_\mu r_\mu^\du(u_{\red,\mu},p_{\red,\mu})[z_{\red,\mu}]\cdot\nu
		\end{align*}
		and thus 
		\begin{align*}
		\big(\nabla_\mu \Jhat_h(\mu) - \nabla_{\mu} \cJhatn(\mu)\big)_i
		&=d_{\mu_i}\J(u_{h, \mu}, \mu) - d_{\mu_i}\J(u_{\red, \mu}, \mu)  + d_{\mu_i}r_\mu^\pr(u_{h,\mu})[p_{h,\mu}] - d_{\mu_i}r_\mu^\pr(u_{\red,\mu})[p_{\red,\mu}] \\
		& \qquad \qquad - d_{\mu_i} r_\mu^\pr(u_{\red,\mu})[w_{\red,\mu}] - d_{\mu_i} r_\mu^\du(u_{\red,\mu},p_{\red,\mu})[z_{\red,\mu}]
		\end{align*}
		The first line is equal to the estimator $\Delta_{\noncorgrad \Jnoncor_\red}(\mu)$,
		the first term of the second line can be estimated by 
		\begin{align*}
		d_{\mu_i} r_\mu^\pr(u_{\red,\mu})[w_{\red,\mu}] &\leq \cont{d_{\mu_i}l_{\mu}}\|w_{\mu}\| + \cont{d_{\mu_i}a_{\mu}} \| u_{\red,\mu} \| \|w_{\mu}\| 
		\end{align*}
		The second term can analogously be estimated by
		\begin{align*}
		d_{\mu_i} r_\mu^\du(u_{\red,\mu},p_{\red,\mu})[z_{\mu}] \leq \cont{d_{\mu_i}j} \|z_{\mu}\| + 2\cont{d_{\mu_i}k} \| z_{\mu} \| \|u_{\red,\mu}\|
		+ \cont{d_{\mu_i}a} \|z_{\mu}\| \|p_{\red,\mu}\|.
		\end{align*}
		We also have 
		\begin{align*}
		\underline{\bformd} \|w_{\mu}\|^2 &\leq \bformd(w_{\mu},w_{\mu}) = r_\mu^\du(u_{\red,\mu},p_{\red,\mu})[w_{\mu}]-2k_\mu(z_{\mu},w_{\mu}) 
		\leq \|r_\mu^\du(u_{\red,\mu},p_{\red,\mu})\| \|w_{\mu}\| + 2\cont{k_\mu} \|z_{\mu}\| \|w_{\mu}\|
		\end{align*}
		which gives
		\begin{equation*}
		\|w_{\mu}\| \leq \underline{\bformd}^{-1} \left( \|r_\mu^\du(u_{\red,\mu},p_{\red,\mu})\| + 2\cont{k_\mu} \|z_{\mu}\| \right).
		\end{equation*}
		For $z_\mu$ we estimate
		\begin{align*}
		\underline{\bformd} \|z_{\mu}\|^2 &\leq \bformd(z_{\mu},z_{\mu}) = -r_\mu^\pr(u_{\red,\mu})[z_\mu]\leq \|r_\mu^\pr(u_{\red,\mu})\| \|z_{\mu}\|.
		\end{align*}
		Summing all together gives the assertion.
\end{proof}

In a view of Section~\ref{sec:ncd_approach}, we emphasize that the estimator for the NCD-corrected gradient does not show a better approximation
of the FOM gradient since more terms are added to the standard estimate. Propositon \ref{prop:Jhat_error}.(ii) suggests that there exist an estimator of higher order which we derive in the following section.  
\subsubsection{Sensitivity based approximation and estimation} 
\label{sec:sens_estimation}
We elaborate a better estimator for the NCD-corrected gradient by using sensitivities of the reduced primal and dual
solutions. In addition, approximated sensitivities that are computed from the FOM sensitivities suggest an even better approximation of the FOM gradient.   

We define the derivatives of the primal and dual solution maps associated with \eqref{eq:optimality_conditionsRB} in direction $\nu \in \mathbb{R}^P$ as the solutions
$d_{\nu} u_{\red, \mu} \in V_\red^\pr$ and $d_{\nu} p_{\red, \mu} \in V_\red^\du$ of
\begin{align}
a_\mu(d_{\nu} u_{\red, \mu}, v_\red) &= \partial_{\mu} r_\mu^\pr(u_{\red, \mu})[v_\red]\cdot\nu &&\text{for all } v_\red \in V_\red^\pr \text{ and}\label{eq:true_primal_reduced_sensitivity}\\
a_\mu(q_\red, d_{\nu} p_{\red, \mu}) &= -\partial_\mu a_\mu(q_\red, p_{r, \mu})\cdot\nu + d_\mu\partial_u \J(u_{\red, \mu}, \mu)[q_\red]\cdot\nu\notag\\
&= \partial_\mu r_\mu^\du(u_{\red, \mu}, p_{\red, \mu})[q_\red]\cdot\nu + 2k_\mu(q_\red, d_{\nu} u_{\red, \mu}) &&\text{for all } q_\red \in V_\red^\du,
\label{eq:true_dual_reduced_sensitivity}
\end{align}
respectively, analogously to Propositions \ref{prop:solution_dmu_eta} and \ref{prop:dual_solution_dmu_eta},
where the last equality holds for quadratic functionals as in \eqref{P.argmin}. 
With these sensitivities we can compute the same gradient of the NCD-corrected RB reduced functional from Propostion~\ref{prop:true_corrected_reduced_gradient_adj} in a different manner.

\begin{proposition}[Gradient of the NCD-corrected RB reduced functional -- Sensitivity approach]
	\label{prop:true_corrected_reduced_gradient}
	The $i$-th component of the true gradient of $\cJhatn$, $\nabla_\mu\cJhatn:\Params \to \R^P$, is given by
	\begin{align*}
	\big(\nabla_\mu \cJhatn(\mu)\big)_i & = d_{\mu_i}\J(u_{\red, \mu}, \mu) + d_{\mu_i} r_\mu^\pr(u_{\red, \mu})[p_{\red, \mu}] 
	+ r_\mu^\pr(u_{\red, \mu})[d_{\mu_i}p_{\red, \mu}]
	+ r_\mu^\du(u_{\red, \mu},p_{\red, \mu})[d_{\mu_i} u_{\red, \mu}] 
	\end{align*}
	for all $1 \leq i \leq P$ and $\mu \in \Params$, where $u_{\red, \mu} \in V_\red^\pr$ and $p_{\red, \mu} \in V_\red^\du$ solve \eqref{eq:optimality_conditionsRB}, $d_{\mu_i}u_{\red, \mu} \in V_\red^\pr$ and $d_{\mu_i}p_{\red, \mu} \in V_\red^\du$ denote
	the derivatives of RB primal and dual solution maps from \eqref{eq:true_primal_reduced_sensitivity} and \eqref{eq:true_dual_reduced_sensitivity}. 
\end{proposition}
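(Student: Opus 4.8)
The plan is to differentiate the defining identity $\cJhatn(\mu) = \mathcal{L}(u_{\red,\mu},\mu,p_{\red,\mu})$ from \eqref{eq:Jhat_red_corected} directly with respect to $\mu_i$ by the chain rule, regarding $\mathcal{L}$ as a function of its three slots $(u,\mu,p)$ and tracking both the explicit dependence on $\mu$ and the implicit dependence through the reduced maps $\mu \mapsto u_{\red,\mu}$ and $\mu \mapsto p_{\red,\mu}$.

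First I would establish that these reduced maps are Fr\'echet differentiable and that their derivatives are exactly the solutions of \eqref{eq:true_primal_reduced_sensitivity} and \eqref{eq:true_dual_reduced_sensitivity}. By Assumption~\ref{asmpt:parameter_separable}, the reduced primal equation \eqref{eq:state_red} and the reduced dual equation \eqref{eq:dual_solution_red} are linear systems whose coefficients depend (twice) differentiably on $\mu$, and $a_\mu$ is coercive on $V_h \supseteq V_\red^\pr, V_\red^\du$, so the implicit function theorem applies verbatim as in Propositions~\ref{prop:solution_dmu_eta} and \ref{prop:dual_solution_dmu_eta}. Differentiating \eqref{eq:state_red} w.r.t.\ $\mu_i$ (the test function staying in $V_\red^\pr$) produces \eqref{eq:true_primal_reduced_sensitivity}; differentiating \eqref{eq:dual_solution_red} w.r.t.\ $\mu_i$ -- which besides the explicit $\mu$-dependence also picks up the implicit dependence of the right-hand side on $u_{\red,\mu}$, giving the term $2k_\mu(q_\red, d_{\mu_i}u_{\red,\mu})$ -- produces \eqref{eq:true_dual_reduced_sensitivity}. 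Hence $d_{\mu_i}u_{\red,\mu} \in V_\red^\pr$ and $d_{\mu_i}p_{\red,\mu} \in V_\red^\du$ are genuinely the partial derivatives of the reduced solution maps.

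Next, writing $\mathcal{L}(u,\mu,p) = \Theta(\mu) + j_\mu(u) + k_\mu(u,u) + l_\mu(p) - a_\mu(u,p) = \J(u,\mu) + r_\mu^\pr(u)[p]$, the chain rule gives
\begin{align*}
\big(\nabla_\mu \cJhatn(\mu)\big)_i = {}& d_{\mu_i}\mathcal{L}(u_{\red,\mu},\mu,p_{\red,\mu}) + \partial_u\mathcal{L}(u_{\red,\mu},\mu,p_{\red,\mu})[d_{\mu_i}u_{\red,\mu}] \\
&+ \partial_p\mathcal{L}(u_{\red,\mu},\mu,p_{\red,\mu})[d_{\mu_i}p_{\red,\mu}],
\end{align*}
where $d_{\mu_i}\mathcal{L}$ is the partial derivative with respect to the explicit middle argument only; this is the natural extension of \eqref{Gradient_lagrangian} by the extra summand arising because $p_{\red,\mu}$ itself depends on $\mu$. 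It then remains to identify the three contributions. Using the separable decompositions and Remark~\ref{rem:gateaux_wrt_P}, $d_{\mu_i}\mathcal{L}(u,\mu,p) = d_{\mu_i}\J(u,\mu) + d_{\mu_i}r_\mu^\pr(u)[p]$, which yields the first two terms of the asserted formula. Since $\mathcal{L}$ is linear in its third slot, $\partial_p\mathcal{L}(u,\mu,p)[q] = l_\mu(q) - a_\mu(u,q) = r_\mu^\pr(u)[q]$ by \eqref{eq:primal_residual}, yielding the term $r_\mu^\pr(u_{\red,\mu})[d_{\mu_i}p_{\red,\mu}]$. Finally, by Remark~\ref{prop:gateaux_wrt_V}, $\partial_u\mathcal{L}(u,\mu,p)[v] = j_\mu(v) + 2k_\mu(v,u) - a_\mu(v,p) = r_\mu^\du(u,p)[v]$ by \eqref{eq:dual_residual}, yielding the term $r_\mu^\du(u_{\red,\mu},p_{\red,\mu})[d_{\mu_i}u_{\red,\mu}]$. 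Collecting the four contributions proves the claim.

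There is no genuine analytical obstacle here; the only care needed is the bookkeeping of explicit versus implicit $\mu$-dependence, in particular the implicit coupling of $p_{\red,\mu}$ to $u_{\red,\mu}$ through \eqref{eq:dual_solution_red}. As consistency checks one may observe that $r_\mu^\pr(u_{\red,\mu})$ vanishes on $V_\red^\pr$ by \eqref{eq:state_red} while $d_{\mu_i}p_{\red,\mu}\in V_\red^\du$, and $r_\mu^\du(u_{\red,\mu},p_{\red,\mu})$ vanishes on $V_\red^\du$ by \eqref{eq:dual_solution_red} while $d_{\mu_i}u_{\red,\mu}\in V_\red^\pr$, so the last two summands drop out precisely when $V_\red^\pr = V_\red^\du$, recovering Proposition~\ref{prop:grad_Jhat}; moreover the resulting expression must agree with the adjoint-based formula of Proposition~\ref{prop:true_corrected_reduced_gradient_adj}, which can be verified by testing \eqref{A2:z_eq} with $q = d_{\mu_i}p_{\red,\mu}$ and \eqref{A2:w_eq} with $v = d_{\mu_i}u_{\red,\mu}$ and invoking \eqref{eq:true_primal_reduced_sensitivity}--\eqref{eq:true_dual_reduced_sensitivity}.
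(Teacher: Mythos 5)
Your proposal is correct and follows exactly the route the paper intends: the paper's proof is the one-line remark that the claim ``follows from the chain rule and Remark~\ref{prop:gateaux_wrt_V}; see \cite[Section~1.6.1]{HPUU2009}'', and your argument is precisely that chain-rule computation on $\mathcal{L}(u_{\red,\mu},\mu,p_{\red,\mu})$ carried out in full, with the partial derivatives of $\mathcal{L}$ correctly identified with $r_\mu^\du$ and $r_\mu^\pr$. The additional justification of the reduced sensitivity equations and the consistency checks against Propositions~\ref{prop:grad_Jhat} and \ref{prop:true_corrected_reduced_gradient_adj} are sound and only make explicit what the paper leaves to the cited reference.
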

\begin{proof} 
	It follows from the chain rule and Remark~\ref{prop:gateaux_wrt_V}; see \cite[Section~1.6.1]{HPUU2009}.
\end{proof}

Note that the sensitivity based gradient is mathematically equivalent to the one in Propostion~\ref{prop:true_corrected_reduced_gradient_adj}, but the second only requires to solve \eqref{A2:z_eq} and \eqref{A2:w_eq} once,
because they can be reused for every component $\mu_i$, whereas the computation of the gradient in Propostion~\ref{prop:true_corrected_reduced_gradient} requires to solve \eqref{eq:true_primal_reduced_sensitivity}
and \eqref{eq:true_dual_reduced_sensitivity} for each $1\leq i\leq P$; cf.~\cite{HPUU2009}.

In terms of numerical approximation w.r.t.~the FOM functional, we note that, e.g., a solution $d_{\mu_i} u_{\red, \mu} \in V_\red^\pr$ of \eqref{eq:true_primal_reduced_sensitivity} does not necessarily need to be a good approximation of the FOM version $d_{\mu_i} u_{h, \mu} \in V_h$ even though $u_{h, \mu}$ is contained in $V_\red^\pr$ since the high dimensional sensitivities are not generally contained in the respective reduced space (c.f. Propostion~\ref{prop:primal_solution_dmui_error}).

To remedy this we could compute the FOM sensitivities for all canonical directions and either include them in the respective primal and dual space (thus forming Taylor RB spaces) or distribute all directional sensitivities to 
problem adapted RB spaces for the primal and dual sensitivities w.r.t.~all canonical directions: $V_\red^{\pr,d_{\mu_i}}, V_\red^{\du,d_{\mu_i}} \subset V_h$.
Thus, we again commit a variational crime.

\begin{definition}[Approximate partial derivatives of the RB primal and dual solution maps]
	\label{prop:red_sens}
	Considering the reduced primal and dual solution maps $\Params \to V_\red^\pr$, $\mu \mapsto u_{\red, \mu}$ and $\Params \to V_\red^\du$, $\mu \mapsto p_{\red, \mu}$,
	respectively, where $u_{\red, \mu}$ and $p_{\red, \mu}$ are the solutions of \eqref{eq:state_red} and \eqref{eq:dual_solution_red},
	we define their approximate partial derivatives w.r.t.~the $i$th component of $\mu$ by $\dred{\mu_i} u_{\red, \mu} \in V_\red^{\pr,d_{\mu_i}}$
	and $\dred{\mu_i} p_{\red, \mu} \in V_\red^{\du,d_{\mu_i}}$, respectively, as solutions of the sensitivity equations
	\begin{align}
	\bformd(\dred{\mu_i} u_{\red, \mu}, v_\red) &= \partial_\mu \resd^\pr(u_{\red, \mu})[v_\red] \cdot e_i&&\text{for all } v_\red \in V_\red^{\pr,d_{\mu_i}},
	\label{eq:red_sens_pr} \\
	\bformd(q_\red, \dred{\mu_i} p_{\red, \mu}) &= \partial_\mu \resd^\du(u_{\red, \mu}, p_{\red, \mu})[q_\red] \cdot e_i 
	+ 2\kformd(q_\red, \dred{\mu_i} u_{\red, \mu})&&\text{for all }q_\red \in V_\red^{\du,d_{\mu_i}}.
	\label{eq:red_sens_du}
	\end{align}
	Similarly, we denote the approximate partial derivatives in direction $\nu \in \mathbb{R}^P$ by $\dred{\nu} u_{\red, \mu}$ and $\dred{\nu} p_{\red, \mu}$,
	respectively, defined by substituting $e_i$ with $\nu$ above.
\end{definition}
Following Propositions \ref{prop:solution_dmu_eta} and \ref{prop:dual_solution_dmu_eta} we would obtain 
$\dred{\mu_i} u_{\red, \mu} = d_{\mu_i} u_{\red, \mu}$, if $V_\red^{\pr,d_{\mu_i}} =V_\red^{\pr}$ and 
$\dred{\mu_i} p_{\red, \mu} = d_{\mu_i} p_{\red, \mu}$, if $V_\red^{\du,d_{\mu_i}} =V_\red^{\du}$.
Moreover, the approximate partial derivatives depend on the choice of the corresponding reduced approximation spaces. 

\begin{definition}[Approximate gradient of the NCD-corrected RB reduced functional]
	\label{prop:red_approx_grad}
	We define the approximate gradient $\gradred{\mu}\cJhatn:\Params \to \R^P$ of $\cJhatn$ by
	\begin{align}
	\big(\gradred{\mu} \cJhatn(\mu)\big)_i :=  d_{\mu_i}\J(u_{\red,\mu}, \mu) &+ d_{\mu_i}r_\mu^\pr(u_{\red,\mu})[p_{\red,\mu}] 
	+ r_\mu^\pr(u_{\red, \mu})[\dred{\mu_i}p_{\red, \mu}] + r_\mu^\du(u_{\red, \mu},p_{\red, \mu})[\dred{\mu_i} u_{\red, \mu}], 
	\end{align}
	for $1 \leq i \leq P$, where $u_{\red,\mu} \in V_\red^\pr$, $p_{\red,\mu} \in V_\red^\du$ denote the reduced primal and dual solutions and 
	$\dred{\mu_i} u_{\red, \mu} \in V_\red^{\pr,d_{\mu_i}}$ and $\dred{\mu_i} p_{\red, \mu} \in V_\red^{\du,d_{\mu_i}}$ denote the solutions of \eqref{eq:red_sens_pr}
	and \eqref{eq:red_sens_du}. 
\end{definition}

Both gradients from Definition~\ref{prop:true_corrected_reduced_gradient} and Propostion~\ref{prop:red_approx_grad} yield higher order estimate.
To show this, we first derive  error estimates for the reduction error of the reduced sensitivities from \eqref{eq:true_primal_reduced_sensitivity} and
\eqref{eq:true_dual_reduced_sensitivity} as well as for \eqref{eq:red_sens_pr} and \eqref{eq:red_sens_du}. 
For $v_h \in V_h$, the residuals of the equation in Propostion~\ref{prop:solution_dmu_eta} and Propostion~\ref{prop:dual_solution_dmu_eta} for the canonical directions are respectively given by
\begin{align}
\resd^{\pr,d_{\mu_i}}(u_{h, \mu}, d_{\mu_i} u_{h, \mu})[v_h] &:= d_{\mu_i} \resd^\pr(u_{h, \mu})[v_h] - \bformd(d_{\mu_i} u_{h, \mu}, v_h),
\label{sens_res_pr}\\
\resd^{\du,d_{\mu_i}}(u_{h, \mu}, p_{h, \mu}, d_{\mu_i} u_{h, \mu}, d_{\mu_i} p_{h, \mu})[v_h] 
&:=  d_{\mu_i} \resd^\du(u_{h, \mu}, p_{h, \mu})[q_\red] + 2\kformd(q_\red, d_{\mu_i} u_{h, \mu}) - \bformd(q_h, d_{\mu_i} p_{h, \mu}) .
\label{sens_res_du}
\end{align}
\begin{proposition}[Residual based upper bound on the model reduction error of the sensitivity of the primal solution map]
	\label{prop:primal_solution_dmui_error}
	For $\mu \in \Params$ and $1 \leq i \leq P$, let $d_{\mu_i}u_{h, \mu} \in V_h$ be
	the solution of the discrete version of \eqref{eq:primal_sens} and $d_{\mu_i}u_{\red, \mu} \in V_\red^{\pr,d_{\mu_i}}$
	be the solution of \eqref{eq:true_primal_reduced_sensitivity}. We then have
	\begin{align}
	\|d_{\mu_i}u_{h, \mu} - d_{\mu_i}u_{\red, \mu}\| &\leq \Delta_{d_{\mu_i}\pr}(\mu) := \underline{\bformd}^{-1}\Big(\cont{d_{\mu_i} \bformd} \Delta_\pr(\mu) + \|\resd^{\pr,d_{\mu_i}}(u_{\red, \mu}, d_{\mu_i}u_{\red, \mu})\| \Big).
	\notag
	\end{align}
\end{proposition}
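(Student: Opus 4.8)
\emph{Proof strategy.} The plan is to reproduce the residual--coercivity argument behind Proposition~\ref{prop:primal_rom_error}, but now applied to the primal \emph{sensitivity} equation instead of the state equation. The only genuinely new ingredient is that the data on the right-hand side of the discrete counterpart of \eqref{eq:primal_sens} is $u_{h,\mu}$, whereas the data on the right-hand side of the reduced sensitivity equation \eqref{eq:true_primal_reduced_sensitivity} is $u_{\red,\mu}$; tracking this data perturbation is precisely what generates the term $\cont{d_{\mu_i}\bformd}\,\Delta_\pr(\mu)$ in the bound.

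Concretely, I would set $e := d_{\mu_i}u_{h,\mu} - d_{\mu_i}u_{\red,\mu}\in V_h$, start from $\underline{\bformd}\,\|e\|^2 \le \bformd(e,e)$ by coercivity, and split $\bformd(e,e) = \bformd(d_{\mu_i}u_{h,\mu}, e) - \bformd(d_{\mu_i}u_{\red,\mu}, e)$ using symmetry of $\bformd$. Since $e\in V_h$, the discrete version of \eqref{eq:primal_sens} gives $\bformd(d_{\mu_i}u_{h,\mu}, e) = \partial_\mu \resd^\pr(u_{h,\mu})[e]\cdot e_i$, while the definition \eqref{sens_res_pr} of the sensitivity residual yields $\bformd(d_{\mu_i}u_{\red,\mu}, e) = \partial_\mu \resd^\pr(u_{\red,\mu})[e]\cdot e_i - \resd^{\pr,d_{\mu_i}}(u_{\red,\mu}, d_{\mu_i}u_{\red,\mu})[e]$. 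Subtracting, $\underline{\bformd}\,\|e\|^2$ is bounded by $\big(\partial_\mu \resd^\pr(u_{h,\mu})[e] - \partial_\mu \resd^\pr(u_{\red,\mu})[e]\big)\cdot e_i + \resd^{\pr,d_{\mu_i}}(u_{\red,\mu}, d_{\mu_i}u_{\red,\mu})[e]$.

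Next, exploiting that $\resd^\pr(w)[v] = \lformd(v) - \bformd(w,v)$ is affine in $w$, the first (difference) term equals $-\,\partial_\mu\bformd(u_{h,\mu}-u_{\red,\mu}, e)\cdot e_i$, which by continuity is at most $\cont{d_{\mu_i}\bformd}\,\|u_{h,\mu}-u_{\red,\mu}\|\,\|e\|$ and hence, by Proposition~\ref{prop:primal_rom_error}, at most $\cont{d_{\mu_i}\bformd}\,\Delta_\pr(\mu)\,\|e\|$. The second term is bounded by $\|\resd^{\pr,d_{\mu_i}}(u_{\red,\mu}, d_{\mu_i}u_{\red,\mu})\|\,\|e\|$ by definition of the dual norm. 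Dividing through by $\underline{\bformd}\,\|e\|$ (the case $e=0$ being trivial) produces exactly $\Delta_{d_{\mu_i}\pr}(\mu)$.

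I do not expect a real obstacle: this is a textbook C\'ea/residual estimate. The only step that requires a little care is the one in the previous paragraph --- cleanly separating the genuine sensitivity residual from the data-perturbation contribution, and noting that it is the affinity of $\resd^\pr$ in its first argument that lets the two parameter-derivative terms collapse into $\partial_\mu\bformd(u_{h,\mu}-u_{\red,\mu},\cdot)\cdot e_i$, which is then controlled by the already established primal bound $\Delta_\pr(\mu)$.
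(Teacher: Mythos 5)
Your proposal is correct and follows essentially the same route as the paper's proof: coercivity applied to the error, insertion of the discrete sensitivity equation and the definition \eqref{sens_res_pr}, collapse of the two parameter-derivative residual terms into $-d_{\mu_i}\bformd(u_{h,\mu}-u_{\red,\mu},\cdot)$ via linearity of $\resd^\pr$ in its first argument, and continuity plus Proposition~\ref{prop:primal_rom_error} to finish. (The split $\bformd(e,e)=\bformd(d_{\mu_i}u_{h,\mu},e)-\bformd(d_{\mu_i}u_{\red,\mu},e)$ needs only bilinearity, not symmetry, but this is immaterial.)
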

\begin{proof}
	Using the shorthand $d_{\mu_i} e_{h, \mu}^\pr := d_{\mu_i}u_{h, \mu} - d_{\mu_i}u_{\red, \mu}$, we obtain
	\begin{align*}
	\underline{\bformd}\, \|d_{\mu_i} e_{h, \mu}^\pr\|^2 &\leq \bformd(d_{\mu_i} e_{h, \mu}^\pr, d_{\mu_i} e_{h, \mu}^\pr) 
	= \bformd(d_{\mu_i}u_{h, \mu}, d_{\mu_i} e_{h, \mu}^\pr) -\, \bformd(d_{\mu_i}u_{\red, \mu}, d_{\mu_i} e_{h, \mu}^\pr) \\
	&= d_{\mu_i} \resd^\pr(u_{h, \mu})[d_{\mu_i} e_{h, \mu}^\pr] - \bformd(d_{\mu_i}u_{\red, \mu}, d_{\mu_i} e_{h, \mu}^\pr) \\
	&= d_{\mu_i} \resd^\pr(u_{h, \mu})[d_{\mu_i} e_{h, \mu}^\pr] - d_{\mu_i} \resd^\pr(u_{\red, \mu})[d_{\mu_i} e_{h, \mu}^\pr] + d_{\mu_i} \resd^\pr(u_{\red, \mu})[d_{\mu_i} e_{h, \mu}^\pr] - \bformd(d_{\mu_i}u_{\red, \mu}, d_{\mu_i} e_{h, \mu}^\pr) \\	
	&= - d_{\mu_i} \bformd(e_{h, \mu}^\pr, d_{\mu_i} e_{h, \mu}^\pr) + \resd^{\pr,d_{\mu_i}}(u_{\red, \mu}, d_{\mu_i}u_{\red, \mu})[d_{\mu_i} e_{\red, \mu}^\pr] \\
	&\leq \cont{d_{\mu_i} \bformd} \;\|e_{h, \mu}^\pr\| \;\|d_{\mu_i} e_{h, \mu}^\pr\|  + \|\resd^{\pr,d_{\mu_i}}(u_{\red, \mu}, d_{\mu_i}u_{\red, \mu})\|\; \|d_{\mu_i} e_{h, \mu}^\pr\|
	\end{align*}
	using the coercivity of $\bformd$ in the first inequality, the definition $d_{\mu_i} e_{h, \mu}^\pr$ in the first equality,
	Propostion~\ref{prop:solution_dmu_eta} applied to $u_{h, \mu}$ in the second equality,
	the definition of the discrete sensitivity primal residual \eqref{sens_res_pr} in the third equality and the continuity of $d_{\mu_i} \bformd$ in the last inequality.
\end{proof}
We emphasize that the same result can be shown for $\dred{\mu_i} u_{\red, \mu}$ by replacing $d_{\mu_i} u_{\red, \mu}$ and using the equation \eqref{eq:red_sens_pr}
instead of \eqref{eq:true_primal_reduced_sensitivity}. We call the resulting error estimator $\Delta_{\dred{\mu_i}\pr}(\mu)$.

\begin{proposition}[Residual based upper bound on the model reduction error of the sensitivity of the dual solution map]
	\label{prop:dual_solution_dmui_error}
	For $\mu \in \Params$ and $1 \leq i \leq P$, let $d_{\mu_i}p_{h, \mu} \in V_h$ be
	the solution of the discrete version of \eqref{eq:dual_sens} and $d_{\mu_i}p_{\red, \mu} \in V_\red^{\pr,d_{\mu_i}}$
	be the solution of \eqref{eq:true_dual_reduced_sensitivity}. We then obtain
	\begin{align*}
	\|d_{\mu_i}p_{h, \mu} - d_{\mu_i}p_{\red, \mu}\| \leq \Delta_{ d_{\mu_i}\du}(\mu)\hspace*{5cm}\text{with}\\
	\Delta_{ d_{\mu_i}\du}(\mu) := \underline{\bformd}^{-1}\Big(
	2 \cont{d_{\mu_i} \kformd} \; \Delta_\pr(\mu) +  \cont{d_{\mu_i} \bformd} \; \Delta_\du(\mu) + 2 \cont{\kformd}  \; \Delta_{d_{\mu_i}pr}(\mu) 
	+ \| \resd^{\du,d_{\mu_i}}(u_{\red, \mu}, p_{\red, \mu}, d_{\mu_i}u_{\red, \mu}, d_{\mu_i}p_{\red, \mu}) \| \Big).
	\end{align*}
\end{proposition}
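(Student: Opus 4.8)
The plan is to follow the argument of Proposition~\ref{prop:primal_solution_dmui_error}, now exploiting the structure of the dual sensitivity equation \eqref{eq:dual_sens} and recycling the primal and dual error bounds already established. Write $e_{h,\mu}^\pr := u_{h,\mu} - u_{\red,\mu}$, $e_{h,\mu}^\du := p_{h,\mu} - p_{\red,\mu}$ and $d_{\mu_i} e_{h,\mu}^\du := d_{\mu_i}p_{h,\mu} - d_{\mu_i}p_{\red,\mu}$; if the latter vanishes the estimate is trivial, so assume $\|d_{\mu_i} e_{h,\mu}^\du\| > 0$. Starting from the coercivity of $\bformd$ on $V_h$ and splitting the second argument,
\[
\underline{\bformd}\,\|d_{\mu_i} e_{h,\mu}^\du\|^2 \leq \bformd\big(d_{\mu_i} e_{h,\mu}^\du, d_{\mu_i} e_{h,\mu}^\du\big) = \bformd\big(d_{\mu_i} e_{h,\mu}^\du, d_{\mu_i}p_{h,\mu}\big) - \bformd\big(d_{\mu_i} e_{h,\mu}^\du, d_{\mu_i}p_{\red,\mu}\big),
\]
I would insert the discrete version of \eqref{eq:dual_sens}, tested with $q := d_{\mu_i} e_{h,\mu}^\du \in V_h$, into the first term, turning it into $d_{\mu_i} r_\mu^\du(u_{h,\mu}, p_{h,\mu})[d_{\mu_i} e_{h,\mu}^\du] + 2\kformd(d_{\mu_i} e_{h,\mu}^\du, d_{\mu_i}u_{h,\mu})$.

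Next I would add and subtract the reduced counterparts $d_{\mu_i}r_\mu^\du(u_{\red,\mu}, p_{\red,\mu})[\,\cdot\,]$ and $2\kformd(\,\cdot\,, d_{\mu_i}u_{\red,\mu})$ so that the three terms built only from reduced data reassemble precisely the reduced dual sensitivity residual $r_\mu^{\du,d_{\mu_i}}(u_{\red,\mu}, p_{\red,\mu}, d_{\mu_i}u_{\red,\mu}, d_{\mu_i}p_{\red,\mu})[d_{\mu_i} e_{h,\mu}^\du]$ from \eqref{sens_res_du}. What remains are two difference terms: the difference of the $d_{\mu_i}r_\mu^\du$ contributions and $2\kformd(d_{\mu_i} e_{h,\mu}^\du,\, d_{\mu_i}u_{h,\mu} - d_{\mu_i}u_{\red,\mu})$. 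Using $r_\mu^\du(u,p)[q] = \jformd(q) + 2\kformd(q,u) - \bformd(q,p)$ and the fact that here $d_{\mu_i}r_\mu^\du(u,p)[\,\cdot\,]$ denotes the partial derivative with respect to the \emph{explicit} parameter dependence (holding $u$ and $p$ fixed), the $\jformd$ part cancels in the difference, leaving $2\,d_{\mu_i}\kformd(d_{\mu_i}e_{h,\mu}^\du, e_{h,\mu}^\pr) - d_{\mu_i}\bformd(d_{\mu_i}e_{h,\mu}^\du, e_{h,\mu}^\du)$.

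Finally I would bound the four pieces by Cauchy--Schwarz with the continuity constants $\cont{d_{\mu_i}\kformd}$, $\cont{d_{\mu_i}\bformd}$, $\cont{\kformd}$ and the residual norm $\|r_\mu^{\du,d_{\mu_i}}(\dots)\|$, factor out $\|d_{\mu_i} e_{h,\mu}^\du\|$, divide by it and by $\underline{\bformd}$, and replace $\|e_{h,\mu}^\pr\|$, $\|e_{h,\mu}^\du\|$ and $\|d_{\mu_i}u_{h,\mu} - d_{\mu_i}u_{\red,\mu}\|$ by the already proven bounds $\Delta_\pr(\mu)$ (Proposition~\ref{prop:primal_rom_error}), $\Delta_\du(\mu)$ (Proposition~\ref{prop:dual_rom_error}) and $\Delta_{d_{\mu_i}\pr}(\mu)$ (Proposition~\ref{prop:primal_solution_dmui_error}), which reproduces exactly $\Delta_{d_{\mu_i}\du}(\mu)$. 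The only step needing care is the telescoping of the $d_{\mu_i}r_\mu^\du$ terms, i.e.\ keeping the explicit versus implicit $\mu$-dependence apart so that the difference over $(u_{h,\mu},p_{h,\mu})$ versus $(u_{\red,\mu},p_{\red,\mu})$ collapses precisely onto the primal and dual errors; everything else is the routine coercivity/Cauchy--Schwarz chain. As in the remark after Proposition~\ref{prop:primal_solution_dmui_error}, the same argument applied via \eqref{eq:red_sens_du} and \eqref{eq:red_sens_pr} yields the analogous estimator for $\dred{\mu_i}p_{\red,\mu}$ with $\Delta_{d_{\mu_i}\pr}$ replaced by $\Delta_{\dred{\mu_i}\pr}$.
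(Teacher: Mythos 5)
Your proposal is correct and follows essentially the same route as the paper's proof: coercivity of $\bformd$, insertion of the discrete dual sensitivity equation, telescoping against the reduced data to isolate the reduced dual sensitivity residual \eqref{sens_res_du}, cancellation of the $d_{\mu_i}\jformd$ terms, and Cauchy--Schwarz followed by substitution of $\Delta_\pr$, $\Delta_\du$ and $\Delta_{d_{\mu_i}\pr}$. No gaps.
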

\begin{proof}
	Using the shorthand $d_{\mu_i} e_{h, \mu}^\du := d_{\mu_i}p_{h, \mu} - d_{\mu_i}p_{\red, \mu}$ and $e_{h, \mu}^\du := p_{h, \mu} - p_{\red, \mu}$, we obtain
	\begin{align*}
	\underline{\bformd}\;&\|d_{\mu_i} e_{h, \mu}^\du\|^2 \leq \bformd(d_{\mu_i} e_{h, \mu}^\du, d_{\mu_i} e_{h, \mu}^\du)
	= \underbrace{\bformd(d_{\mu_i} e_{h, \mu}^\du, d_{\mu_i}p_{h, \mu})}_{= d_{\mu_i} \resd^\du(u_{h, \mu}, p_{h, \mu})[d_{\mu_i} e_{h, \mu}^\du] 
		+ 2\kformd(d_{\mu_i} e_{h, \mu}^\du, d_{\mu_i}u_{h, \mu})} - \bformd(d_{\mu_i} e_{h, \mu}^\du, d_{\mu_i}p_{\red, \mu})
	\\
	&= d_{\mu_i} \resd^\du(u_{h, \mu}, p_{h, \mu})[d_{\mu_i} e_{h, \mu}^\du] + 2\kformd(d_{\mu_i} e_{h, \mu}^\du, d_{\mu_i}u_{h, \mu})
	- d_{\mu_i} \resd^\du(u_{\red, \mu}, p_{\red, \mu})[d_{\mu_i} e_{h, \mu}^\du] \\
	&\qquad - 2\kformd(d_{\mu_i} e_{h, \mu}^\du, d_{\mu_i}u_{\red, \mu})
	+ d_{\mu_i} \resd^\du(u_{\red, \mu}, p_{\red, \mu})[d_{\mu_i} e_{h, \mu}^\du] + 2\kformd(d_{\mu_i} e_{h, \mu}^\du, d_{\mu_i}u_{\red, \mu}) 
	 - \bformd(d_{\mu_i} e_{h, \mu}^\du, d_{\mu_i}p_{\red, \mu}) \\
	&= d_{\mu_i} \jformd(d_{\mu_i} e_{h, \mu}^\du) + 2 d_{\mu_i}\kformd(d_{\mu_i} e_{h, \mu}^\du, u_{h, \mu}) - d_{\mu_i}\bformd(d_{\mu_i} e_{h, \mu}^\du, p_h)
	 - d_{\mu_i}\jformd(d_{\mu_i} e_{h, \mu}^\du) + 2 d_{\mu_i}\kformd(d_{\mu_i} e_{h, \mu}^\du, u_{\red, \mu})\\
	&\qquad 
	- d_{\mu_i}\bformd(d_{\mu_i} e_{h, \mu}^\du, p_\red) +\! 2\kformd(d_{\mu_i} e_{h, \mu}^\du, d_{\mu_i}u_{h, \mu}) \!-\! 2\kformd(d_{\mu_i} e_{h, \mu}^\du, d_{\mu_i}u_{\red, \mu}) \\
	&\qquad + \resd^{\du,d_{\mu_i}}(u_{\red, \mu}, p_{\red, \mu}, d_{\mu_i}u_{\red, \mu}, d_{\mu_i}p_{\red, \mu})[d_{\mu_i} e_{h, \mu}^\du]	\\
	&= 2 d_{\mu_i}\kformd(d_{\mu_i} e_{h, \mu}^\du, e_{h, \mu}^\pr) - d_{\mu_i}\bformd(d_{\mu_i} e_{h, \mu}^\du, e_{h, \mu}^\du)
	 + 2\kformd(d_{\mu_i} e_{h, \mu}^\du, d_{\mu_i} e_{h, \mu}^\pr) \\
	&\qquad
	+ \resd^{\du,d_{\mu_i}}(u_{\red, \mu}, p_{\red, \mu}, d_{\mu_i}u_{\red, \mu}, d_{\mu_i}p_{\red, \mu})[d_{\mu_i} e_{h, \mu}^\du]\\
	&\leq \big(2 \cont{d_{\mu_i} \kformd} \; \|e_{h, \mu}^\pr\| +  \cont{d_{\mu_i} \bformd} \; \|e_{h, \mu}^\du\|  \big)\|d_{\mu_i} e_{h, \mu}^\du\| \\
	&\qquad + 2 \cont{\kformd}  \; \|d_{\mu_i} e_{h, \mu}^\pr\| \; \|d_{\mu_i} e_{h, \mu}^\du\|
	+ \| \resd^{\du,d_{\mu_i}}(u_{\red, \mu}, p_{\red, \mu}, d_{\mu_i}u_{\red, \mu}, d_{\mu_i}p_{\red, \mu}) \| \; \|d_{\mu_i} e_{h, \mu}^\du\|
	\end{align*}
	using the coercivity of $\bformd$ in the first inequality, the definition of $d_{\mu_i} e_{h, \mu}^\du$ in the first equality, Propostion~\ref{prop:dual_solution_dmu_eta} applied to $p_{h, \mu}$ in the second equality, the definition of the dual residual in \eqref{eq:dual_residual} in the third equality and continuity of all parts in the last inequality.
\end{proof}
Again, the same result holds for $\dred{\mu_i} p_{\red, \mu}$ if we replace $d_{\mu_i} p_{\red, \mu}$ and use \eqref{eq:red_sens_du} instead of
\eqref{eq:true_dual_reduced_sensitivity}. The resulting error estimator is then called $\Delta_{\dred{\mu_i}\du}(\mu)$.

Using the residual based a posteriori error estimates for the primal sensitivities, we are able to state two a posteriori error bounds on the model reduction error of
the true gradient and the approximated gradient of the NCD-corrected functional.  

\begin{proposition}[Upper bound on the model reduction error of the gradient of the reduced output -- sensitivity approach] 
	\label{prop:grad_Jhat_error_sens}
	\hfill
	\begin{enumerate}[(i)]
		\item [\emph{(i)}] For the gradient $\nabla_{\mu} \cJhatn(\mu)$ of the NCD-corrected RB reduced functional, computed with sensitivities
		according to Proposition~{\rm{\ref{prop:true_corrected_reduced_gradient}}}, we have
		\begin{align*}
      \big\|\nabla_\mu \Jhat_h(\mu) - \nabla_{\mu} \cJhatn(\mu)\big\|_2 &\leq \Delta_{\nabla\Jhat_\red}(\mu) 
    = \big\|\underline{\Delta_{\nabla\Jhat_\red}(\mu)}\big\|_2 \quad\quad\text{with} \\
      \big(\underline{\Delta_{\nabla\Jhat_\red}(\mu)}\big)_i := \cont{d_{\mu_i} \kformd} \, \left(\Delta_\pr(\mu)\right)^2 &+ \cont{\bformd} \, 
		\Delta_{d_{\mu_i}pr}(\mu) \,\Delta_\du(\mu)  + \| \resd^{\du,d_{\mu_i}}(u_{\red, \mu}, p_{\red, \mu}, d_{\mu_i}u_{\red, \mu}, d_{\mu_i}p_{\red, \mu}) \| \,
		\Delta_\pr(\mu).
		\end{align*}
		\item [\emph{(ii)}] Furthermore, we have for the approximate gradient from Definition~{\rm{\ref{prop:red_approx_grad}}}
		\begin{align*}
		\big\|\nabla_\mu \Jhat_h(\mu) - \gradred{\mu} \cJhatn(\mu)\big\|_2 &\leq \Delta_{\widetilde\nabla\Jhat_\red}(\mu) 
    = \big\|\underline{\Delta_{\widetilde\nabla\Jhat_\red}(\mu)}\big\|_2 \quad\quad\text{with} \\
    \big(\underline{\Delta_{\widetilde\nabla\Jhat_\red}(\mu)}\big)_i := \cont{d_{\mu_i} \kformd} \, \left(\Delta_\pr(\mu)\right)^2 &+ \cont{\bformd} \, 
		\Delta_{\dred{\mu_i}\pr}(\mu) \,\Delta_\du(\mu) + \| \resd^{\du,d_{\mu_i}}(u_{\red, \mu}, &p_{\red, \mu}, \dred{\mu_i}u_{\red, \mu}, \dred{\mu_i}p_{\red, \mu}) \| \,
		\Delta_\pr(\mu).
		\end{align*}
	\end{enumerate}
\end{proposition}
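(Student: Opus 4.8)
The plan is to reduce both parts to one algebraic identity for the $i$-th component of the gradient error and then estimate term by term. Write $e^\pr := u_{h,\mu}-u_{\red,\mu}$ and $e^\du := p_{h,\mu}-p_{\red,\mu}$, and note that $e^\pr$, $e^\du$, the truth sensitivity $d_{\mu_i}u_{h,\mu}$ and the reduced sensitivities $\dred{\mu_i}u_{\red,\mu},\dred{\mu_i}p_{\red,\mu}$ (respectively $d_{\mu_i}u_{\red,\mu},d_{\mu_i}p_{\red,\mu}$ in part~(i)) all lie in $V_h$. I start from the sensitivity form of the truth gradient: by Proposition~\ref{prop:grad_Jhat} on the truth level, and because $r_\mu^\pr(u_{h,\mu})$ and $r_\mu^\du(u_{h,\mu},p_{h,\mu})$ vanish on $V_h$, one may write $\big(\nabla_\mu\Jhat_h(\mu)\big)_i = d_{\mu_i}\mathcal{L}(u_{h,\mu},\mu,p_{h,\mu}) + r_\mu^\pr(u_{h,\mu})[d_{\mu_i}p_{h,\mu}] + r_\mu^\du(u_{h,\mu},p_{h,\mu})[d_{\mu_i}u_{h,\mu}]$, where the last two summands vanish but are kept for bookkeeping and $d_{\mu_i}\mathcal{L}(u,\mu,p)$ denotes the partial derivative of $\mathcal{L}(u,\mu,p)=\J(u,\mu)+r_\mu^\pr(u)[p]$ w.r.t.\ the explicit parameter dependence. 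Subtracting $\big(\gradred{\mu}\cJhatn(\mu)\big)_i$ from Definition~\ref{prop:red_approx_grad} --- respectively $\big(\nabla_\mu\cJhatn(\mu)\big)_i$ from Proposition~\ref{prop:true_corrected_reduced_gradient}, which has the same structure, for part~(i) --- and expanding the (bi-)linear parts via $k_\mu(u_{h,\mu},u_{h,\mu})-k_\mu(u_{\red,\mu},u_{\red,\mu}) = 2k_\mu(e^\pr,u_{\red,\mu})+k_\mu(e^\pr,e^\pr)$ together with the analogous identity for $a_\mu$, one obtains a sum of terms linear in $e^\pr$, $e^\du$ and the sensitivities.

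Two simplifications produce the collapse. First, since $r_\mu^\pr(u_{h,\mu})$ and $r_\mu^\du(u_{h,\mu},p_{h,\mu})$ vanish on all of $V_h$, the reduced residuals obey $r_\mu^\pr(u_{\red,\mu})[v] = a_\mu(e^\pr,v)$ and $r_\mu^\du(u_{\red,\mu},p_{\red,\mu})[v] = -2k_\mu(v,e^\pr)+a_\mu(v,e^\du)$ for every $v\in V_h$. Second, testing the discrete primal sensitivity equation \eqref{eq:primal_sens} (on $V_h$) with $e^\du\in V_h$ gives $d_{\mu_i}l_\mu(e^\du)-d_{\mu_i}a_\mu(u_{h,\mu},e^\du) = a_\mu(d_{\mu_i}u_{h,\mu},e^\du)$. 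Feeding both into the expansion, and recognising $d_{\mu_i}r_\mu^\du(u_{\red,\mu},p_{\red,\mu})[e^\pr] + 2k_\mu(e^\pr,\dred{\mu_i}u_{\red,\mu}) - a_\mu(e^\pr,\dred{\mu_i}p_{\red,\mu})$ as exactly $\resd^{\du,d_{\mu_i}}(u_{\red,\mu},p_{\red,\mu},\dred{\mu_i}u_{\red,\mu},\dred{\mu_i}p_{\red,\mu})[e^\pr]$ by~\eqref{sens_res_du}, everything collapses to
\begin{align*}
\big(\nabla_\mu\Jhat_h(\mu) - \gradred{\mu}\cJhatn(\mu)\big)_i
&= \resd^{\du,d_{\mu_i}}\big(u_{\red,\mu},p_{\red,\mu},\dred{\mu_i}u_{\red,\mu},\dred{\mu_i}p_{\red,\mu}\big)[e^\pr]\\
&\quad + a_\mu\big(d_{\mu_i}u_{h,\mu}-\dred{\mu_i}u_{\red,\mu},\,e^\du\big) + d_{\mu_i}k_\mu(e^\pr,e^\pr),
\end{align*}
and the same identity with $\dred{\mu_i}$ replaced by $d_{\mu_i}$ throughout for part~(i).

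It remains to bound the three right-hand terms by Cauchy--Schwarz and continuity, namely by $\|\resd^{\du,d_{\mu_i}}(\cdots)\|\,\|e^\pr\|$, $\cont{\bformd}\,\|d_{\mu_i}u_{h,\mu}-\dred{\mu_i}u_{\red,\mu}\|\,\|e^\du\|$ and $\cont{d_{\mu_i}\kformd}\,\|e^\pr\|^2$, and then to use $\|e^\pr\|\le\Delta_\pr(\mu)$, $\|e^\du\|\le\Delta_\du(\mu)$ and $\|d_{\mu_i}u_{h,\mu}-\dred{\mu_i}u_{\red,\mu}\|\le\Delta_{\dred{\mu_i}\pr}(\mu)$ from Propositions~\ref{prop:primal_rom_error}, \ref{prop:dual_rom_error} and the remark following Proposition~\ref{prop:primal_solution_dmui_error} (using $\Delta_{d_{\mu_i}pr}(\mu)$ from Proposition~\ref{prop:primal_solution_dmui_error} itself in part~(i)). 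This gives the componentwise estimate $\big(\underline{\Delta_{\widetilde\nabla\Jhat_\red}(\mu)}\big)_i$ (resp.\ $\big(\underline{\Delta_{\nabla\Jhat_\red}(\mu)}\big)_i$), and passing to the Euclidean norm over $i=1,\dots,P$ finishes the proof. I expect the only real difficulty to be the bookkeeping behind the collapse: keeping the partial $\mu$-derivatives of $j_\mu,k_\mu,l_\mu,a_\mu$ separate from the sensitivity contributions, and checking that the residual terms not yet accounted for combine precisely into $a_\mu(d_{\mu_i}u_{h,\mu}-\dred{\mu_i}u_{\red,\mu},e^\du)$ via the primal sensitivity equation.
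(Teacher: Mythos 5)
Your proof is correct and follows essentially the same route as the paper: both derive the exact error representation $\big(\nabla_\mu \Jhat_h(\mu) - \nabla_\mu \cJhatn(\mu)\big)_i = d_{\mu_i}k_\mu(e^\pr,e^\pr) + a_\mu(d_{\mu_i}u_{h,\mu}-d_{\mu_i}u_{\red,\mu},\,e^\du) + \resd^{\du,d_{\mu_i}}(u_{\red,\mu},p_{\red,\mu},d_{\mu_i}u_{\red,\mu},d_{\mu_i}p_{\red,\mu})[e^\pr]$ by exploiting the vanishing of the truth residuals on $V_h$, the resulting identities $r_\mu^\pr(u_{\red,\mu})[v]=a_\mu(e^\pr,v)$ and $r_\mu^\du(u_{\red,\mu},p_{\red,\mu})[v]=-2k_\mu(v,e^\pr)+a_\mu(v,e^\du)$, and the discrete primal sensitivity equation tested with $e^\du$, before concluding with Cauchy--Schwarz and the established bounds. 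Your bookkeeping via the Lagrangian and the vanishing residual terms is a slightly tidier organization of the same cancellation the paper carries out through its $(*)$--$(****)$ terms, and part (ii) follows by the identical substitution of the approximate sensitivities.
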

\begin{proof}
(i) To prove the first assertion, we use $r_\mu^\pr(u_{h, \mu})[d_{\mu_i}p_{\red, \mu}] = 0$ and $r_\mu^\du(u_{h, \mu},p_{h, \mu})[d_{\mu_i} u_{\red, \mu}]=0$ to obtain
		\begin{align*}
		\big(\nabla_\mu \Jhat_h(\mu) &- \nabla_\mu \cJhatn(\mu)\big)_i 
		=d_{\mu_i}\J(u_{h, \mu}, \mu) - d_{\mu_i}\J(u_{\red, \mu}, \mu) + d_{\mu_i}r_\mu^\pr(u_{h,\mu})[p_{h,\mu}] - d_{\mu_i}r_\mu^\pr(u_{\red,\mu})[p_{\red,\mu}]  \\
		&= \partial_\mu \jformd(e_{h, \mu}^\pr) + \partial_\mu \kformd(u_{h, \mu}, u_{h, \mu}) - \partial_\mu \kformd(u_{\red, \mu},u_{\red, \mu})  
		 + d_{\mu_i}r_\mu^\pr(u_{h,\mu})[p_{h,\mu}] - d_{\mu_i}r_\mu^\pr(u_{\red,\mu})[p_{\red,\mu}] \\
		&\qquad + \underset{=(*)}{\underbrace{r_\mu^\pr(e_{h, \mu}^\pr)[d_{\mu_i}p_{\red, \mu}]}} 
		+ \underset{=(**)}{\underbrace{r_\mu^\du(e_{h, \mu}^\pr,e_{h, \mu}^\du)[d_{\mu_i} u_{\red, \mu}]}}.
		\end{align*}
		For the last two residual terms we have
		\begin{align*}
		(*) &= \lformd(d_{\mu_i}p_{\red, \mu}) - \lformd(d_{\mu_i}p_{\red, \mu}) 
		- \bformd(e_{h, \mu}^\pr,d_{\mu_i}p_{\red, \mu}) \\ 
		&= - \bformd(e_{h, \mu}^\pr,d_{\mu_i}p_{\red, \mu}) 
		+ d_{\mu_i} r_\mu^\du(u_{\red, \mu},p_{\red, \mu})[e_{h, \mu}^\pr] 
		+ 2\kformd(d_{\mu_i}u_{\red, \mu}, e_{h, \mu}^\pr) 
		- d_{\mu_i} r_\mu^\du(u_{\red, \mu},p_{\red, \mu})[e_{h, \mu}^\pr] 
		- 2\kformd(d_{\mu_i}u_{\red, \mu}, e_{h, \mu}^\pr) \\
		&= \resd^{\du,d_{\mu_i}}(u_{\red, \mu}, p_{\red, \mu}, d_{\mu_i}u_{\red, \mu}, d_{\mu_i}p_{\red, \mu})[e_{h, \mu}^\pr] 
		- d_{\mu_i} r_\mu^\du(u_{\red, \mu},p_{\red, \mu})[e_{h, \mu}^\pr] 
		- 2\kformd(d_{\mu_i}u_{\red, \mu}, e_{h, \mu}^\pr) 
		\end{align*}
		and
		\begin{align*}
		(**) &= \jformd(d_{\mu_i}u_{\red, \mu}) - \jformd(d_{\mu_i}u_{\red, \mu}) 
		+ 2\kformd(d_{\mu_i}u_{\red, \mu}, e_{h, \mu}^\pr) - \bformd(d_{\mu_i}u_{\red, \mu}, e_{h, \mu}^\du). 
		\end{align*}
		Thus, by summing both terms we have
		\begin{align*}
		(*) + (**)= 
		\resd^{\du,d_{\mu_i}}(u_{\red, \mu}, p_{\red, \mu},d_{\mu_i}u_{\red, \mu}, d_{\mu_i}p_{\red, \mu})[e_{h, \mu}^\pr] 
		- \underset{=(***)}{\underbrace{d_{\mu_i} r_\mu^\du(u_{\red, \mu},p_{\red, \mu})[e_{h, \mu}^\pr]}} - \bformd(d_{\mu_i}u_{\red, \mu}, e_{h, \mu}^\du) 
		\end{align*}
		and for $(***)$ it holds
		\begin{align*}
		(***) = d_{\mu_i}\jformd(e_{h, \mu}^\pr) + 2 d_{\mu_i}\kformd(e_{h, \mu}^\pr, u_{\red,\mu}) - d_{\mu_i}\bformd(e_{h, \mu}^\pr,p_{\red,\mu}).
		\end{align*}
		Combining $(*)$, $(**)$ and $(***)$ with the previous result, we have
		\begin{align*}
		\big(\nabla_\mu \Jhat_h(\mu) &- \nabla_\mu \cJhatn(\mu)\big)_i 
		 = \partial_\mu \jformd(e_{h, \mu}^\pr) + \partial_\mu \kformd(u_{h, \mu}, u_{h, \mu}) - \partial_\mu \kformd(u_{\red, \mu},u_{\red, \mu})  \\
		&\qquad - d_{\mu_i}\jformd(e_{h, \mu}^\pr) - 2 d_{\mu_i}\kformd(e_{h, \mu}^\pr, u_{\red,\mu}) + d_{\mu_i}\bformd(e_{h, \mu}^\pr,p_{\red,\mu}) + d_{\mu_i}r_\mu^\pr(u_{h,\mu})[p_{h,\mu}]  \\
		&\qquad - d_{\mu_i}r_\mu^\pr(u_{\red,\mu})[p_{\red,\mu}] 
		 +  \resd^{\du,d_{\mu_i}}(u_{\red, \mu}, p_{\red, \mu},d_{\mu_i}u_{\red, \mu}, d_{\mu_i}p_{\red, \mu})[e_{h, \mu}^\pr] 
		- \bformd(d_{\mu_i}u_{\red, \mu}, e_{h, \mu}^\du)  \\
		& = d_{\mu_i} \kformd(e_{h, \mu}^\pr, e_{h, \mu}^\pr) 
		+  \resd^{\du,d_{\mu_i}}(u_{\red, \mu}, p_{\red, \mu},d_{\mu_i}u_{\red, \mu}, d_{\mu_i}p_{\red, \mu})[e_{h, \mu}^\pr] \\
		& \qquad + \underset{=(****)}{\underbrace{d_{\mu_i}r_\mu^\pr(u_{h,\mu})[p_{h,\mu}] - d_{\mu_i}r_\mu^\pr(u_{\red,\mu})[p_{\red,\mu}] 
				+  d_{\mu_i}\bformd(e_{h, \mu}^\pr,p_{\red,\mu}) -  \bformd(d_{\mu_i}u_{\red, \mu}, e_{h, \mu}^\du)}}.
		\end{align*}
		Further, we have
		\begin{align*}
		d_{\mu_i}r_\mu^\pr(u_{h,\mu})[p_{h,\mu}] &- d_{\mu_i}r_\mu^\pr(u_{\red,\mu})[p_{\red,\mu}] = d_{\mu_i} \lformd(e_{h, \mu}^\du) - 
		d_{\mu_i} \bformd(u_{h, \mu}, p_{h, \mu}) + d_{\mu_i} \bformd(u_{\red, \mu}, p_{\red, \mu}) \\
		& =  \bformd(d_{\mu_i}u_h, e_{h, \mu}^\du) + d_{\mu_i} \bformd(u_h, e_{h, \mu}^\du) 
		 - d_{\mu_i} \bformd(u_{h, \mu}, p_{h, \mu}) + d_{\mu_i} \bformd(u_{\red, \mu}, p_{\red, \mu}),
		\end{align*}
		where we used the discretized version of \eqref{eq:primal_sens} in the second equality. Inserting this into $(*)$ gives
		\begin{align*}
		(***\,*) =&  \bformd(d_{\mu_i}u_h, e_{h, \mu}^\du) -  \bformd(d_{\mu_i}u_{\red, \mu}, e_{h, \mu}^\du) \\ & +  
		\underset{=0}{\underbrace{d_{\mu_i} \bformd(u_h, e_{h, \mu}^\du) - d_{\mu_i} \bformd(u_{h, \mu}, p_{h, \mu}) 
				+ d_{\mu_i} \bformd(u_{\red, \mu}, p_{\red, \mu}) 
				+d_{\mu_i}\bformd(e_{h, \mu}^\pr,p_{\red,\mu}) }} 
		= \bformd(d_{\mu_i} e_{h, \mu}^\pr, e_{h, \mu}^\du).
		\end{align*}
		In total, we have 
		\begin{align*}
		& &\big(\nabla_\mu \Jhat_h(\mu) - \nabla_\mu \cJhatn(\mu)\big)_i 
		 = d_{\mu_i} \kformd(e_{h, \mu}^\pr, e_{h, \mu}^\pr) 
		+ \bformd(d_{\mu_i} e_{h, \mu}^\pr, e_{h, \mu}^\du) +  \resd^{\du,d_{\mu_i}}(u_{\red, \mu}, p_{\red, \mu},d_{\mu_i}u_{\red, \mu}, d_{\mu_i}p_{\red, \mu})[e_{h, \mu}^\pr] 
		\end{align*}
		which proofs the assertion. \\
(ii) The estimate follows analogously to (i), by replacing $d_{\mu_i} u_{\red, \mu}$ and $d_{\mu_i} p_{\red, \mu}$ with $\dred{\mu_i} u_{\red, \mu}$
		and $\dred{\mu_i} p_{\red, \mu}$, respectively.
\end{proof}

To conclude, $\Delta_{\widetilde\nabla\Jhat_\red}(\mu)$ and $\Delta_{\nabla\Jhat_\red}(\mu)$ both decay with second order (cf.~Section~\ref{sec:estimator_study}).
We also point out, that $\Delta_{\widetilde\nabla\Jhat_\red}(\mu)$ is an improved estimator which can be used to replace the poor estimator
$\Delta^{\red,*}_{\nabla_\mu \Jhat_\red}(\mu)$.
However both higher order estimators $\Delta_{\widetilde\nabla\Jhat_\red}(\mu)$ and $\Delta_{\nabla\Jhat_\red}(\mu)$
come with the price of computing the dual norm of the sensitivity residuals in \eqref{sens_res_pr} and \eqref{sens_res_du} for each direction
which aggravates the computational complexity. 

\section{The Trust-Region Method and adaptive enrichment strategies}
\label{sec:TRRB_and_adaptiveenrichment}
To solve problem \eqref{P} we apply the TR method, which iteratively computes a first-order critical point of \eqref{P}. At each iteration $k\geq 0$, we consider a so-called model function $m^{(k)}$, which is a cheaply computable approximation of the quadratic cost functional $\J$ in a neighbourhood of the parameter $\mu^{(k)}$, i.e., the Trust-Region. Therefore, for $k\geq 0$, given a TR radius $\delta^{(k)}$, we consider the TR minimization sub-problem
\begin{equation}
\label{TRsubprob}
\min_{s\in \mathbb{R}^P} m^{(k)}(s) \, \text{ subject to } \|s\|_2 \leq \delta^{(k)},\, \widetilde{\mu}:= \mu^{(k)}+s \in\Params \text{ and } r_{\tilde{\mu}}^\pr(u_{\tilde{\mu}})[v]= 0 \, \text{ for all }  v\in V.
\end{equation}
Under suitable assumptions on $m^{(k)}$, problem \eqref{TRsubprob} admits a unique solution $\bar s^{(k)}$, which is used to compute the next iterate $\mu^{(k+1)} = \mu^{(k)} + \bar s^{(k)}$.
\subsection{The Trust-Region Reduced Basis Method}
\label{sec:TR}
Slightly different from \cite{AFS00,QGVW2017}, we choose as model function the NCD-corrected RB reduced functional $\cJhatn^{(k)}$ defined in (\ref{eq:Jhat_red_corected}), i.e.~$m^{(k)}(\cdot)= \cJhatn^{(k)}(\mu^{(k)}+\cdot)$ for $k\geq 0$, where the super-index $(k)$ indicates that we use different RB spaces $V_\red^{*, (k)}$ in each iteration. As indicated in Proposition~\ref{prop:Jhat_error} and shown in our numerical experiments below, $\cJhatn^{(k)}$ converges to $\Jhat$ with higher order 
in comparison to the standard RB reduced functional from \eqref{eq:Jhat_red}, which has been considered in \cite{QGVW2017}.
We initialize the RB spaces using the initial guess $\mu^{(0)}$, i.e.~setting $V^{\pr,0}_\red = \left\{u_{h,\mu^{(0)}}\right\}$ and $V^{\du,0}_\red = \left\{p_{h,\mu^{(0)}}\right\}$. At every iteration $k$ we may -- depending on the a posteriori estimates -- enrich the obtained space using the computed parameter $\mu^{(k+1)}$; for further details see Section~\ref{sec:construct_RB}. Possible sufficient and necessary conditions for convergence, dependent on the approximate generalized Cauchy point (AGC) $\mu^{(k)}_\text{\rm{AGC}}$ (see Definition~\ref{Def:AGC}), are given in \cite{QGVW2017}. In contrast to \cite{QGVW2017}, we consider additional bilateral parameter constraints in \eqref{TRsubprob}. In particular, the presence of these inequality constraints requires a review of the proof of convergence for the TR-RB algorithm. In \cite{QGVW2017}, the convergence is based on the results contained in \cite{YM2013}, where the authors consider an equality-constrained optimization problem. We state first how our method differs from the one in \cite{QGVW2017}, then we prove the convergence of this modified algorithm. According to \cite{QGVW2017}, the inexact RB version of problem \eqref{TRsubprob} is 
\begin{equation}
\label{TRRBsubprob}
\min_{\widetilde{\mu}\in\Params} \cJhatn^{(k)}(\widetilde{\mu}) \text{ s.t. }  \frac{\Delta_{\cJhatn}(\widetilde{\mu})}{\cJhatn^{(k)}(\widetilde{\mu})}\leq \delta^{(k)},
\end{equation}
where $\widetilde{\mu}:= \mu^{(k)}+s$, the equality constraint $r_{\tilde\mu}^\pr(u_{\tilde{\mu}})[v]= 0$ is hidden in the definition of $\cJhatn$ and the inequality ones are concealed in the request $\widetilde{\mu}\in \Params$. As also remarked in \cite{QGVW2017}, the projected BFGS method \cite{Kel99}, which we use in order to solve \eqref{TRRBsubprob}, computes the ACG point $\mu^{(k)}_\text{\rm{AGC}}$ in the first iterate and generates a sequence $\{\mu^{(k,\ell)}\}_{\ell=1}^L$ where $L$ is the last BFGS iteration. In what follows, $\mu^{(k,1)}:= \mu^{(k)}_\text{\rm{AGC}}$ and the TR iterate $\mu^{(k+1)}:=\mu^{(k,L)}$. Throughout the paper the index $k$ refers to the current outer TR iteration, $\ell$ refers instead to the inner BFGS iteration. Note that $L$ may be different for each iteration $k$, but we will indicate it only when strictly necessary in order to simplify the notation. To describe the projected BFGS method in details, we define
\begin{align}
\label{eq:General_Opt_Step_point}
\mu^{(k,\ell)}(j):= \Proj_\mathcal{P}(\mu^{(k,\ell)} + \kappa^j d^{(k,\ell)}) \in\Params && \text{for } j\geq 0,
\end{align}
where $\kappa\in(0,1)$, $d^{(k,\ell)}\in\mathbb{R}^P$ is the chosen descent direction at the iteration $(k,\ell)$ and the projection operator $\Proj_\Params: \mathbb{R}^P\rightarrow \Params$ is defined as
\begin{align*}
(\Proj_\Params(\mu))_i:= \left\{ \begin{array}{ll}
(\mu_\mathsf{a})_i & \text{if } (\mu)_i\leq (\mu_\mathsf{a})_i, \\
(\mu_\mathsf{b})_i & \text{if } (\mu)_i\geq (\mu_\mathsf{b})_i, \\
(\mu)_i & \text{otherwise,}
\end{array} \right. 
&& \text{for } i=1,\ldots,P.
\end{align*}
Note that the operator $\Proj_\Params$ is Lipschitz continuous with constant one; cf.~\cite{Kel99}. For computing the descent direction $d^{(k,\ell)}$ we follow the projected BFGS algorithm reported in \cite[Section~5.5.3]{Kel99}. Furthermore, we enforce respectively an Armijo-type condition and the additional TR constraint on $\cJhatn^{(k)}$
\begin{align}
\label{Armijo}\cJhatn^{(k)}(\mu^{(k,\ell)}(j)) - \cJhatn^{(k)}(\mu^{(k,\ell)}) & \leq  -\frac{\kappa_{\mathsf{arm}}}{\kappa^j} \big\| \mu^{(k,\ell)}(j)-\mu^{(k,\ell)}\big\|^2_2, \\
\label{TR_radius_condition} \frac{\Delta_{\cJhatn}(\mu^{(k,\ell)}(j))}{\cJhatn^{(k)}(\mu^{(k,\ell)}(j))} & \leq \delta^{(k)},
\end{align}
by selecting $\mu^{(k,\ell+1)} = \mu^{(k,\ell)}(j^{(k,\ell)})$ for $\ell\geq 1$, where $j^{(k,\ell)}<\infty$ is the smallest index for which the conditions \eqref{Armijo}-\eqref{TR_radius_condition} hold for some $\kappa_{\mathsf{arm}}\in(0,\frac{1}{2})$, generally $\kappa_{\mathsf{arm}}=10^{-4}$; cf.~\cite{QGVW2017}. Moreover, we use as termination criteria for the optimization sub-problem\newline
\begin{subequations}\label{Termination_crit_subproblem}
	\noindent\begin{minipage}{0.6\textwidth}
		\begin{equation}
		\label{FOC_subproblem}
		\big\|\mu^{(k,\ell)}-\Proj_\Params(\mu^{(k,\ell)}-\nabla_\mu \Jhat_\red^{(k)}(\mu^{(k,\ell)}))\big\|_2\leq \tau_\text{\rm{sub}}
		\hspace{2em} \text{or}
		\end{equation}
	\end{minipage}%
	\begin{minipage}{0.4\textwidth}
		\vspace{0.2em}
		\begin{equation}
		\label{Cut_of_TR}
		\beta_2\delta^{(k)} \leq \frac{\Delta_{\Jhat_\red^{(k)}}(\mu)}{\Jhat_\red^{(k)}(\mu)} \leq \delta^{(k)},
		\end{equation}
	\end{minipage}\vskip1em
\end{subequations}
\noindent where $\tau_\text{\rm{sub}}\in(0,1)$ is a predefined tolerance and $\beta_2\in(0,1)$, generally close to one. Condition \eqref{Cut_of_TR} is used to prevent that the optimizer spends much time close to the boundary of the Trust-Region, where the model is poor in approximation; cf.~\cite{QGVW2017}. Note that, without the projection operator $\Proj_\Params$, conditions \eqref{Armijo}-\eqref{Termination_crit_subproblem} coincide with the ones in \cite{QGVW2017}, apart from using the NCD-corrected RB reduced functional. Furthermore, in addition to \cite{QGVW2017}, we consider a condition which allows enlarging the TR radius. A drawback of the TR algorithm proposed in \cite{QGVW2017} is that the TR radius may be significantly shrunk at the beginning, i.e.~when the TR model is poor in approximation. Afterwards, even if the RB space is enriched, i.e.~the approximation of the TR model function is improved, the TR radius is kept small. Thus, one misses the local second-order rate of convergence of the BFGS method. More precisely, if $\mu^{(k,\ell)}$ is close to the locally optimal solution $\bar \mu^{(k)}$ of the TR sub-problem, we want to make full BFGS steps, which gives us faster convergence. The possibility to enlarge the TR radius at each iteration will also decrease the number of outer iterations needed to converge. As a condition for enlarging the radius we check whether the sufficient reduction predicted by the model function $\cJhatn^{(k)}$ is realized by the objective function, i.e.~we check if
\begin{equation}
\label{TR_act_decrease}
\varrho^{(k)}:= \frac{\Jhat_h(\mu^{(k)})-\Jhat_h(\mu^{(k+1)})}{\cJhatn^{(k)}(\mu^{(k)})-\cJhatn^{(k)}(\mu^{(k+1)})}  \geq  \eta_\varrho
\end{equation}
for a tolerance $\eta_\varrho \in [3/4,1)$. Condition \eqref{TR_act_decrease} seems costly because of the evaluation of the FOM cost functional $\Jhat_h$, but, after the
enrichment of the RB space, the quantities in the numerator of \eqref{TR_act_decrease} are cheaply accessible, since one has already
solved the FOM to generate the new snapshots for the RB space enrichment. Note that this also implies that we can cheaply evaluate the FOM gradient $\nabla_\mu \Jhat_h(\mu^{k+1})$ in case of an enrichment. This knowledge will be used for the stopping criterium in the outer loop of the algorithm. Finally, let us define the AGC point for our constrained case.
\begin{definition}[AGC point for simple bounds]
	\label{Def:AGC}
	At iteration $k$, we define the AGC point as
	\[
	\mu_\text{\rm{AGC}}^{(k)}:= \mu^{(k,0)}(j^{(k)}_c)=  \Proj_\mathcal{P}(\mu^{(k,0)} + \kappa^{j^{(k)}_c} d^{(k,0)}),
	\]
	where $\mu^{(k,0)}:= \mu^{(k)}$, $d^{(k,0)}:= -\nabla_\mu \cJhatn^{(k)}(\mu^{(k,0)})$ and $j^{(k)}_c$ is the smallest non-negative integer $j$ for which $\mu^{(k,0)}(j)$ satisfies \eqref{Armijo}-\eqref{TR_radius_condition} for $\ell=0$.
\end{definition}
We refer to Algorithm~\ref{Alg:TR-RBmethod} for the proposed TR-RB algorithm.
\begin{algorithm2e}[!h]
	\KwData{Initial TR radius $\delta^{(0)}$, TR shrinking factor $\beta_1\in (0,1)$, tolerance for enlarging the TR radius $\eta_\varrho\in [\frac{3}{4},1)$, initial parameter $\mu^{(0)}$, stopping tolerance for the sub-problem $\tau_\text{\rm{sub}}\ll 1$, stopping tolerance for the first-order critical condition $\tau_\text{\rm{FOC}}$ with $\tau_\text{\rm{sub}}\leq\tau_\text{\rm{FOC}}\ll 1$, safeguard for TR boundary $\beta_2\in(0,1)$.}
	
	Set $k=0$ and \textsf{Loop\_flag}$=$\textsf{True}\; 
	\While{ 
		{\normalfont\textsf{Loop\_flag}}}{
		 Compute $\mu^{(k+1)}$ as solution of \eqref{TRRBsubprob} with termination criteria \eqref{Termination_crit_subproblem}\label{TRRB-optstep}\;
		\uIf{\label{Suff_condition_TRRB}$\cJhatn^{(k)}(\mu^{(k+1)})+\Delta_{\Jhat_\red^{(k)}}(\mu^{(k+1)})<\cJhatn^{(k)}(\mu^{(k)}_\text{\rm{AGC}})$} {
			Accept $\mu^{(k+1)}$, 
				update the RB model at $\mu^{(k+1)}$ and compute $\varrho^{(k)}$ from \eqref{TR_act_decrease}\; 
				\eIf {$\varrho^{(k)}\geq\eta_\varrho$}  {
					Enlarge the TR radius $\delta^{(k+1)} = \beta_1^{-1}\delta^{(k)}$\;	
				}
			    {
					Set $\delta^{(k+1)}=\delta^{(k)}$\;
				}
				
		}
		\uElseIf {\label{Nec_condition_TRRB}$\cJhatn^{(k)}(\mu^{(k+1)})-\Delta_{\Jhat_\red^{(k)}}(\mu^{(k+1)})> \cJhatn^{(k)}(\mu^{(k)}_\text{\rm{AGC}})$} {
			Reject $\mu^{(k+1)}$, shrink the TR radius $\delta^{(k+1)} = \beta_1\delta^{(k)}$ and go to \ref{TRRB-optstep}\;
		}
		\Else {
				Update the RB model at $\mu^{(k+1)}$ and compute $\varrho^{(k)}$ from \eqref{TR_act_decrease}\; 
			\eIf {$\cJhatn^{(k+1)}(\mu^{(k+1)})\leq \cJhatn^{(k)}(\mu^{(k)}_\text{\rm{AGC}})$} {
				Accept $\mu^{(k+1)}$\;	
				\eIf {$\varrho^{(k)}\geq\eta_\varrho$} {
					Enlarge the TR radius $\delta^{(k+1)} = \beta_1^{-1}\delta^{(k)}$\;	
				}
				{
				Set $\delta^{(k+1)}=\delta^{(k)}$\;
				}		
			}{
				Reject $\mu^{(k+1)}$, shrink the TR radius $\delta^{(k+1)} = \beta_1\delta^{(k)}$ and go to \ref{TRRB-optstep}\;		
			}
		}
		\If { $\|\mu^{(k+1)}-P_\Params(\mu^{(k+1)}-\nabla_\mu \Jhat_h(\mu^{(k+1)}))\|_2\leq \tau_{\text{\rm{FOC}}}$ } {
			Set \textsf{Loop\_flag}$=$\textsf{False}\;
		} 
		Set $k=k+1$\;

	}
	\caption{TR-RB algorithm} 
	\label{Alg:TR-RBmethod}
\end{algorithm2e}

\subsection{Convergence analysis}
\label{sec:TR_convergence_analysis}
In order to guarantee the well-posedness (because of \eqref{TR_radius_condition}) and the convergence of the method, we make the following assumption
\begin{assumption}
\label{asmpt:bound_J}
The cost functional $\J(u,\mu)$ is strictly positive for all $u\in V$ and all parameters $\mu\in\Params$. 
\end{assumption}
Note that this assumption is not too restrictive, since the boundedness from below is a standard assumption in optimization to guarantee the existence of a solution for the minimization problem. If a global lower bound for the cost functional is also known, one can add a sufficiently large constant, without changing the position of its local minima and maxima. Another important request, pointed out in \cite{QGVW2017,YM2013}, is that an error-aware sufficient decrease condition
\begin{align}
\label{Suff_decrease_condition}
\cJhatn^{(k+1)}(\mu^{(k+1)})\leq \cJhatn^{(k)}(\mu_\text{\rm{AGC}}^{(k)}) && \text{ for all } k\in\mathbb{N}
\end{align}
is fulfilled at each iteration $k$ of the TR-RB algorithm. As in \cite{QGVW2017,YM2013}, we consider cheaply computable sufficient and necessary conditions for \eqref{Suff_decrease_condition} in Algorithm~\ref{Alg:TR-RBmethod} (Step~\ref{Suff_condition_TRRB} and Step~\ref{Nec_condition_TRRB}, respectively). The TR-RB algorithm rejects, then, any computed point which does not satisfy \eqref{Suff_decrease_condition}. One may be concerned of the fact that Algorithm~\ref{Alg:TR-RBmethod} may be trapped in an infinite loop where every computed point is rejected and the TR radius is shrunk all time. We point out that this never happened in our numerical tests. Anyway, we consider a safety termination criteria, which is triggered when the TR radius is smaller than the double machine precision. To prove convergence of Algorithm~\ref{Alg:TR-RBmethod}, in what follows, we then assume that this situation can not occur.
\begin{assumption}
\label{asmpt:rejection}
For each $k\geq 0$, there exists a radius $\widetilde{\delta}^{(k)}>\tau_{\rm{\text{mac}}}$ for which a solution of \eqref{TRRBsubprob} is such that $\eqref{Suff_decrease_condition}$ is verified, where $\tau_{\rm{\text{mac}}}= 2.22\cdot10^{-16}$ is the double machine precision.
\end{assumption}
\begin{lemma}
	\label{Lemma:AGC_search}
	Let Assumptions~{{\rm{\ref{asmpt:parameter_separable}}}--{\rm{\ref{asmpt:rejection}}}} hold true. The search of the AGC point defined in Definition~{\rm{\ref{Def:AGC}}} takes finitely many iterations at each step $k$ of the TR-RB Algorithm.
\end{lemma}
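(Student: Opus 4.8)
The plan is to verify that the backtracking search underlying Definition~\ref{Def:AGC} terminates, i.e., that for some finite integer $j$ the point $\mu^{(k,0)}(j) = \Proj_\Params\big(\mu^{(k)} - \kappa^j\,\nabla_\mu\cJhatn^{(k)}(\mu^{(k)})\big)$ simultaneously fulfils the Armijo-type condition \eqref{Armijo} and the Trust-Region condition \eqref{TR_radius_condition} for $\ell = 0$. I would establish finite bounds $j_1, j_2$ for the two conditions separately and then take $j_c^{(k)} \le \max\{j_1,j_2\}$.

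For \eqref{TR_radius_condition}, the decisive structural fact is that the reduced model used at the $k$-th outer step is \emph{exact at its own center} $\mu^{(k)}$: by construction of the RB spaces (see Section~\ref{sec:construct_RB}; for $k=0$ this is the explicit initialization $V_\red^{\pr,0}=\{u_{h,\mu^{(0)}}\}$, $V_\red^{\du,0}=\{p_{h,\mu^{(0)}}\}$, and for $k\ge1$ it is the enrichment at the accepted iterate $\mu^{(k)}$ performed in the previous outer step of Algorithm~\ref{Alg:TR-RBmethod}) one has $u_{h,\mu^{(k)}}\in V_\red^{\pr,(k)}$ and $p_{h,\mu^{(k)}}\in V_\red^{\du,(k)}$. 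Hence $u_{\red,\mu^{(k)}}=u_{h,\mu^{(k)}}$ and $p_{\red,\mu^{(k)}}=p_{h,\mu^{(k)}}$ solve \eqref{eq:state_h} and \eqref{eq:dual_solution_h}, the primal and dual residuals vanish at $\mu^{(k)}$, so $\Delta_\pr(\mu^{(k)})=\Delta_\du(\mu^{(k)})=0$, hence $\Delta_{\cJhatn}(\mu^{(k)})=0$, while $\cJhatn^{(k)}(\mu^{(k)})=\Jhat_h(\mu^{(k)})>0$ by Assumption~\ref{asmpt:bound_J}. Since $\Proj_\Params$ is $1$-Lipschitz and leaves $\mu^{(k)}\in\Params$ fixed, $\|\mu^{(k,0)}(j)-\mu^{(k)}\|_2\le\kappa^j\,\|\nabla_\mu\cJhatn^{(k)}(\mu^{(k)})\|_2\to0$; together with the continuity of $\Delta_{\cJhatn}$ (Remark~\ref{continuity_of_estimator}) and of $\cJhatn^{(k)}$, the quotient in \eqref{TR_radius_condition} converges to $0\le\delta^{(k)}$, so \eqref{TR_radius_condition} holds for all $j\ge j_2$ with some finite $j_2$.

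For \eqref{Armijo} I would run the standard projected-gradient backtracking argument (cf.~\cite{Kel99}). Write $\nabla:=\nabla_\mu\cJhatn^{(k)}(\mu^{(k)})$, $\lambda_j:=\kappa^j$ and $s_j:=\mu^{(k,0)}(j)-\mu^{(k)}$. Testing the variational characterization of the projection of $\mu^{(k)}-\lambda_j\nabla$ onto the convex set $\Params$ against $\nu=\mu^{(k)}$ yields $\nabla\cdot s_j\le-\lambda_j^{-1}\|s_j\|_2^2$. On the other hand, $\cJhatn^{(k)}$ is continuously differentiable with Lipschitz gradient on the compact set $\Params$ (with some constant $L^{(k)}$, owing to Assumptions~\ref{asmpt:parameter_separable} and \ref{asmpt:differentiability}), so the descent lemma gives
\[
\cJhatn^{(k)}(\mu^{(k,0)}(j))-\cJhatn^{(k)}(\mu^{(k)})\;\le\;\nabla\cdot s_j+\tfrac{L^{(k)}}{2}\|s_j\|_2^2\;\le\;-\Big(\lambda_j^{-1}-\tfrac{L^{(k)}}{2}\Big)\|s_j\|_2^2 .
\]
Since $\kappa_{\mathsf{arm}}\in(0,\tfrac12)$, as soon as $\lambda_j\le2(1-\kappa_{\mathsf{arm}})/L^{(k)}$ the right-hand side is bounded by $-\kappa_{\mathsf{arm}}\lambda_j^{-1}\|s_j\|_2^2$, which is precisely \eqref{Armijo}; because $\lambda_j=\kappa^j\searrow0$ this is the case for all $j\ge j_1$ with some finite $j_1$. (The degenerate situations $s_j=0$ for all $j$ — i.e., $\mu^{(k)}$ already stationary for the model, where both conditions hold trivially with $j=0$ — and $L^{(k)}=0$ are immediate.) Combining gives $j_c^{(k)}\le\max\{j_1,j_2\}<\infty$.

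The main obstacle is less any individual estimate than the interplay of the two conditions: \eqref{TR_radius_condition} can only become satisfiable as $j\to\infty$ if the estimator quotient \emph{at the center} $\mu^{(k)}$ does not already exceed $\delta^{(k)}$, and it is exactly the exactness of the reduced model at $\mu^{(k)}$ — a consequence of the initialization/enrichment strategy rather than of generic RB approximation — that makes this quotient vanish and thereby renders the backtracking search well-posed. The Armijo part is classical, the only care there being the Lipschitz regularity of $\nabla_\mu\cJhatn^{(k)}$, which follows from Assumptions~\ref{asmpt:parameter_separable} and \ref{asmpt:differentiability}.
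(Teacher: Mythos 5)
Your proof is correct and follows essentially the same route as the paper's: both arguments split the search into the Armijo condition and the Trust-Region condition, handle the former by the classical projected-gradient backtracking argument (which the paper simply cites from \cite[Theorem~5.4.5]{Kel99} while you carry it out via the projection variational inequality and the descent lemma), and handle the latter by observing that the enrichment/initialization at $\mu^{(k)}$ makes the estimator vanish at the center, so that continuity of $\Delta_{\cJhatn}$ and $\cJhatn^{(k)}$ together with positivity of the denominator forces the quotient below $\delta^{(k)}$ for all sufficiently large $j$.
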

\begin{proof}
	We want to prove that there exists an index $j_c^{(k)}<\infty$ for each $k\geq 0$, for which $\mu_\text{\rm{AGC}}^{(k)}= \mu^{(k,0)}(j^{(k)}_c)$ satisfies
	\eqref{Armijo}-\eqref{TR_radius_condition} for $\ell=0$. From \cite[Theorem~5.4.5]{Kel99} (and the subsequent discussion) we conclude that for all $k\in\mathbb{N}$ there exists a strictly positive index $j^{(k)}_1\in\mathbb{N}$ such that $\mu^{(k,0)}(j)$ satisfies \eqref{Armijo} for $j\geq j^{(k)}_1$ and $\ell=0$. If $k=0$, by construction we have that $\Delta_{\Jhat_\red^{(0)}}(\mu^{(0)}) = 0$. Therefore, there exists a sufficiently large (but finite) index $j^{(0)}_2\in\mathbb{N}$ such that $\mu^{(0,0)}(j)$ satisfies \eqref{TR_radius_condition} for all $j\geq j^{(0)}_2$ and $\ell=0$. The reason relies on the 
	continuity w.r.t.~$\mu$ of the error estimator $\Delta_{\Jhat_\red^{(k)}}(\mu)$ (cf.~Remark~\ref{continuity_of_estimator}) and of the cost functional $\cJhatn^{(k)}(\mu)$ for all $k\in\mathbb{N}$. Hence there exists $j^{(0)}_c=\max(j^{(0)}_1,j^{(0)}_2)<\infty$, for which $\mu^{(0,0)}(j)$ satisfies \eqref{Armijo}-\eqref{TR_radius_condition} for $\ell=0$. If $k\geq 1$, 
	since the model has been enriched, i.e.~$\Delta_{\Jhat_\red^{(k)}}(\mu^{(k)}) = 0$, we can show the claim arguing as we did for $k=0$. 
	Note, in fact, that we increase the iteration counter only when $\mu^{(k)}$ is accepted at iteration $k-1$ and, thus, when the RB model is enriched at this parameter. 
\end{proof}
\begin{theorem}
\label{Thm:convergence_of_TR}
	Let the hypotheses of Lemma~{\rm{\ref{Lemma:AGC_search}}} be verified. 
	Then every accumulation point $\bar\mu$ of the sequence $\{\mu^{(k)}\}_{k\in\mathbb{N}}\subset \Params$ generated by the TR-RB algorithm 
	is an approximate first-order critical point for $\Jhat_h$ (up to the chosen tolerance $\tau_\text{\rm{sub}}$), i.e., it holds
	\begin{equation}
	\label{First-order_critical_condition}
	\|\bar \mu-P_\Params(\bar \mu-\nabla_\mu \Jhat_h(\bar \mu))\|_2 \leq \tau_\text{\rm{sub}}.
	\end{equation}
\end{theorem}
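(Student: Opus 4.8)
The plan is to run the error-aware Trust-Region argument of \cite{QGVW2017,YM2013}, upgraded in two ways: the model function is now the NCD-corrected reduced functional $\cJhatn^{(k)}$ rather than the standard reduced cost, and the bilateral constraints $\mu_\mathsf a\le\mu\le\mu_\mathsf b$ replace the gradient norm by the projected criticality measure $q(\mu):=\|\mu-\Proj_\Params(\mu-\nabla_\mu\Jhat_h(\mu))\|_2$. (If the algorithm terminates after finitely many steps the assertion is trivial, so one may assume $\{\mu^{(k)}\}$ is infinite.) The first thing I would establish is that each outer iterate $\mu^{(k)}$ is an \emph{exact} point of the current RB model: since $k$ is incremented only after $\mu^{(k)}$ has been accepted and the RB spaces have been enriched at $\mu^{(k)}$ (the initialization playing this role for $k=0$), we have $u_{\red,\mu^{(k)}}=u_{h,\mu^{(k)}}$ and $p_{\red,\mu^{(k)}}=p_{h,\mu^{(k)}}$, so both residuals $r_{\mu^{(k)}}^\pr(u_{\red,\mu^{(k)}})$ and $r_{\mu^{(k)}}^\du(u_{\red,\mu^{(k)}},p_{\red,\mu^{(k)}})$ vanish. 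By Proposition~\ref{prop:Jhat_error}.(ii) this yields $\cJhatn^{(k)}(\mu^{(k)})=\Jhat_h(\mu^{(k)})$ and $\Delta_{\cJhatn^{(k)}}(\mu^{(k)})=0$; and, because every correction term in the gradient formulas of Proposition~\ref{prop:true_corrected_reduced_gradient_adj}, Proposition~\ref{prop:true_corrected_reduced_gradient} and Definition~\ref{prop:red_approx_grad} carries a factor of $r_\mu^\pr$ or $r_\mu^\du$ (directly or through $z_{\red,\mu}$, $w_{\red,\mu}$, or the reduced sensitivities), also $\nabla_\mu\cJhatn^{(k)}(\mu^{(k)})=\nabla_\mu\Jhat_h(\mu^{(k)})$. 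In particular the reduced criticality measure at $\mu^{(k)}$ equals $q(\mu^{(k)})$.

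Next I would extract a convergent, monotone sequence of FOM cost values. Since $\mu^{(k)}_\text{\rm{AGC}}$ satisfies the Armijo condition \eqref{Armijo} at $\ell=0$ and the inner projected-BFGS iterates are $\cJhatn^{(k)}$-nonincreasing by \eqref{Armijo}, one gets $\cJhatn^{(k)}(\mu^{(k+1)})\le\cJhatn^{(k)}(\mu^{(k)}_\text{\rm{AGC}})\le\cJhatn^{(k)}(\mu^{(k)})$. Combining this with the error-aware sufficient decrease \eqref{Suff_decrease_condition}, which Steps~\ref{Suff_condition_TRRB}/\ref{Nec_condition_TRRB} of Algorithm~\ref{Alg:TR-RBmethod} enforce and which Assumption~\ref{asmpt:rejection} guarantees to be attainable, and with the exactness above, I obtain
\[
\Jhat_h(\mu^{(k+1)})=\cJhatn^{(k+1)}(\mu^{(k+1)})\le\cJhatn^{(k)}(\mu^{(k)}_\text{\rm{AGC}})\le\cJhatn^{(k)}(\mu^{(k)})=\Jhat_h(\mu^{(k)}).
\]
By Assumption~\ref{asmpt:bound_J} the sequence $\{\Jhat_h(\mu^{(k)})\}$ is bounded below, hence convergent, and therefore the predicted AGC decrease $\cJhatn^{(k)}(\mu^{(k)})-\cJhatn^{(k)}(\mu^{(k)}_\text{\rm{AGC}})$ tends to $0$.

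The main obstacle is the core Trust-Region estimate: to bound that predicted decrease from below by a positive function of $q(\mu^{(k)})$, and then to turn the vanishing of the decrease into $q(\mu^{(k)})\to0$. For the lower bound I would note that $\mu^{(k)}_\text{\rm{AGC}}$ is obtained by a single projected Armijo search along $-\nabla_\mu\cJhatn^{(k)}(\mu^{(k)})$ starting at $\mu^{(k)}$, and adapt the projected-gradient line-search estimates of \cite[Sec.~5.4--5.5]{Kel99} (cf.~also \cite{CasTr15}): using the Lipschitz constant $1$ of $\Proj_\Params$ and the fact that the continuity constants of $\nabla_\mu\cJhatn^{(k)}$ are bounded uniformly over all admissible RB spaces (they involve only the continuity constants of $a,l,j,k$, their $\mu$-derivatives, and the coercivity constant $\underline{\bformd}$, none of which depend on the reduced spaces), this should give a constant $c>0$ independent of $k$ with
\[
\cJhatn^{(k)}(\mu^{(k)})-\cJhatn^{(k)}(\mu^{(k)}_\text{\rm{AGC}})\ \ge\ c\,q(\mu^{(k)})\,\min\bigl\{1,\,q(\mu^{(k)}),\,\delta^{(k)}\bigr\},
\]
where the $\delta^{(k)}$-term enters through the extra line-search restriction \eqref{TR_radius_condition}, inactive for small steps because $\Delta_{\cJhatn^{(k)}}$ is continuous (Remark~\ref{continuity_of_estimator}) and vanishes at $\mu^{(k)}$. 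The genuinely delicate point — and the place where the inequality constraints force a re-examination of the argument of \cite{QGVW2017}, which treated the unconstrained/equality-constrained case — is to exclude $\delta^{(k)}\to0$ along a subsequence on which $q(\mu^{(k)})$ stays bounded away from zero: for such $k$ the accepted candidate $\mu^{(k+1)}$, whose relative estimator is $\le\delta^{(k)}$, lies (by continuity and the vanishing of the estimator at $\mu^{(k)}$) increasingly close to $\mu^{(k)}$, so that the enriched-model value satisfies $\Jhat_h(\mu^{(k+1)})=\cJhatn^{(k+1)}(\mu^{(k+1)})\le\cJhatn^{(k)}(\mu^{(k+1)})+\Delta_{\cJhatn^{(k)}}(\mu^{(k+1)})\le\cJhatn^{(k)}(\mu^{(k)}_\text{\rm{AGC}})+\delta^{(k)}\cJhatn^{(k)}(\mu^{(k+1)})$, which together with the strict decrease displayed above makes \eqref{Suff_decrease_condition} hold for $\delta^{(k)}$ small; the candidate is then accepted in Step~\ref{Suff_condition_TRRB} (or the corresponding acceptance branch) without shrinking the radius, so $\delta^{(k)}$ remains bounded below along the tail. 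Combined with the vanishing predicted decrease from the previous paragraph, the Cauchy-decrease bound then forces $q(\mu^{(k)})\to0$.

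Finally I would pass to accumulation points. The map $\mu\mapsto\nabla_\mu\Jhat_h(\mu)$ is continuous on $\Params$ (Proposition~\ref{prop:grad_Jhat}, being assembled from the continuous solution maps $\mu\mapsto u_{h,\mu}$, $\mu\mapsto p_{h,\mu}$ and the parameter-derivative coefficients of Assumption~\ref{asmpt:parameter_separable}), and $\Proj_\Params$ is continuous, so $\mu\mapsto q(\mu)$ is continuous. If $\bar\mu$ is an accumulation point of $\{\mu^{(k)}\}$, choosing a subsequence $\mu^{(k_j)}\to\bar\mu$ and using the previous step gives $\|\bar\mu-\Proj_\Params(\bar\mu-\nabla_\mu\Jhat_h(\bar\mu))\|_2=q(\bar\mu)=\lim_{j}q(\mu^{(k_j)})=0\le\tau_\text{\rm{sub}}$, which is \eqref{First-order_critical_condition}.
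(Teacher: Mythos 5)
Your proposal takes a genuinely different route from the paper. You run the classical Trust-Region argument: exactness of the model at enrichment points, telescoping of the monotone FOM cost values via \eqref{Suff_decrease_condition}, a Cauchy-type lower bound on the AGC decrease, and a lower bound on the TR radius, concluding that the projected criticality measure tends to zero. The paper never invokes a Cauchy decrease bound; it instead exploits the structure of the sub-problem termination criteria \eqref{Termination_crit_subproblem}. Concretely, it shows that along a convergent subsequence the relative estimator $q^{(k)}(\mu)=\Delta_{\Jhat_\red^{(k)}}(\mu)/\Jhat_\red^{(k)}(\mu)$ — which vanishes at the enrichment point $\mu^{(k)}$ and is uniformly continuous — eventually satisfies $q^{(k_{i+1}-1)}(\mu^{(k_{i+1})})<\beta_2\delta^{(k_{i+1}-1)}$, so the boundary criterion \eqref{Cut_of_TR} cannot be the one stopping the inner solver; hence \eqref{FOC_subproblem} must hold at $\mu^{(k_{i+1})}$, giving reduced criticality $\le\tau_\text{\rm{sub}}$ directly, which is transferred to $\Jhat_h$ in the limit via $\nabla_\mu\cJhatn^{(k)}\to\nabla_\mu\Jhat_h$. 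This also explains why the theorem asserts only ``$\le\tau_\text{\rm{sub}}$'': the inner solver is permitted to stop once \eqref{FOC_subproblem} is met. Your shared ingredients (exactness at enrichment points, the decrease chain, continuity of the estimator, the final limit passage) are correct and coincide with the paper's.

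That said, the two estimates that carry your argument are only sketched and, as written, do not close. The uniform Cauchy decrease bound $\cJhatn^{(k)}(\mu^{(k)})-\cJhatn^{(k)}(\mu^{(k)}_\text{\rm{AGC}})\ge c\,q(\mu^{(k)})\min\{1,q(\mu^{(k)}),\delta^{(k)}\}$ is asserted rather than proven: it requires a Lipschitz constant for $\nabla_\mu\cJhatn^{(k)}$ uniform in $k$ and over all RB spaces, a uniform lower bound on the accepted Armijo step, and a quantitative treatment of the extra backtracking restriction \eqref{TR_radius_condition} during the AGC search — none of which is established. More seriously, your argument that $\delta^{(k)}$ stays bounded below fails as displayed: from $\Jhat_h(\mu^{(k+1)})\le\cJhatn^{(k)}(\mu^{(k)}_\text{\rm{AGC}})+\delta^{(k)}\cJhatn^{(k)}(\mu^{(k+1)})$ one cannot deduce \eqref{Suff_decrease_condition}, because the positive term $\delta^{(k)}\cJhatn^{(k)}(\mu^{(k+1)})$ must be absorbed by the strict decrease between $\mu^{(k)}_\text{\rm{AGC}}$ and $\mu^{(k+1)}$, which can be arbitrarily small (the inner solver may stop at the AGC point itself). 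This is exactly the failure mode that Assumption~\ref{asmpt:rejection} excludes by fiat, and the paper avoids the entire radius discussion through the termination-criteria argument above. Your strategy could in principle be completed (and would yield the stronger conclusion that the criticality measure vanishes at $\bar\mu$), but the proof as written has genuine gaps at both of its central steps.
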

\begin{proof}
	The set $\Params\subset \mathbb{R}^P$ is compact. Therefore there exists a sequence of indices $\left\{k_i\right\}_{i\in\mathbb{N}}$ such that the sub-sequence $\{\mu^{(k_i)}\}_{i\in\mathbb{N}}$ converges to a point $\bar \mu\in \Params$.
	It remains to show that $\bar \mu$ is an approximate first-order critical point. At first, note that once the RB space is enriched at a point $\mu^{(k)}$, we have $\Delta_{\Jhat_\red^{(k)}}(\mu^{(k)})= 0$. Hence, also $q^{(k)}(\mu^{(k)}) = 0$ holds, where
	\begin{align*}
	q^{(k)}(\mu) := \frac{\Delta_{\Jhat_\red^{(k)}}(\mu)}{\Jhat_\red^{(k)}(\mu)} && \text{for all } k\in\mathbb{N},\, \mu\in\Params.
	\end{align*}
	Note that both the estimator $\Delta_{\Jhat_\red^{(k)}}$ and $q^{(k)}$ are uniformly continuous on $\Params$ for all $k\geq 0$. This follows directly from Remark~\ref{continuity_of_estimator} and the Heine-Cantor theorem. When the model is enriched at a parameter $\mu^{(k)}$, from the uniform continuity of $q^{(k)}$ it follows that for all $\varepsilon>0$ there exists an $\eta^{(k)}>0$ (depending on $\varepsilon$) such that
	$\|\mu^{(k)}-\mu\|_2< \eta^{(k)}$ implies
	\[
	\big|q^{(k)}(\mu)-\underset{= 0}{\underbrace{q^{(k)}(\mu^{(k)})}}\big| < \varepsilon.
	\]
    Furthermore, due to the convergence of the sub-sequence $\left\{\mu^{(k_i)}\right\}_{i\in\mathbb{N}}\subset\Params$, we have that there exists a sufficiently large constant $I>0$ and a constant $\gamma>0$ such that $\|\mu^{(k_i)}-\mu^{(k_{i+1})}\|_2< \gamma < \eta^{(k_i)}$ for all $i \geq I$. Then we have
    \begin{align}
    \label{unif_cont_estimator_step}
    q^{(k_i)}(\mu^{(k_{i+1})}) = \frac{\Delta_{\Jhat_\red^{(k_i)}}(\mu^{(k_{i+1})})}{\Jhat_\red^{(k_i)}(\mu^{(k_{i+1})})}<\varepsilon && \text{ for all } i\geq I.
    \end{align}
    We want to prove that $q^{(k_{i+1}-1)}(\mu^{(k_{i+1})})< \beta_2 \delta^{(k_{i+1}-1)}$ for all $i\geq I$,  such that the unique solution $\mu^{(k_{i+1})}$ to \eqref{TRRBsubprob} (for $k=k_{i+1}-1$) is not triggering the termination criteria \eqref{Cut_of_TR}. Note that $\varepsilon$ in \eqref{unif_cont_estimator_step} can be chosen appropriately (which implies a certain $\eta^{(k_i)}$ for all $i\in\mathbb{N}$ and thus a sufficiently large index $I$, of course). Since the RB space is enriched at each iteration of Algorithm~\ref{Alg:TR-RBmethod}, we especially have that $\Delta_{\Jhat_\red^{(k_{i+1}-1)}}(\mu^{(k_{i+1})})\leq \Delta_{\Jhat_\red^{(k_i)}}(\mu^{(k_{i+1})})$. Using \eqref{unif_cont_estimator_step}, we find that $\Delta_{\Jhat_\red^{(k_{i+1}-1)}}(\mu^{(k_{i+1})})\leq \Delta_{\Jhat_\red^{(k_i)}}(\mu^{(k_{i+1})})< \varepsilon \Jhat_\red^{(k_i)}(\mu^{(k_{i+1})})$. Hence, $q^{(k_{i+1}-1)}(\mu^{(k_{i+1})})< \beta_2 \delta^{(k_{i+1}-1)}$ holds for 
    \[
    \varepsilon = \beta_2 \delta^{(k_{i+1}-1)}\frac{\Jhat_\red^{(k_{i+1}-1)}(\mu^{(k_{i+1})})}{\Jhat_\red^{(k_i)}(\mu^{(k_{i+1})})}
    \] 
    and for all $i\geq I$.
	This shows that from a certain iteration $I$, we are far enough from the
	boundary of the Trust-Region for all $i\geq I$, so that \eqref{Cut_of_TR} does not affect the projected BFGS algorithm. Thus, \eqref{FOC_subproblem} must hold for $\mu^{(k_{i+1})}= \mu^{(k_{i+1}-1,L^{(k_{i+1}-1)})}$ for $i\geq I$. Hence, we have proved that each $\mu^{(k_{i+1})}$ is an approximate first-order critical point for $\cJhatn^{(k_{i+1}-1)}$ (up to the chosen tolerance $\tau_\text{\rm{sub}}$) for all $i\geq I$, which yields to
	\begin{align*}
		\|\mu^{(k_{i+1})}-P_\Params(\mu^{(k_{i+1})}-\nabla_\mu\cJhatn^{(k_{i+1}-1)}(\mu^{(k_{i+1})})\|_2 \leq \tau_\text{\rm{sub}}, && \text{for all } i\geq I.
	\end{align*}
	Moreover, taking into account the RB method properties and the fact that $V_h$ is a finite dimensional space, there exists a constant $I_\nabla>0$ sufficiently large, such that $\nabla_\mu \cJhatn^{(k_i)}(\mu)= \nabla_\mu \Jhat_h(\mu)+\epsilon^{(k_i)}$ for all $\mu$ in a neighborhood of $\bar\mu$ and for $i\geq I_{\nabla}$, with $\epsilon^{(k_i)}\to 0$ as $i\to\infty$. 
	Thus, exploiting the continuity of the projection operator and assuming $i\geq \max(I,I_\nabla)$, we have that
	\[
	\begin{aligned}
	\tau_\text{\rm{sub}} & \geq \|\mu^{(k_{i+1})}-P_\Params(\mu^{(k_{i+1})}-\nabla_\mu\cJhatn^{(k_{i+1}-1)}(\mu^{(k_{i+1})})\|_2 \\
	& = \|\mu^{(k_{i+1})}-P_\Params(\mu^{(k_{i+1})}-\nabla_\mu\Jhat_h(\mu^{(k_{i+1})})+\epsilon^{(k_{i+1}-1)})\|_2 \to \|\bar\mu - P_\Params(\bar\mu-\nabla_\mu\Jhat_h(\bar \mu))\|_2.
	\end{aligned}
	\]
	Hence, the accumulation point $\bar \mu$ is an approximate first-order critical point (up to the tolerance $\tau_\text{\rm{sub}}$).
\end{proof}
\begin{remark}
\label{rmk:local_minimum_TR}
What remains to prove is that $\bar \mu$ is a local minimum of $\Jhat_h$ (or rather a sufficiently close approximation of a local minimum). Exploiting the sufficient decrease condition, one can easily show by contradiction that $\bar \mu$ is not a maximum of $\Jhat_h$. It can still be a saddle point as well as a local minimum. In the numerical experiments, to verify that the computed point $\bar \mu$ is actually a local minimum, we employ the second-order sufficient optimality conditions after the algorithm terminates.
\end{remark}

\subsection{Construction of RB spaces}
\label{sec:construct_RB} 
In an enrichment step of the outer
loop of the TR-algorithm \ref{Alg:TR-RBmethod} for $\mu \in \Params$, we assume to have access to the primal and dual
solutions $u_{h, \mu}, p_{h, \mu} \in V_h$
and consider two strategies to enrich the RB spaces.

\begin{enumerate}[(a)]
	\item \emph{Lagrangian RB spaces}: \label{enrich:lag} We add each FOM solution to the RB space that is directly related to its respective reduced formulation, i.e.~for a
	given $\mu \in \Params$, we enrich by
	$
	V^{\pr,k}_{\red} = V^{\pr,k-1}_{\red} \cup \{u_{h,\mu}\},  
	V^{\du,k}_{\red} = V^{\du,k-1}_{\red} \cup \{p_{h,\mu}\}.
	$
	\item \emph{Single RB space}: We add all available information into a single RB space,
  i.e.~$V^{\pr, k}_{\red} = V^{\du, k}_{\red} = V^{\pr,k-1}_{\red} \cup \{u_{h,\mu}, p_{h,\mu}\}$. 
	According to Section~\ref{sec:mor} this results in  
	$\cJhatn(\mu)$ being equal to the standard RB reduced functional from \eqref{eq:Jhat_red}. 
	\label{enrich:single}
\end{enumerate} 
These strategies for constructing RB spaces have a significant impact on the performance and accuracy of the TR-RB method.
Note that offline computations for the construction of RB models scale quadratically with the number of
basis functions in the RB space.
Thus, Lagrange RB spaces in \eqref{enrich:lag} are computationally beneficial compared to \eqref{enrich:single} at a potential loss of accuracy
of the corresponding RB models (since less information is added). Moreover, different spaces as in \eqref{enrich:lag} destroy the duality of state and adjoint equations, cf.~Section~\ref{sec:standard_approach}. 

\subsection{Trust-Region variants based on adaptive enrichment strategies}
\label{sec:high_order_TR}
A major contribution of this article is to introduce and analyze variants of adaptive TR-RB methods with projected BFGS as sub-problem solver for efficiently computing a 
solution of the optimization problem \eqref{P}.
In terms of performance we need to account for all computational costs, including traditional offline and online costs of the algorithms.
The proposed methods mainly differ in terms of the model function and its gradient information. 
Following Section~\ref{sec:TR}, we propose a TR method which adaptively builds
an RB space along the path of optimization (see Algorithm \ref{Alg:TR-RBmethod}). From a MOR perspective this diminishes
the offline time of the ROM significantly since no global RB space (with respect to the parameter domain) has
to be built in advance. 
We enrich the model after the sub-problem \eqref{TRRBsubprob} of the TR method has been solved. 
We distinguish  three different approaches:
\begin{enumerate}[1.]
	\item \emph{standard approach:} Following Section~\ref{sec:standard_approach}, the standard approach for the functional 
	is to replace the FOM quantities by their respective ROM counterpart, i.e.~we consider the map $\mu \mapsto \Jnoncor_\red(\mu)$ from \eqref{eq:Jhat_red}. 
	Gradient information can be computed by reducing the corresponding FOM gradient which results in 
	$\noncorgrad_\mu \J(u_{\red, \mu}, \mu)$ from \eqref{naive:red_grad}. Consequently this approach does not allow for using a higher order estimate but 
	$\Delta_{\Jnoncor_\red}(\mu)$. 
	\item \emph{semi NCD-corrected approach:} A first correction strategy is to replace the functional by the NCD-corrected RB reduced functional $\cJhatn$
	from \eqref{eq:Jhat_red_corected} but stick with the inexact gradient of the standard approach.
  This allows to use the higher order estimator for the functional, i.e.
	$\Delta_{\cJhatn}(\mu)$. 
	\item \emph{NCD-corrected approach:} 
    We propose to consider the NCD-corrected RB reduced functional $\cJhatn$ from \eqref{eq:Jhat_red_corected} and its actual gradient according to 
	Propostion~\ref{prop:true_corrected_reduced_gradient_adj}.
  Note that we only need to solve two additional equations, independently of the dimension $P$ of the parameter space. 
\end{enumerate}
For the basis construction, we may use variants \eqref{enrich:lag} or \eqref{enrich:single} from Section~\ref{sec:construct_RB}. 
Note however, that by using 
the basis enrichment \eqref{enrich:single}, all approaches $1.$ - $3.$ are equivalent. 
Using variant \eqref{enrich:lag} with BFGS is inspired from \cite{QGVW2017}. However, our algorithms differ from the 
TR-RB approach in \cite{QGVW2017} since we are working with the NCD-corrected reduced cost functional (in 2) and its actual gradient (in 2 and 3). 
Note that the presence of inequality constraints, which are missing in \cite{QGVW2017}, implies a projection-based optimization algorithm.
In addition, we stress that, differently from \cite{QGVW2017}, we take advantage of the proposed condition for 
enlarging the TR radius and of a stopping criterium independent from the RB a posteriori estimates, as presented in Section~\ref{sec:TR}.

\begin{remark}
	Note that we do not use the sensitivity based quantities from Section~{\rm{\ref{sec:sens_estimation}}} although they suggest the highest
	numerical accuracy w.r.t.~the FOM optimality system.
  However, for the experiments in Section~\ref{sec:num_experiments}, 
  additional computational cost for computing
	FOM sensitivities will not pay off in the TR-RB algorithm, especially for high-dimensional parameter spaces.  
\end{remark}

\section{Numerical experiments}
\label{sec:num_experiments}
 
We present numerical experiments to demonstrate the adaptive TR-RB variants from Section~\ref{sec:high_order_TR} with both RB constructions from Section~\ref{sec:construct_RB} for quadratic objective functionals with elliptic PDE constraints as in \eqref{P}, and compare them to state-of-the art algorithms from the literature.
We also validate the higher-order a posteriori error estimates from Section \ref{sec:a_post_error_estimates} numerically.
We consider two setups: first, the elliptic thermal fin problem from \cite[Sec.~5.1.1]{QGVW2017} (where the correction term of the proposed NCD-corrected approach vanishes) in Section~\ref{sec:thermalfin}. Second, we consider a more challenging optimization problem in Section~\ref{sec:mmexc_example}, including a detailed analysis of the a posteriori error estimates from Section~\ref{sec:a_post_error_estimates}.
All simulations have been performed with a pure \texttt{Python} implementation based on the open source MOR library pyMOR \cite{milk2016pymor}, making use of pyMORs builtin vectorized \texttt{numpy/scipy}-based discretizer for the FOM and generic MOR algorithms for projection and orthonormalization (such as a stabilized Gram-Schmidt algorithm) to effortlessly obtain efficient ROMs.
The source code to reproduce all results (including detailed interactive \texttt{jupyter}-notebooks\footnote{Available at \url{https://github.com/TiKeil/NCD-corrected-TR-RB-approach-for-pde-opt}.}) is available at \cite{Code}.
All experiments are based on the same implementation (including a reimplementation of \cite{QGVW2017}) and were performed on the same machine multiple times to avoid caching or multi-query effects.
Timings may thus be used to compare and judge the computational efficiency of the different algorithms. 

We consider stationary heat transfer in a bounded connected spatial domain $\Omega \subset \R^2$ with polygonal boundary $\partial\Omega$ partitioned into a non-empty Robin boundary $\Gamma_\text{\rm{R}} \subset \partial\Omega$ and possibly empty distinct Neumann boundary $\Gamma_\text{\rm{N}} = \partial\Omega\backslash \Gamma_\text{\rm{R}}$, and unit outer normal $n: \partial\Omega \to \R^2$.
We consider the Hilbert space $V = H^1(\Omega) := \{v \in L^2(\Omega) \,|\, \nabla v \in L^2(\Omega) \}$ of weakly differentiable functions and, for an admissible parameter $\mu \in \Params$, we seek the temperature $u_\mu \in V$ as the solution of
\begin{align}
  -\nabla\cdot\big(\kappa_\mu \nabla u_\mu \big) &= f_\mu \ \text{in }\Omega, &
  \kappa_\mu \nabla u_\mu\cdot n &= c_\mu (u_\text{\rm{out}} - u_\mu) \ \text{on }\Gamma_\text{\rm{R}},
\label{eq:heat_equation}
  & \kappa_\mu\nabla u_\mu \cdot n &= g_\text{\rm{N}} \ \text{on }\Gamma_\text{\rm{N}}
\end{align}
in the weak sense, with the admissible parameter set, the spatial domain and its boundaries and the data functions $\kappa_\mu \in L^\infty(\Omega)$, $f_\mu \in L^2(\Omega)$, $c_\mu \in L^\infty(\Gamma_\text{\rm{R}})$ and $u_\text{\rm{out}} \in L^2(\Gamma_\text{\rm{R}})$ defined in the respective experiment.
The bilinear form $a$ and linear functional $l$ in \eqref{P.state} are thus given for all $\mu \in \Params$ and $v, w \in V$ by
\begin{align}
  a_\mu(v, w) := \int_\Omega\kappa_\mu \nabla v \cdot \nabla w \dx + \int_{\Gamma_\text{\rm{R}}}\hspace{-5pt}c_\mu\, vw\ds &&\text{and}&& l_\mu(v) := \int_\Omega f_\mu\,v\dx + \int_{\Gamma_\text{\rm{R}}}\hspace{-5pt}c_\mu\, u_\text{\rm{out}}v\ds + \int_{\Gamma_\text{\rm{N}}}\hspace{-5pt}g_\text{\rm{N}}v\ds.
\end{align}
For the FOM we fix a fine enough reference simplicial or cubic mesh and define $V_h \subset V$ as the respective space of continuous piecewise (bi-)linear Finite Elements.

Since the inner product and norm have a big influence on the computational efficiency of the a posteriori error estimates as well as their sharpness, we use the mesh-independent energy-product $(u, v) := a_{\check{\mu}}(u, v)$ for a fixed parameter $\check{\mu} \in \Params$, which is a product over $V$ due to the symmetry, continuity and coercivity of the bilinear form for each example below.
Owing to this choice of the product, we may use the $\min$-theta approach from \cite[Prop.~2.35]{HAA2017} to obtain lower bounds on coercivity constants and the $\max$-theta approach from \cite[Ex.~5.12]{HAA2017} to obtain upper bounds on continuity constants, each required for the a posteriori error estimates.
Compared to the more general Successive Constraint Method \cite{pat07}, this approach yields quite sharp estimates and is computationally more efficient, both offline and online.
Due to Assumption~\ref{asmpt:parameter_separable} and the bi-linearity of the objective functional, we may carry out the preassembly of all high-dimensional quantities after each enrichment, which is well-known for RB methods \cite[Sec.~2.5]{HAA2017}.
We would like to point out that while the more accurate and stable preassembly of the estimates from \cite{buhr14} is readily available in pyMOR, the slightly cheaper standard preassembly of the estimates was sufficient for our experiments.

For all experiments, we use an initial TR radius of $\delta^0 = 0.1$,
a TR shrinking factor $\beta_1=0.5$, an Armijo step-length $\kappa=0.5$,
a truncation of the TR boundary of $\beta_2 = 0.95$,
a tolerance for enlarging the TR radius of $\eta_\varrho = 0.75$,
a stopping tolerance for the TR sub-problems of $\tau_\text{\rm{sub}} = 10^{-8}$,
a maximum number of TR iteration $K = 40$, a maximum number of sub-problem iteration $K_{\text{\rm{sub}}}= 400$ and
a maximum number of Armijo iteration of $50$. We also point out that the stopping tolerance for the FOC condition 
$\tau_\text{\rm{FOC}}$ is specified in each experiment.  

\subsection{State of the art optimization methods}
\label{sec:state_of_the_art_methods}

\noindent
We compare our proposed methods to the following ones from the literature:

\textbf{Adaptive TR-RB with BFGS sub-problem solver and Lagrangian RBs \cite{QGVW2017}}:
	We consider the same method as in \cite{QGVW2017}, where the authors used the standard functional and gradient from Section~\ref{sec:standard_approach}. 
	Furthermore, no enlarging strategy has been used for the TR-radius and no projection for parameter constraints has been considered.
	Importantly, the authors did not take advantage of the fact, that the full order FOC condition in line 23 of Algorithm \ref{Alg:TR-RBmethod}
	is cheaply available after an enrichment step. Instead they used the reduced FOC condition plus the estimator for the gradient of the
	cost functional $\|\noncorgrad_\mu \Jnoncor_\red(\mu^{(k+1)}))\|_2 + \Delta_{\noncorgrad_\mu \Jnoncor_\red}(\mu) \leq \tau_\text{\rm{FOC}}$ in line 23. Note that this 
	approach has multiple drawbacks. First, the evaluation is more costly due to the estimator. Second, it is less accurate and third, it can
	prevent the TR-RB from converging in case the estimator is not able to be small enough (for instance governed by high constants or numerical
	issues in the estimator).

\textbf{FOM projected BFGS}:  We consider a standard projected BFGS method,
	which uses FOM evaluations of the forward model to compute the reduced cost functional and its gradient. 
	We restrict the maximum number of iterations by $400$.

\subsection{Model problem 1: Elliptic thermal fin model problem}
\label{sec:thermalfin}

We consider the six-dimensional elliptic thermal fin example from \cite[Sec.~5.1.1]{QGVW2017} and refer to Figure~\ref{fig:fin} for the problem definition.
The purpose of this experiment is to show the applicability of the proposed algorithms and to compare them to the one proposed in \cite{QGVW2017}.
For all runs we prescribe the same desired parameter $\mu^\text{d} \in \Params$ by randomly drawing $k_1, \dots, k_4$ strictly within $\Params$ and by setting $k_0 = 0.1$ and $\text{Bi} = 0.01$, to artificially mimic the situation where parameter constraints have to be tackled.
Defining $T^\text{d} := q(u_{\mu^\text{d}})$ where $u_{\mu^\text{d}} \in V$ is the solution of \eqref{P.state} associated with the desired parameter and where $q(v) := \int_{\Gamma_\text{\rm{N}}} v \ds$ for $v\in V$ denotes the mean temperature at the root of the fin, we consider a cost functional $\J(u, \mu) = \Theta(\mu) + j_\mu(u) + k_\mu(u, u)$ as in \eqref{P.argmin} with $\Theta(\mu) := (\|\mu^\text{d} - \mu\| / \|\mu^\text{d}\|)^2 + {T^\text{d}}^2 + 1$, $j_\mu(v) := - T^\text{d}\,q(v)$ and $k_\mu(v, w) := 1/2\,q(v)\,q(w)$.
We would like to point out that the authors in \cite{QGVW2017} dropped the ${T^\text{d}}^2 + 1$ term from the definition of $\Theta$, which we re-add to ensure Assumption~\ref{asmpt:bound_J}. This constant term does not change the position of local minima and the derivatives of the cost functional. However, this makes the Trust-Region radius shrink especially at the beginning, slowing down the TR-RB methods. This does not affect the comparison among the TR-RB methods, since all suffer from this issue. 
Note that for this particular example, the proposed NCD-correction term vanishes, see Remark~\ref{rem:fin}.
For the FOM, we generate an unstructured simplicial mesh using pyMORs \texttt{gmsh} (see \cite{gmsh}) bindings, resulting in $\dim V_h = 77537$.

\begin{SCfigure}[50][ht]
	\centering\footnotesize
	\begin{tikzpicture}[scale=1.1]
	\draw (-.5,0) -- (-.5,.75);
	\draw (-.5,.75) -- (-3.,.75);
	\node at (-1.8,0.86) {{\tiny $k_1$}};
	\node at (-1.8,1.86) {{\tiny $k_2$}};
	\node at (-1.8,2.86) {{\tiny $k_3$}};
	\node at (-1.8,3.86) {{\tiny $k_4$}};
	\node at (1.8,0.86) {{\tiny $k_1$}};
	\node at (1.8,1.86) {{\tiny $k_2$}};
	\node at (1.8,2.86) {{\tiny $k_3$}};
	\node at (1.8,3.86) {{\tiny $k_4$}};
	\node at (0,2) {{\tiny $k_0$}};
	\draw (-3.,.75) -- (-3.,1) -- (-.5,1);
	\draw (-.5,1) -- (-.5,1.75) -- (-3.,1.75) -- (-3.,2) -- (-.5,2);
	\draw (-.5,2) -- (-.5,2.75) -- (-3.,2.75) -- (-3.,3) -- (-.5,3);
	\draw (-.5,3) -- (-.5,3.75) -- (-3.,3.75) -- (-3.,4) -- (3,4);
	\draw (.5,0) -- (.5,.75) -- (3.,.75) -- (3.,1) -- (.5,1);
	\draw (.5,1) -- (.5,1.75) -- (3.,1.75) -- (3.,2) -- (.5,2);		
	\draw (.5,2) -- (.5,2.75) -- (3.,2.75) -- (3.,3) -- (.5,3);		
	\draw (.5,3) -- (.5,3.75) -- (3.,3.75) -- (3.,4);		
	\draw (-.5,0) -- (.5,0) node [midway,above] {$\Gamma_\text{\rm{N}}$};
	\draw[densely dotted] (-.5,.75) -- (-.5,1);
	\draw[densely dotted] (-.5,1.75) -- (-.5,2);
	\draw[densely dotted] (-.5,2.75) -- (-.5,3);
	\draw[densely dotted] (-.5,3.75) -- (-.5,4);
	\draw[densely dotted] (.5,.75) -- (.5,1);
	\draw[densely dotted] (.5,1.75) -- (.5,2);
	\draw[densely dotted] (.5,2.75) -- (.5,3);
	\draw[densely dotted] (.5,3.75) -- (.5,4);
	\draw[->|] (3.2,0.45) -- (3.2,0.75);
	\draw[->|] (3.2,1.3) -- (3.2,1.);
  \draw[<->|] (0.5,0.45) -- (3,0.45) node[midway, fill=white] {$L$};
	\draw (3,0.75) -- (3,1) node[midway,right] {\small $t$};
	\end{tikzpicture} 
  \caption{%
  	{\small
    Problem definition of the thermal fin example from Section~\ref{sec:thermalfin}.
    Depicted is the spatial domain $\Omega$ (with $L=2.5$ and $t=0.25$) with Neumann boundary at the bottom with $|\Gamma_{\text{N}}| = 1$ and Robin boundary $\Gamma_\text{\rm{R}} := \partial\Omega \backslash \Gamma_\text{\rm{N}}$, as well as the values $k_0, \dots, k_4 > 0$ of the diffusion $k_\mu$, which is piecewise constant in the respective indicated part of the domain.
    The other data functions in \eqref{eq:heat_equation} are given by $f_\mu = 0$, $g_\text{\rm{N}} = -1$, $u_\text{\rm{out}} = 0$ and $c_\mu = \text{Bi} \in \R$, the Biot number.
    We allow to vary the six parameters $(k_0, \dots, k_4, \text{Bi})$ and define the set of admissible parameters as $[0.1, 10]^5 \times [0.01, 1] \subset \R^P$ with $P = 6$.
    We choose $\check{\mu} = (1, 1, 1, 1, 1, 0.1)$ for the energy product.}
  }
	\label{fig:fin}
\end{SCfigure}
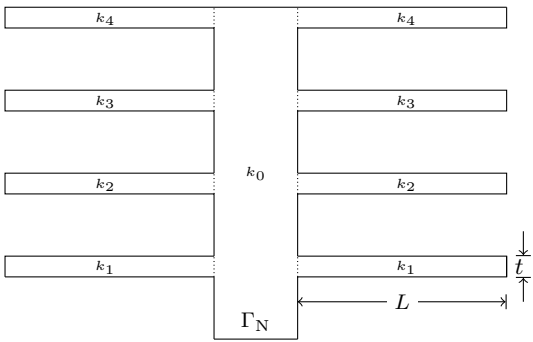

Starting with ten different randomly drawn initial parameters $\mu^{(0)}$, we measure the total computational runtime, the number of TR iterations $k$ and the error in the optimal parameter for all combinations of adaptive TR algorithms from Section~\ref{sec:TRRB_and_adaptiveenrichment} and choice of RB spaces from Section~\ref{sec:construct_RB}, as well as for the state of the art methods from the literature from Section~\ref{sec:state_of_the_art_methods}.

\begin{table}
	\centering\footnotesize
	\begin{tabular}{l|cc|c|cc}
    & av.~(min/max) runtime[s]
      & speed-up
        & av.~(min/max) iter.
          & rel.~error
            & FOC cond.\\
  \hline
  FOM proj.~BFGS
    & 967.86 (176.69/3401.06)
      & --
        & 111.20 (25/400)
          & $3.13\cdot 10^{-3}$
          & $1.19\cdot 10^{-2}$\\
  TR-RB from \cite{QGVW2017}
    & 68.06 (43.28/88.21)
      & 10.40
        & 7.20 (8/13)
          & $1.34\cdot 10^{-6}$
            & $4.31\cdot 10^{-5}$\\
  1(a) TR-RB with $V_\red^\pr \neq V_\red^\du$
    & 44.56 (34.22/74.96)
      & 21.72
        & 8.80 (8/11)
          & $3.08\cdot 10^{-6}$
            & $4.64\cdot 10^{-5}$\\
  1(b) TR-RB with $V_\red^\pr = V_\red^\du$
    & 43.86 (34.09/74.35)
      & 22.07
        & 8.70 (8/10)
          & $3.37\cdot 10^{-6}$
          & $6.40\cdot 10^{-5}$
	\end{tabular}
	\vspace{0.15cm}
	\caption{%
    {\small Performance and accuracy of selected algorithms for the example from Section~\ref{sec:thermalfin} for ten optimization runs with randomly initial guesses $\mu^{(0)}$: averaged, minimum and maximum total computational time (column 2) and speed-up compared to the FOM variant (column 3); average, minimum and maximum number of iterations $k$ required until convergence (column 4), average relative error in the parameter (column 5) and average FOC condition (column 6).}
  }
  \label{table:fin}
\end{table}

\begin{SCfigure}
	\centering\footnotesize
\begin{tikzpicture}[]

\definecolor{color0}{rgb}{0.65,0,0.15}
\definecolor{color1}{rgb}{0.84,0.19,0.15}
\definecolor{color2}{rgb}{0.96,0.43,0.26}
\definecolor{color3}{rgb}{0.99,0.68,0.38}
\definecolor{color4}{rgb}{1,0.88,0.56}
\definecolor{color5}{rgb}{0.67,0.85,0.91}

\begin{axis}[
legend cell align={left},
legend style={fill opacity=0.8, draw opacity=1, text opacity=1, at={(0.99,0.01)}, anchor=south east, draw=white!80!black},
log basis y={10},
tick align=outside,
tick pos=left,
x grid style={white!69.0196078431373!black},
xlabel={time in seconds [s]},
xmajorgrids,
xmin=-5.7143321871758, xmax=236.000975930691,
xtick style={color=black},
y grid style={white!69.0196078431373!black},
ylabel={$\big\| \overline{\mu}-\mu^{(k)} \big\|_2$},
ymajorgrids,
ymin=1.91901353329293e-07, ymax=27.8370378432078,
ymode=log,
ytick style={color=black},
y=0.475cm
]
\addplot [semithick, color0, mark=triangle*, mark size=3, mark options={solid, fill opacity=0.5}]
table {%
0 9.99473140636549
4.34884524345398 7.35446882820324
18.6473112106323 7.3541512685285
30.8121237754822 7.35749530443356
32.9880452156067 6.31413726386274
47.2815909385681 6.31413448410398
58.816871881485 6.31207321123728
61.0466980934143 5.42208140924561
75.8011300563812 5.42204355619545
87.570513010025 5.43839223519159
93.4977016448975 5.12334963173444
108.184760808945 5.12324262569418
118.464211463928 5.12391766323573
122.161113739014 6.18793044571934
136.850122451782 6.18787453046038
139.825917005539 4.10890037737276
142.081712961197 3.43536524159338
145.785378932953 2.91800394897423
160.438888788223 2.9179971551271
162.668388605118 2.07914534719756
164.919844388962 1.947091236104
179.623940706253 1.9470783493143
181.859387397766 0.422463567698884
184.128856897354 0.643786083343712
186.372574567795 0.00702386653971826
201.036522626877 0.00702382140842531
};
\addlegendentry{\hspace{16pt} FOM proj.~BFGS}
\addplot [semithick, color5, mark=diamond*, mark size=3, mark options={solid, fill opacity=0.5}]
table {%
3.60644197463989 9.99473140636549
6.18412208557129 8.91952684680866
10.1918063163757 8.91275320910397
14.3757772445679 8.91255198939559
19.1285471916199 8.9086116684482
24.5155129432678 8.90683550331313
29.9072082042694 8.45403937547982
35.5541481971741 6.77948120396427
41.6052765846252 6.3782435509005
48.167160987854 3.11373071399215
55.3844101428986 1.40622934454799e-05
};
\addlegendentry{\hspace{19pt}TR-RB from \cite{QGVW2017}}
\addplot [semithick, color1, mark=triangle*, mark size=3, mark options={solid,rotate=180, fill opacity=0.5}]
table {%
2.86481094360352 9.99473140636549
5.50435853004456 8.91952684680866
8.40358209609985 8.91275320910397
11.4713573455811 8.91255198939559
15.107709646225 8.90773079070491
19.0532963275909 8.34476797197126
23.4172186851501 7.18301515662573
28.4373028278351 1.71823629414235
34.2188301086426 7.84148139372534e-06
};
\addlegendentry{1(a)\hspace{1pt} TR-RB with $V_\red^\pr \neq V_\red^\du$}
\addplot [semithick, color2, mark=*, mark size=3, mark options={solid, fill opacity=0.5}]
table {%
2.82310795783997 9.99473140636549
5.37446165084839 8.91952684680867
8.26291704177856 8.91275320910398
11.3642964363098 8.9125519893956
14.8835608959198 8.90773079070492
18.759311914444 8.34476797199037
23.1022231578827 7.18301515637139
28.0834274291992 1.71823629453389
34.0882914066315 7.84148159296135e-06
};
\addlegendentry{1(b) TB-RB with $V_\red^\pr = V_\red^\du$}
\end{axis}

\end{tikzpicture}
	\caption{%
    {\small Error decay and performance of selected algorithms for the example from Section~\ref{sec:thermalfin} for a single optimization run with random initial guess $\mu^{(0)}$ for $\tau_\text{\rm{FOC}} = 5\cdot 10^{-4}$: for each algorithm each marker corresponds to one (outer) iteration of the optimization method and indicates the absolute error in the current parameter, measured against the known desired optimum $\bar{\mu} = \mu^\text{d}$.
    In all except the FOM variant, the ROM is enriched in each iteration corresponding to Algorithm \ref{Alg:TR-RBmethod}, depending on the variant in question.}
  }
	\label{fig:fin_timing_mu_d}
\end{SCfigure}
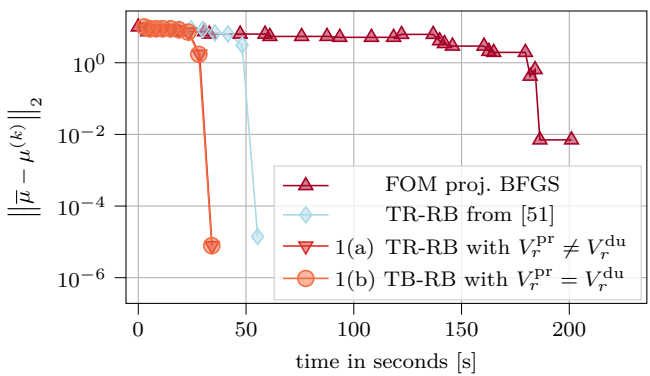

All considered optimization methods converged (up to a tolerance), but we restrict the presentation to the most informative ones (all results can be found in the accompanying code).
As we observe from Table~\ref{table:fin}, the ROM based adaptive TR-RB algorithms vastly outperform the FOM variant, noting that the computational time of the ROM variants includes all offline and online computations.
Figure~\ref{fig:fin_timing_mu_d} details the decay of the error decay in the optimal parameter during the optimization for a selected random initial guess.
We observe that the choice of the RB enrichment does not impact the performance of the algorithm for this example too much, see Remark~\ref{rem:fin}.
Also methods 2(\ref{enrich:lag}) and 3(\ref{enrich:lag}) show a comparable computational speed (not shown).
We also observe that the method from \cite{QGVW2017} requires more time and more iterations on average, 
variants 1 are still faster due to the enlarging of the TR radius and of the use of a termination criterium which does not depend on a posteriori estimates, which may force additional TR iterations.

\begin{remark}[Vanishing NCD-correction for the fin problem]
  \label{rem:fin}
	It is important to notice that this model problem is not suitable to fully demonstrate the capabilities of the NCD-corrected approach. The reason is
	that the choice of the functional is a misfit on only the root edge of the thermal fin, plus a Tikhonov regularization term.
  Since the root of the thermal fin is also the source of
	the primal problem, the dual solutions $p_{\red, \mu}$ of the reduced dual equation
	\eqref{eq:dual_solution_red} are thus linearly dependent on the respective primal solutions $u_{\red, \mu}$ and the correction term $r_\mu^\pr(u_{\red,\mu})[p_{\red,\mu}]$ for the NCD-corrected RB reduced functional from \eqref{eq:Jhat_red_corected} vanishes.
	In general, for quadratic objective functionals, this is not the case and all variants with correction terms thus waste unnecessary computational time.
\end{remark}

\subsection{Model problem 2: stationary heat distribution in a building}
\label{sec:mmexc_example}

For these experiments we consider as objective functional a weighted $L^2$-misfit on a domain of interest $D \subseteq \Omega$ and a weighted Tikhonov term comparable to design optimization, optimal control or inverse problems, i.e.
\begin{align} 
\mathcal{J}(v, \mu) = \frac{\sigma_D}{2} \int_{D}^{} (v - u^{\text{d}})^2 + \frac{1}{2} \sum^{M}_{i=1} \sigma_i (\mu_i-\mu^{\text{d}}_i)^2 + 1,
\label{eq:mmexc_example_J}
\end{align} 
with given desired state $u^{\text{d}} \in V$ and parameter $\mu^{\text{d}} \in \Params$ and weights $\sigma_D, \sigma_i$ specified further below. With respect to \eqref{P.argmin}, we thus have
$\Theta(\mu) = \frac{1}{2} \sum^{M}_{i=1} \sigma_i (\mu_i-\mu^{\text{d}}_i)^2 + \frac{\sigma_D}{2} \int_{D}^{} u^{\text{d}} u^{\text{d}} + 1$,
$j_{\mu}(u) = -\sigma_D \int_{D}^{} u^{\text{d}}u$
and
$k_{\mu}(u,v) = \frac{\sigma_D}{2} \int_{D}^{} uv$.
\begin{SCfigure}[][h]
	\centering\footnotesize
	\begin{subfigure}[c]{0.60\textwidth}
		\includegraphics[width=\textwidth]{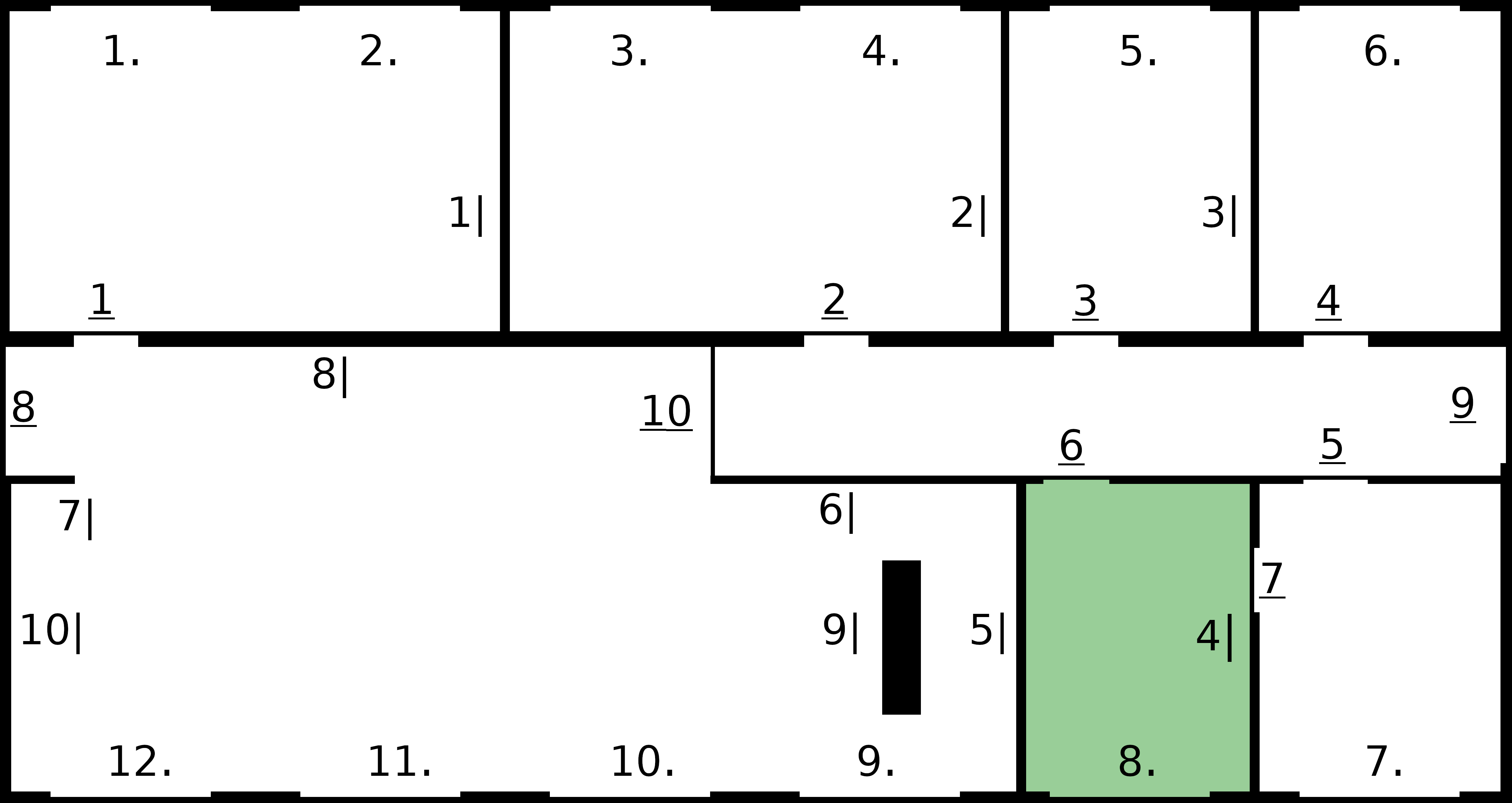}
	\end{subfigure}
	\centering
	\vspace{2em}
	\caption{{\small Definition~of Model problem 2: with $\Omega := [0,2] \times [0,1] \subset \mathbb{R}^2$.
			Numbers indicate affine components, where $i.$ is a window, $\underbar{i}$ are doors, and $i|$ are walls.
			The $i$-th heater is located under the $i$-th window. With respect to \eqref{eq:heat_equation}, we consider $\Gamma_\text{\rm{R}}:= \partial \Omega$, where
			$c_{\mu}$ contains outside wall $10|$, outside doors $\underbar{8}$ and $\underbar{9}$ and all windows. All other diffusion components enter the coefficient $\kappa_\mu$,
			whereas the heaters enter into the source term $f_\mu$. Furthermore, we set $u_{\text{out}}=5$ and
			the green region illustrates the domain of interest $D$.}}
	\label{ex1:blueprint}
\end{SCfigure}

Motivated by ensuring a desired temperature in a single room of a building floor,
we consider
blueprints
with windows, heaters, doors and walls, yielding parameterized diffusion, forces and boundary values
as sketched in Figure~\ref{ex1:blueprint}.\footnote{See \url{https://github.com/TiKeil/NCD-corrected-TR-RB-approach-for-pde-opt} for the definition of the data functions.} 
For simplicity we omit a realistic modeling of temperature and restrict
ourselves to academic numbers of the diffusion and heat source quantities.
We seek to ensure a desired temperature $u^\text{d}=18$ and set $\mu^\text{d}_i = 0$.
For the FOM discretization we choose a cubic mesh which resolves all features of the data functions derived from Figure~\ref{ex1:blueprint}, resulting in $\dim V_h = 80601$ degrees of freedom. We consider a ten-dimensional parameter example with three wall sets $\{1|,2|,3|,8|\}$, $\{4|,5|,6|,7|\}$ and $\{9|\}$ and
seven heater sets, $\{1,2\}$,
$\{3,4\}$ and $\{5\}$, $\{6\}$, $\{7\}$, $\{8\}$ and $\{9,10,11,12\}$ (each set governed by a single parameter component).
The set of admissible parameters is given by $\Params= [0.025,0.1]^3\times[0,100]^7$ and we choose
$\sigma_D= 100$ and $(\sigma_i)_{1\leq i\leq 10} = (10\sigma_w,5\sigma_w,\sigma_w,2\sigma_h,$
$2\sigma_h,\sigma_h,\sigma_h,\sigma_h,\sigma_h,4\sigma_h)$
in \eqref{eq:mmexc_example_J},
with $\sigma_w = 0.05$ and $\sigma_h= 0.001$.
The choice of $\sigma_i$ is related to the measure of the walls and how many heaters are considered in each group.
The other components of the data functions are fixed and thus not directly involved in the optimization process.
Briefly, the diffusion coefficient of air and inside doors is set to $0.5$, of the outside wall to $0.001$,
of outside doors $\underline{8}$ and $\underline{9}$ to $0.01$ and of windows to $0.025$.
For the energy product, we choose $\check{\mu}= (0.05,0.05,0.05,10,10,10,10,10,10,10)$.

We use this setup to inspect different TR-RB algorithms in Section \ref{sec:mmexc_opt_results}, but also to study the a posteriori error estimates from Section \ref{sec:a_post_error_estimates} in the following section.

\subsubsection{Numerical validation of the a posteriori error estimates}
\label{sec:estimator_study}

To study the performance of the a posteriori error estimates proposed in Section \ref{sec:a_post_error_estimates},
we neglect the outer-loop optimization and simply use a goal oriented adaptive greedy algorithm \cite{HDO2011} with basis extension (a) from Section \ref{sec:construct_RB}
to generate a ROM which ensures that the worst relative estimated error for the reduced functional and its gradient over the adaptively generated training set and a randomly chosen
validation set is below a prescribed tolerance of $\tau_{\text{\rm{FOC}}}= 5 \cdot 10^{-4}$. 
In particular we first ensure $\Delta_{\hat{J}_\red}(\mu)/\Jnoncor_\red(\mu) < \tau_{\rm{FOC}}$ for $\Delta_{\hat{J}_\red}$ from Proposition \ref{prop:Jhat_error}.i and continue with $\Delta_{\noncorgrad \Jnoncor_\red}(\mu)/\|\noncorgrad \Jnoncor_\red(\mu)\|_2 < \tau_{\noncorgrad \Jnoncor}$ for $\Delta_{\noncorgrad \Jnoncor_\red}$ from Proposition \ref{prop:grad_Jhat_error}.i, cf. \cite[Algorithm 2]{QGVW2017}. 
Let us mention that the goal for $\Delta_{\hat{J}_\red}$ is fulfilled after $24$ basis enrichments and we have $\Delta_{\noncorgrad \Jnoncor_\red}(\mu)/\|\noncorgrad \Jnoncor_\red(\mu)\| < 4.84$ after $56$ basis enrichments, where we artificially stop the algorithm
since the associated computational effort is already roughly 17 hours, demonstrating the need for the proposed adaptive TR-RB algorithm studied in the next section.

\begin{figure}[ht]
  \centering%
  \footnotesize%
	\input{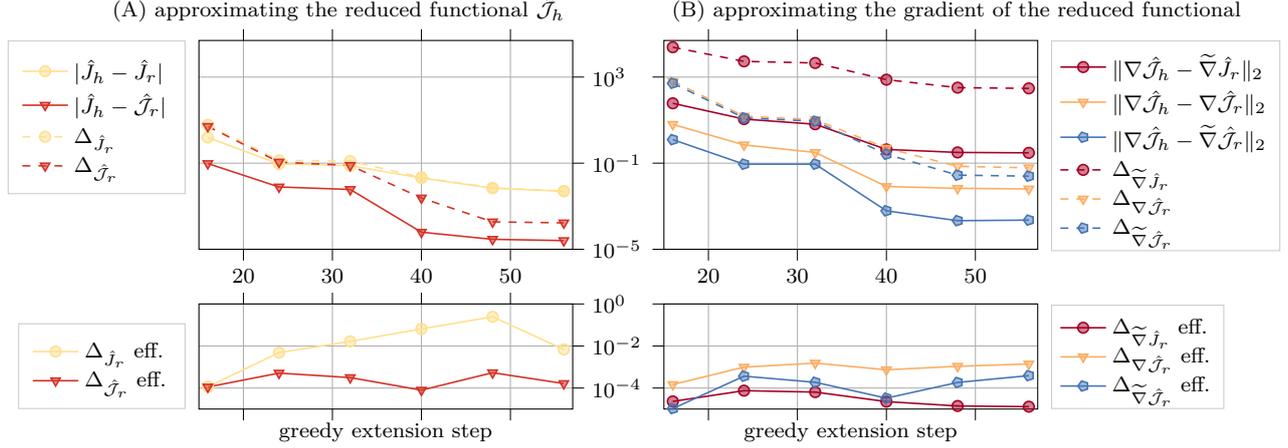}
	\caption{%
    {\small Evolution of the true and estimated model reduction error (top) in the reduced functional and its approximations (A) and the gradient of the reduced functional and its approximations (B), as well as error estimator efficiencies (bottom), during adaptive greedy basis generation for the experiment from Section \ref{sec:estimator_study}.
    Top: depicted is the $L^\infty(\Params_\textnormal{val})$-error for a validation set $\Params_\textnormal{val} \subset \Params$ of $100$ randomly selected parameters, i.e.~$|\hat{J}_h - \Jnoncor_\red|$ corresponds to $\max_{\mu \in \Params_\textnormal{val}} |\hat{J}_h(\mu) - \Jnoncor_\red(\mu)|$, $\Delta_{\cJhatn}$ corresponds to $\max_{\mu \in \Params_\textnormal{val}}\Delta_{\cJhatn}(\mu)$, $\|\nabla \hat{\mathcal{J}}_h - \nabla\cJhatn\|_2$ corresponds to $\max_{\mu \in \Params_\textnormal{val}}\|\nabla \hat{\mathcal{J}}_h(\mu) - \nabla\cJhatn(\mu)\|_2$, and so forth.
    Bottom: depicted is the worst efficiency of the respective error estimate (higher: better), i.e.~``$\Delta_{\hat{J}_\red}$ eff.'' corresponds to $\min_{\mu \in \Params_\textnormal{val}} |\hat{J}_h(\mu) - \Jnoncor_\red(\mu)|\,/\,\Delta_{\hat{J}_\red}(\mu)$, and so forth.}
  }
	\label{fig:estimator_study}
\end{figure}

As we observe from Figure \ref{fig:estimator_study}, the error of the NCD-corrected terms is of several orders of magnitude smaller than the corresponding terms of the standard approach.
It can also be seen that the (computationally more costly) sensitivity bases quantities, i.e.~$\noncorgrad\cJhatn$, show the best error.
However, all estimators for the corrected and sensitivity based quantities show a worse effectivity, hinting that there is still room for improvement.

\subsubsection{Optimization results}
\label{sec:mmexc_opt_results}

Similar to Section \ref{sec:thermalfin}, starting with ten different randomly drawn initial parameters $\mu^{(0)}$, we measure the total computational runtime, the number of TR iterations $k$ and the error in the optimal parameter for all combinations of adaptive TR algorithms from Section~\ref{sec:TRRB_and_adaptiveenrichment} and choice of RB spaces from Section~\ref{sec:construct_RB}, as well as for the state of the art methods from the literature from Section~\ref{sec:state_of_the_art_methods}.

All algorithms converged
(up to a tolerance) to the same point $\bar\mu$ and it was verified a posteriori that this point is a local minimum of $\Jhat$,
i.e.~it satisfies the second-order sufficient optimality conditions. 
The value of $\bar\mu$ in order to compute the relative error was calculated with the
FOM projected Newton method for a FOC condition tolerance of $10^{-12}$ and,
thanks to the choice of the cost functional weights,
the target $u^\text{d}$ is approximate by $\bar u$ with a relative error of $1.7\cdot 10^{-6}$ in $D$.
We consider the same setup for two different stopping tolerances $\tau_{\text{\rm{FOC}}}= 5\cdot 10^{-4}$ and $\tau_{\text{\rm{FOC}}}= 10^{-6}$ to demonstrate that the performance (both in terms of time and convergence) of the methods vastly depends on the choice of $\tau_{\text{\rm{FOC}}}$.
\begin{table}
	\centering\footnotesize
	\begin{subtable}{\textwidth}
	\centering
	\begin{tabular}{l|cc|c|cc}
    \textsc{\textbf{(A) Result for $\tau_{\text{\rm{FOC}}}= 5\cdot 10^{-4}$}}
    & av.~(min/max) runtime[s]
      & speed-up
        & av.~(min/max) iter.
          & rel.~error
            & FOC cond.\\
  \hline
  FOM proj.~BFGS
    & 332.57 (196.51/591.85)
      & --
        & 44.30 (30/60)
          & $1.40\cdot 10^{-3}$
          & $1.80\cdot 10^{-4}$\\
  TR-RB from \cite{QGVW2017}
    & 117.87 (70.29/166.31)
      & 2.82
        & 10.10 (6/14)
          & $5.46\cdot 10^{-4}$
            & $1.41\cdot 10^{-4}$\\
  1(a) TR-RB with $V_\red^\pr \neq V_\red^\du$
    & 91.50 (47.07/230.29)
      & 3.63
        & 8.30 (5/10)
          & $2.01\cdot 10^{-3}$
            & $2.04\cdot 10^{-4}$\\
  1(b) TR-RB with $V_\red^\pr = V_\red^\du$
    & 78.65 (54.69/114.36)
      & 4.23
        & 6.90 (5/9)
          & $2.53\cdot 10^{-4}$
          & $8.23\cdot 10^{-5}$ \\
  2(a) TR-RB semi NCD-corrected
    & 79.47 (63.38/94.28)
      & 4.18
        & 8.50 (7/10)
          & $5.98\cdot 10^{-5}$
          & $1.02\cdot 10^{-4}$ \\
  3(a) TR-RB NCD-corrected
    & 71.84 (50.38/87.16)
      & 4.63
        & 7.40 (5/9)
          & $1.09\cdot 10^{-3}$
          & $6.12\cdot 10^{-5}$ \\
	\end{tabular}
	\vspace{0.15cm}
	\end{subtable}
	\vspace{0.15cm}
	\begin{subtable}{\textwidth}
	\centering
		\begin{tabular}{l|cc|c|cc}
      \textsc{\textbf{(B) Result for $\tau_{\text{\rm{FOC}}}= 10^{-6}$}}
	    & av.~(min/max) runtime[s]
	      & speed-up
	        & av.~(min/max) iter.
	          & rel.~error
	            & FOC cond.\\
	  \hline
	  FOM proj.~BFGS
	    & 409.28 (317.25/637.55)
	      & --
	        & 57.00 (49/71)
	          & $2.82\cdot 10^{-6}$
	          & $3.35\cdot 10^{-7}$\\
	  TR-RB from \cite{QGVW2017}
	    & 614.81 (566.66/671.97)
	      & 0.66
	        & 40.00 (40/40)
	          & $8.46\cdot 10^{-7}$
	            & $8.44\cdot 10^{-8}$\\
	  1(a) TR-RB with $V_\red^\pr \neq V_\red^\du$
	    & 165.48 (92.26/417.24)
	      & 2.47
	        & 15.30 (10/40)
	          & $3.29\cdot 10^{-6}$
	            & $5.43\cdot 10^{-7}$\\
	  1(b) TR-RB with $V_\red^\pr = V_\red^\du$
	    & 86.39 (62.68/124.43)
	      & 4.74
	        & 7.80 (6/10)
	          & $3.52\cdot 10^{-6}$
	          & $3.03\cdot 10^{-7}$ \\
	  2(a) TR-RB semi NCD-corrected
	    & 90.37 (80.97/102.60)
	      & 4.53
	        & 9.80 (9/11)
	          & $8.12\cdot 10^{-7}$
	          & $2.26\cdot 10^{-7}$ \\
	  3(a) TR-RB NCD-corrected
	    & 88.24 (58.18/108.90)
	      & 4.64
	        & 8.90 (6/10)
	          & $2.65\cdot 10^{-6}$
	          & $2.73\cdot 10^{-7}$ \\
		\end{tabular}
		\vspace{0.15cm}
	\end{subtable}
	\caption{%
    {\small Performance and accuracy of selected algorithms for two choices of $\tau_\text{\rm{FOC}}$ for the example from Sec.~\ref{sec:mmexc_opt_results} for ten optimization runs with random initial guess, compare Table \ref{table:fin}.}
  }
  \label{table:MP2_com}
\end{table}

From Table~\ref{table:MP2_com}, we observe that all proposed TR-RB methods speed up the FOM projected BFGS method with
the NCD-corrected approach outperforming the others,
since the gradient used is the true one of the model function $\Jhat_\red$.
Moreover, independently of the model function, the algorithm from \cite{QGVW2017} is much slower,
demonstrating the positive impact of the suggested improvements on
enlarging the TR radius and on the termination criterium based on cheaply available FOM information
(instead of relying on an a posteriori estimate),
also visible in the
number of outer TR iterations.
Comparing our proposed TR variants in terms of iterations,
it is more beneficial to consider a single RB space, i.e.~$V^\pr_\red = V^\du_\red$.
While enrichment (a) is more costly and the time-to-ROM-solution is slightly larger, the richer space seems to allow for better approximations of $\Jhat_h$.

All methods approximate the optimal parameter $\bar\mu$ with a small relative error and reach the desired tolerance for the FOC condition.
However, in view of the resulting relative error in Table~\ref{table:MP2_com} and Figure~\ref{Fig:_FOC_cond}, we observe that the choice $\tau_\text{\rm{FOC}}= 5\cdot10^{-4}$ is not sufficiently small for this model problem. In fact, we observe for most of the variants, that this choice for the tolerance $\tau_\text{\rm{FOC}}$ does not guarantee an adequately low relative error in approximating $\bar\mu$ and
 affects the timings by stopping the method too early. 
We conclude that the choice $\tau_{\text{\rm{FOC}}}= 10^{-6}$ instead results in a valid optimum of all variants (up to a tolerance of $10^{-6}$).
Importantly, for this choice of $\tau_\text{\rm{FOC}}$, we point out that the variant from \cite{QGVW2017} only stopped because
we restricted the maximum number of iterations to $40$, although the FOC condition dropped under the depicted tolerance of $10^{-6}$. 
 \begin{figure}[h]
	\centering\footnotesize
	\hspace{-0.6cm}
	\begin{subfigure}{0.45\textwidth}
\begin{tikzpicture}

\definecolor{color0}{rgb}{0.65,0,0.15}
\definecolor{color1}{rgb}{0.84,0.19,0.15}
\definecolor{color2}{rgb}{0.96,0.43,0.26}
\definecolor{color3}{rgb}{0.99,0.68,0.38}
\definecolor{color4}{rgb}{1,0.88,0.56}
\definecolor{color5}{rgb}{0.67,0.85,0.91}

\begin{axis}[
  name=left,
legend cell align={left},
legend style={fill opacity=0.8, draw opacity=1, text opacity=1, at={(0.34,0.21)}, anchor=west, draw=white!80!black},
log basis y={10},
tick align=outside,
tick pos=left,
x grid style={white!69.0196078431373!black},
xlabel={time in seconds [s]},
xmajorgrids,
xmin=-15.6670947432518, xmax=329.008989608288,
xtick style={color=black},
y grid style={white!69.0196078431373!black},
ylabel={\(\displaystyle \| \overline{\mu}-\mu_k \|_2\)},
ymajorgrids,
ymin=4.4843436808366e-06, ymax=220.993014467563,
ymode=log,
ytick style={color=black}
]
\addplot [semithick, color0, mark=triangle*, mark size=3, mark options={solid, fill opacity=0.5}]
table {%
0 101.694749908925
11.8720343112946 76.6344328826546
17.9447810649872 65.5067669990911
26.5323231220245 58.1295253774286
35.2170124053955 59.0115951280537
41.6089336872101 58.4939381433727
48.0273129940033 58.3395819689597
54.4964914321899 57.4951884391855
61.2268347740173 44.9083655387143
67.8027763366699 38.7433342505646
74.2297432422638 30.1132884189381
80.7131202220917 30.541943990057
87.1376538276672 30.5797403707051
95.7394621372223 30.6877367833582
102.287242174149 30.5139572509563
108.70614361763 29.3771285500316
115.19947886467 26.5388565600953
121.608815670013 22.560037310177
128.0400390625 14.3479859523988
134.460395097733 13.5804698329301
140.89999961853 13.6677252430272
147.32558631897 13.6854911537257
153.684593200684 13.6702499690576
160.108421325684 13.5504640651469
166.547456979752 12.5237935862747
172.86968755722 10.1390172370278
179.272742271423 6.47893809307185
186.017577886581 3.91162947760179
192.293509244919 3.46480482852387
198.609087705612 3.46586360401667
205.003822803497 3.46694944167388
211.310054540634 3.46012010437098
217.55388212204 3.40591532107205
223.888615369797 3.22469574742039
230.194384098053 2.70155535774846
236.482502937317 1.84053812825313
242.779143333435 1.17557279387446
249.087074041367 1.05901236143825
255.489965438843 1.06020275336439
261.867829084396 1.0611800350202
268.231870174408 1.0603964339395
274.532961368561 1.05006611015456
280.875968456268 1.00745053958016
287.519969940186 0.890082498254083
293.972450733185 0.639372370454038
300.295924663544 0.311869643001277
306.908550024033 0.102122548506474
313.341894865036 0.0256872520461831
};
\addlegendentry{\hspace{16pt} FOM proj.~BFGS}
\addplot [semithick, color5, mark=diamond*, mark size=3, mark options={solid, fill opacity=0.5}]
table {%
15.5775761604309 101.694749914018
21.5570247173309 95.2544133844278
29.9755818843842 75.4099308159367
38.8098907470703 64.0679273362064
48.2749555110931 58.9002429218621
58.2099120616913 58.7665520191065
68.4788293838501 53.1967067086869
79.0315234661102 36.1345643809206
90.1136507987976 26.5812213583053
101.746352672577 7.77184075151648
114.605060577393 0.121698707208084
126.764373779297 0.12139085075229
};
\addlegendentry{\hspace{19pt}TR-RB from \cite{QGVW2017}}
\addplot [semithick, color1, mark=triangle*, mark size=3, mark options={solid,rotate=180, fill opacity=0.5}]
table {%
11.1392743587494 101.694749914018
17.11465883255 95.2544133844278
23.5293519496918 75.4099308159367
30.4334824085236 64.0679273362064
37.9646141529083 58.8254086036674
45.8417568206787 57.9178213150213
54.239414691925 45.7674046218088
63.0553381443024 17.1659141983604
73.1298186779022 0.528120998554676
85.0742800235748 0.088572996002305
95.2746829986572 0.00733824299393689
};
\addlegendentry{1(a)\hspace{1pt} TR-RB with $V_\red^\pr \neq V_\red^\du$}
\addplot [semithick, color2, mark=*, mark size=3, mark options={solid, fill opacity=0.5}]
table {%
11.18288397789 101.694749914018
17.8009340763092 95.0951073301147
25.4488184452057 75.0102498060299
34.0624237060547 63.7333999506352
43.5729718208313 58.8433158089068
53.5242161750793 58.0198982797859
64.5801253318787 42.5520357574727
76.3733286857605 5.97831025727406
89.4082858562469 0.00714132377950299
};
\addlegendentry{1(b) TB-RB with $V_\red^\pr = V_\red^\du$}
\addplot [semithick, color3, mark=pentagon*, mark size=3, mark options={solid, fill opacity=0.5}]
table {%
10.8595275878906 101.694749914018
16.7021052837372 95.2544133844278
22.9807689189911 75.4099308159367
29.6670250892639 64.0679273362064
36.9445209503174 58.8254086036674
44.4493291378021 57.9178213150213
52.4963920116425 45.7674046218088
61.2372426986694 17.0605235487557
70.4567382335663 7.47008221969308
80.7195756435394 0.0142931366807365
91.5153188705444 8.33143432198402e-05
};
\addlegendentry{2(a) semi NCD corrected}
\addplot [semithick, black, mark=asterisk, mark size=3, mark options={solid, fill opacity=0.5}]
table {%
13.4768707752228 101.694749914018
19.7069566249847 95.2544133844209
26.4656765460968 75.3587869929344
33.602260351181 63.9788199949804
41.4095425605774 58.859685583101
49.2445447444916 58.2492967427756
57.5780880451202 45.4725937425856
66.3905770778656 15.1974212649069
76.7422344684601 0.942934319070116
87.1576807498932 0.021167321405947
};
\addlegendentry{3(a) NCD corrected}
\end{axis}

\node[anchor=south west] at (left.north west) {\textsc{(A) Result for $\tau_{\text{\rm{FOC}}}=5\cdot 10^{-4}$}};
\end{tikzpicture} 	
	\end{subfigure}
	\hspace{0.6cm}
	\begin{subfigure}{0.45\textwidth}
\begin{tikzpicture}

\definecolor{color0}{rgb}{0.65,0,0.15}
\definecolor{color1}{rgb}{0.84,0.19,0.15}
\definecolor{color2}{rgb}{0.96,0.43,0.26}
\definecolor{color3}{rgb}{0.99,0.68,0.38}
\definecolor{color4}{rgb}{1,0.88,0.56}
\definecolor{color5}{rgb}{0.67,0.85,0.91}

\begin{axis}[
  name=right,
legend cell align={left},
legend style={fill opacity=0.8, draw opacity=1, text opacity=1, at={(1,0.5)}, anchor=west, draw=white!80!black},
log basis y={10},
tick align=outside,
tick pos=left,
x grid style={white!69.0196078431373!black},
xlabel={time in seconds [s]},
xmajorgrids,
xmin=-20.960649907589, xmax=440.173648059368,
xtick style={color=black},
y grid style={white!69.0196078431373!black},
ymajorgrids,
ymin=4.4843436808366e-06, ymax=220.993014467563,
ymode=log,
ytick style={color=black}
]
\addplot [semithick, color0, mark=triangle*, mark size=3, mark options={solid, fill opacity=0.5}]
table {%
0 101.694749908925
11.8227753639221 76.6344328826546
18.0333218574524 65.5067669990911
26.6583814620972 58.1295253774286
35.0693249702454 59.0115951280537
41.360090970993 58.4939381433727
47.6699612140656 58.3395819689597
54.0571613311768 57.4951884391855
60.475839138031 44.9083655387143
66.8279640674591 38.7433342505646
73.1730172634125 30.1132884189381
79.8124206066132 30.541943990057
86.1816501617432 30.5797403707051
94.744637966156 30.6877367833582
101.086725711823 30.5139572509563
107.417257547379 29.3771285500316
113.785773038864 26.5388565600953
120.147310733795 22.560037310177
126.465299129486 14.3479859523988
132.743663549423 13.5804698329301
139.105927705765 13.6677252430272
145.466085195541 13.6854911537257
151.809690237045 13.6702499690576
158.101904153824 13.5504640651469
164.427534341812 12.5237935862747
170.725522756577 10.1390172370278
177.048982620239 6.47893809307185
183.360107898712 3.91162947760179
189.69978260994 3.46480482852387
196.036733865738 3.46586360401667
202.56974029541 3.46694944167388
208.870253324509 3.46012010437098
215.07564496994 3.40591532107205
221.397597789764 3.22469574742039
227.699336051941 2.70155535774846
233.960841655731 1.84053812825313
240.292472839355 1.17557279387446
246.58556842804 1.05901236143825
252.882767438889 1.06020275336439
259.096759080887 1.0611800350202
265.451264858246 1.0603964339395
271.741575241089 1.05006611015456
278.136191129684 1.00745053958016
284.483489274979 0.890082498254083
290.997215986252 0.639372370454038
297.312686681747 0.311869643001277
305.364391565323 0.102122548506474
312.132185220718 0.0256872520461831
319.721675395966 0.00413078425544472
326.131166934967 0.00255412308842596
332.34970498085 0.00254610141212139
338.603756427765 0.0025429009369023
344.915750026703 0.0025127226206382
351.409331083298 0.0024398138461593
357.721709489822 0.00223763288468792
364.101721763611 0.00179700448435439
370.322831630707 0.00108508397827876
376.664766073227 0.000445536313619211
382.943562269211 0.000130715792809606
};
\addlegendentry{5. FOM proj. BFGS}
\addplot [semithick, color5, mark=diamond*, mark size=3, mark options={solid, fill opacity=0.5}]
table {%
15.0834636688232 101.694749914018
20.8926212787628 95.2544133844278
29.018415927887 75.4099308159367
37.7143726348877 64.0679273362064
46.7216105461121 58.9002429218621
56.0600709915161 58.7665520191065
66.2690076828003 53.1967067086869
76.5717370510101 36.1345643809206
87.8098843097687 26.5812213583053
99.1848306655884 7.77184075151648
111.681534767151 0.121698707208084
123.707145929337 0.12139085075229
136.807378292084 0.00213458006709451
150.148610115051 0.00070265838616608
163.837367534637 0.000310413647634123
177.788628339767 0.000310251084753871
192.031569719315 0.000125322245049535
206.855760335922 0.000125272954182688
221.766639947891 0.000125185538394569
237.465733289719 0.000125184171346979
253.313305139542 0.00012501858548298
270.196664571762 6.56715239340949e-05
286.975438117981 9.98241579955393e-06
304.209729194641 9.98109460785684e-06
321.546380996704 9.98107397048428e-06
339.791335582733 9.98107396716918e-06
358.050210237503 9.98090889164456e-06
376.922388076782 9.98090760207332e-06
396.134050607681 9.98090756246715e-06
415.789769649506 9.98082502525526e-06
436.588827371597 9.98082486641714e-06
455.848903417587 9.98082486641714e-06
475.412249565125 9.98082486641714e-06
496.140789747238 9.98082486641714e-06
514.43416762352 9.98082486641714e-06
532.975608110428 9.98082486641714e-06
551.482770204544 9.98082486641714e-06
571.509459733963 9.98082486641714e-06
590.078435897827 9.98082486641714e-06
608.564557790756 9.98082486641714e-06
627.022351980209 9.98082486641714e-06
};
\addlegendentry{Qian et al. 2017}
\addplot [semithick, color1, mark=triangle*, mark size=3, mark options={solid,rotate=180, fill opacity=0.5}]
table {%
10.9836044311523 101.694749914018
17.0856072902679 95.2544133844278
23.7086544036865 75.4099308159367
30.5910837650299 64.0679273362064
38.4077038764954 58.8254086036674
46.041802406311 57.9178213150213
54.2439885139465 45.7674046218088
62.901976108551 17.1659141983604
72.6504902839661 0.528120998554676
84.7693104743958 0.088572996002305
95.3249473571777 0.00733824299393689
105.960053682327 0.00527301821282116
116.768208503723 0.000162636786332236
127.789884090424 3.64077041750881e-05
};
\addlegendentry{1(a) standard lag.}
\addplot [semithick, color2, mark=*, mark size=3, mark options={solid, fill opacity=0.5}]
table {%
10.6433067321777 101.694749914018
17.1916410923004 95.0951073301147
24.9593110084534 75.0102498060299
33.0756437778473 63.7333999506352
42.0009367465973 58.8433158089068
51.5410561561584 58.0198982797859
62.0700180530548 42.5520357574727
73.2681591510773 5.97831025727406
85.8639888763428 0.00714132377950299
99.8546583652496 4.2715167840283e-05
};
\addlegendentry{1(b) standard uni.}
\addplot [semithick, color3, mark=pentagon*, mark size=3, mark options={solid, fill opacity=0.5}]
table {%
11.3806500434875 101.694749914018
17.2724416255951 95.2544133844278
23.6917324066162 75.4099308159367
30.6141812801361 64.0679273362064
37.9904551506042 58.8254086036674
45.2538509368896 57.9178213150213
53.1298584938049 45.7674046218088
61.3701057434082 17.0605235487557
70.0314295291901 7.47008221969308
79.8150877952576 0.0142931366807365
90.3450720310211 8.33143432198402e-05
100.228097200394 1.84661127400357e-05
};
\addlegendentry{2(a) semi NCD lag.}
\addplot [semithick, black, mark=asterisk, mark size=3, mark options={solid, fill opacity=0.5}]
table {%
13.2413511276245 101.694749914018
19.3898150920868 95.2544133844209
26.1817166805267 75.3587869929344
33.0169637203217 63.9788199949804
40.6761891841888 58.859685583101
48.2365906238556 58.2492967427756
56.5340046882629 45.4725937425856
65.5767681598663 15.1974212649069
76.236855506897 0.942934319070116
86.953501701355 0.021167321405947
98.2464504241943 8.72644160303025e-05
};
\addlegendentry{3(a) NCD lag.}
\legend{};
\end{axis}

\node[anchor=south west] at (right.north west) {\textsc{(B) Result for $\tau_{\text{\rm{FOC}}}= 10^{-6}$}};
\end{tikzpicture}
	\end{subfigure}
	\caption{{\small Error decay and performance of selected algorithms for two choices of $\tau_{\text{\rm{FOC}}}$ for the example from Section~\ref{sec:mmexc_opt_results} for a single optimization run with random initial guess, compare Figure \ref{fig:fin_timing_mu_d}.}
	}
	\label{Fig:_FOC_cond}
\end{figure}
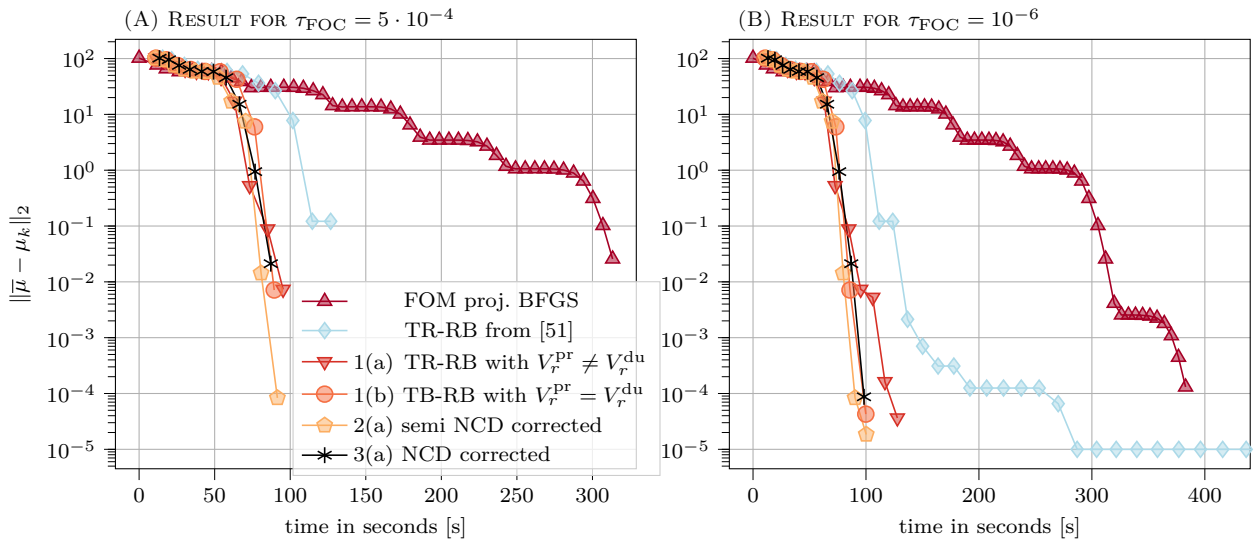
This is caused by the fact that in \cite{QGVW2017} the a posteriori estimate,
which is summed to the FOC condition, cannot get numerically small enough, showing the limit of the proposed stopping criterium in \cite{QGVW2017}. From Figure~\ref{Fig:_FOC_cond}(B) we conclude that the NCD-corrected approaches 2(\ref{enrich:lag}) and 3(\ref{enrich:lag}) outperform the standard ROM variant 1(\ref{enrich:lag}), which also
reached the maximum number of iterations for one of the ten samples. 
Consequently, the NCD-correction entirely resolves the issue of the variational crime (introduced by splitting the reduced spaces), since it shows roughly the same performance as
variant 1(\ref{enrich:single}). However, looking at the minimum and maximum number of computational time in Table~\ref{table:MP2_com}, variant 3(\ref{enrich:lag}) shows a less volatile and more robust
behavior. 

\section{Conclusion}
In this work we proposed and analyzed several variants of new adaptive Trust-Region Reduced Basis methods for parameterized partial differential equations.
First, we proved convergence of the modified algorithm in case of additional bilateral constraints on the parameter set, making this method more appealing for real-world applications.
Second, the use of a NCD-corrected RB reduced functional improves the RB approximation compared to the standard approach,
and enables the possibility of using an exact gradient in the case of separate RB spaces (each variant accompanied by rigorous a posteriori error estimates).
This approach turns out to be the most reliable in terms of computational time and accuracy,
outperforming the existing TR-RB method.
Furthermore, the proposed cheaply-computable criteria for enlarging the TR radius and for terminating the iterations ensure a faster convergence.
In future works we are interested in considering the projected Newton method to replace the projected BFGS method used in this contribution.
This leads to additional effort on developing a posteriori estimates for the RB approximation of the hessian and of the optimal parameter.
In addition, we are interested in combining the proposed TR-RB algorithm with localized RB methods for large-scale applications.

{\small
	\bibliographystyle{abbrv}
	\bibliography{bibliography}

\begin{thebibliography}{10}

\bibitem{AFS00}
E.~Arian, M.~Fahl, and E.~W. Sachs.
\newblock Trust-region proper orthogonal decomposition for flow control.
\newblock {\em Technical Report 2000-25, ICASE}, 2000.
\newblock http://www.dtic.mil/docs/citations/ADA377382.

\bibitem{BMNP2004}
M.~Barrault, Y.~Maday, N.~C. Nguyen, and A.~T. Patera.
\newblock An `empirical interpolation' method: application to efficient
  reduced-basis discretization of partial differential equations.
\newblock {\em C. R. Math. Acad. Sci. Paris}, 339(9):667--672, 2004.

\bibitem{BKR2000}
R.~Becker, H.~Kapp, and R.~Rannacher.
\newblock Adaptive finite element methods for optimal control of partial
  differential equations: Basic concept.
\newblock {\em SIAM Journal on Control and Optimization}, 39(1):113--132, 2000.

\bibitem{MR2556843}
O.~Benedix and B.~Vexler.
\newblock A posteriori error estimation and adaptivity for elliptic optimal
  control problems with state constraints.
\newblock {\em Comput. Optim. Appl.}, 44(1):3--25, 2009.

\bibitem{BCOW2017}
P.~Benner, A.~Cohen, M.~Ohlberger, and K.~Willcox, editors.
\newblock {\em Model reduction and approximation}, volume~15 of {\em
  Computational Science \& Engineering}.
\newblock Society for Industrial and Applied Mathematics (SIAM), Philadelphia,
  PA, 2017.
\newblock Theory and algorithms.

\bibitem{BOP+2017}
P.~Benner, M.~Ohlberger, A.~Patera, G.~Rozza, and K.~Urban, editors.
\newblock {\em Model Reduction of Parametrized Systems}, volume~17 of {\em
  Modeling, Simulation and Applications}.
\newblock Springer-Verlag GmbH, 2017.

\bibitem{BC08}
M.~Bergmann and L.~Cordier.
\newblock Optimal control of the cylinder wake in the laminar regime by
  trust-region methods and {POD} reduced-order models.
\newblock {\em J. Comput. Phys.}, 227(16):7813--7840, 2008.

\bibitem{BMV2020}
M.~Bernreuther, G.~Müller, and S.~Volkwein.
\newblock Reduced basis model order reduction in optimal control of a nonsmooth
  semilinear elliptic pde.
\newblock In {\em To appear in New trends in PDE constrained optimization},
  2020.

\bibitem{BCD+2011}
P.~Binev, A.~Cohen, W.~Dahmen, R.~DeVore, G.~Petrova, and P.~Wojtaszczyk.
\newblock Convergence rates for greedy algorithms in reduced basis methods.
\newblock {\em SIAM J. Math. Anal.}, 43(3):1457--1472, 2011.

\bibitem{buhr14}
A.~Buhr, C.~Engwer, M.~Ohlberger, and S.~Rave.
\newblock A numerically stable a posteriori error estimator for reduced basis
  approximations of elliptic equations.
\newblock {\em 11th World Congress on Computational Mechanics, WCCM 2014, 5th
  European Conference on Computational Mechanics, ECCM 2014 and 6th European
  Conference on Computational Fluid Dynamics, ECFD 2014}, pages 4094--4102,
  2014.

\bibitem{BEOR2017}
A.~Buhr, C.~Engwer, M.~Ohlberger, and S.~Rave.
\newblock Arbi{L}o{M}od, a simulation technique designed for arbitrary local
  modifications.
\newblock {\em SIAM J. Sci. Comput.}, 39(4):A1435--A1465, 2017.

\bibitem{CasTr15}
E.~Casas and F.~Tr\"{o}ltzsch.
\newblock Second order optimality conditions and their role in pde control.
\newblock {\em Jahresber. Dtsch. Math. Ver.}, 117:3--44, 2015.

\bibitem{CS2010}
S.~Chaturantabut and D.~C. Sorensen.
\newblock Nonlinear model reduction via discrete empirical interpolation.
\newblock {\em SIAM J. Sci. Comput.}, 32(5):2737--2764, 2010.

\bibitem{CAN12}
X.~Chen, S.~Akella, and I.~M. Navon.
\newblock A dual-weighted trust-region adaptive {POD} 4-{D} {V}ar applied to a
  finite-volume shallow water equations model on the sphere.
\newblock {\em Internat. J. Numer. Methods Fluids}, 68(3):377--402, 2012.

\bibitem{Clever2012}
D.~Clever, J.~Lang, S.~Ulbrich, and C.~Ziems.
\newblock {\em Generalized Multilevel SQP-methods for PDAE-constrained
  Optimization Based on Space-Time Adaptive PDAE Solvers}, pages 51--74.
\newblock Springer Basel, Basel, 2012.

\bibitem{CGT00}
A.~R. Conn, N.~I.~M. Gould, and P.~L. Toint.
\newblock {\em Trust Region Methods}.
\newblock MOS-SIAM Series on Optimization. Society for Industrial and Applied
  Mathematics, 2000.

\bibitem{Dede2012}
L.~Ded{\`e}.
\newblock Reduced basis method and error estimation for parametrized optimal
  control problems with control constraints.
\newblock {\em J. Sci. Comput.}, 50(2):287--305, 2012.

\bibitem{dihl15}
M.~A. Dihlmann and B.~Haasdonk.
\newblock Certified pde-constrained parameter optimization using reduced basis
  surrogate models for evolution problems.
\newblock {\em Computational Optimization and Applications}, 60(3):753--787,
  2015.

\bibitem{DHO2012}
M.~Drohmann, B.~Haasdonk, and M.~Ohlberger.
\newblock Reduced basis approximation for nonlinear parametrized evolution
  equations based on empirical operator interpolation.
\newblock {\em SIAM J. Sci. Comput.}, 34:A937--A969, 2012.

\bibitem{EPR2010}
J.~L. Eftang, A.~T. Patera, and E.~M. R{\o}nquist.
\newblock An ``{$hp$}'' certified reduced basis method for parametrized
  elliptic partial differential equations.
\newblock {\em SIAM J. Sci. Comput.}, 32(6):3170--3200, 2010.

\bibitem{GHH2016}
D.~Garmatter, B.~Haasdonk, and B.~Harrach.
\newblock A reduced basis {L}andweber method for nonlinear inverse problems.
\newblock {\em Inverse Problems}, 32(3):035001, 21, 2016.

\bibitem{gmsh}
C.~Geuzaine and J.-F. Remacle.
\newblock Gmsh: A 3-d finite element mesh generator with built-in pre-and
  post-processing facilities.
\newblock {\em International journal for numerical methods in engineering},
  79(11):1309--1331, 2009.

\bibitem{GGMRV17}
C.~Gr{\"a}{\ss}le, M.~Gubisch, S.~Metzdorf, S.~Rogg, and S.~Volkwein.
\newblock {POD} basis updates for nonlinear {PDE} control.
\newblock {\em at - Automatisierungstechnik}, 65(5):298--307, 2017.

\bibitem{GK2011}
M.~A. Grepl and M.~K\"archer.
\newblock Reduced basis a posteriori error bounds for parametrized
  linear-quadratic elliptic optimal control problems.
\newblock {\em C. R. Math. Acad. Sci. Paris}, 349(15-16):873--877, 2011.

\bibitem{GV17}
M.~Gubisch and S.~Volkwein.
\newblock Proper orthogonal decomposition for linear-quadratic optimal control.
\newblock In P.~Benner, A.~Cohen, M.~Ohlberger, and K.~Willcox, editors, {\em
  Model Reduction and Approximation: Theory and Algorithms}, pages 3--63. SIAM,
  Philadelphia, PA, 2017.

\bibitem{Haasdonk2013}
B.~Haasdonk.
\newblock Convergence rates of the {POD}-greedy method.
\newblock {\em ESAIM Math. Model. Numer. Anal.}, 47(3):859--873, 2013.

\bibitem{HAA2017}
B.~Haasdonk.
\newblock Reduced basis methods for parametrized pdes: A tutorial introduction
  for stationary and instationary problems.
\newblock {\em Model reduction and approximation: theory and algorithms},
  15:65, 2017.

\bibitem{HDO2011}
B.~Haasdonk, M.~Dihlmann, and M.~Ohlberger.
\newblock A training set and multiple bases generation approach for
  parameterized model reduction based on adaptive grids in parameter space.
\newblock {\em Math. Comput. Model. Dyn. Syst.}, 17(4):423--442, 2011.

\bibitem{HV06}
M.~Heinkenschloss and L.~N. Vicente.
\newblock Analysis of inexact trust-region {SQP} algorithms.
\newblock {\em SIAM J. Optim.}, 12(2):283--302, 2002.

\bibitem{HRS2016}
J.~S. Hesthaven, G.~Rozza, and B.~Stamm.
\newblock {\em {Certified Reduced Basis Methods for Parametrized Partial
  Differential Equations}}.
\newblock SpringerBriefs in Mathematics. Springer International Publishing,
  2016.

\bibitem{HO2014}
C.~Himpe and M.~Ohlberger.
\newblock Cross-gramian-based combined state and parameter reduction for
  large-scale control systems.
\newblock {\em Math. Probl. Eng.}, pages Art. ID 843869, 13, 2014.

\bibitem{HO2015}
C.~Himpe and M.~Ohlberger.
\newblock Data-driven combined state and parameter reduction for inverse
  problems.
\newblock {\em Adv. Comput. Math.}, 41(5):1343--1364, 2015.

\bibitem{MR2905006}
M.~Hinterm{\"u}ller, M.~Hinze, and R.~H.~W. Hoppe.
\newblock Weak-duality based adaptive finite element methods for
  {PDE}-constrained optimization with pointwise gradient state-constraints.
\newblock {\em J. Comput. Math.}, 30(2):101--123, 2012.

\bibitem{HPUU2009}
M.~Hinze, R.~Pinnau, M.~Ulbrich, and S.~Ulbrich.
\newblock {\em Optimization with PDE constraints}.
\newblock Springer Netherlands, 2009.

\bibitem{HV08}
M.~Hinze and S.~Volkwein.
\newblock Error estimates for abstract linear-quadratic optimal control
  problems using proper orthogonal decomposition.
\newblock {\em Computational Optimization and Applications}, 39(3):319--345,
  2008.

\bibitem{pat07}
D.~B.~P. Huynh, G.~Rozza, S.~Sen, and A.~T. Patera.
\newblock A successive constraint linear optimization method for lower bounds
  of parametric coercivity and inf--sup stability constants.
\newblock {\em Comptes Rendus Mathematique}, 345(8):473--478, 2007.

\bibitem{KTV13}
E.~Kammann, F.~Tr\"oltzsch, and S.~Volkwein.
\newblock A posteriori error estimation for semilinear parabolic optimal
  control problems with application to model reduction by pod.
\newblock {\em ESAIM: M2AN}, 47(2):555--581, 2013.

\bibitem{KTGV2018}
M.~K\"{a}rcher, Z.~Tokoutsi, M.~A. Grepl, and K.~Veroy.
\newblock Certified reduced basis methods for parametrized elliptic optimal
  control problems with distributed controls.
\newblock {\em J. Sci. Comput.}, 75(1):276--307, 2018.

\bibitem{Code}
T.~Keil, L.~Mechelli, M.~Ohlberger, F.~Schindler, and S.~Volkwein.
\newblock {Software for NCD corrected TR-RB approach for PDE constrained
  optimization, url: https://doi.org/10.5281/zenodo.3897230}, June 2020.

\bibitem{Kel99}
C.~T. Kelley.
\newblock {\em Iterative Methods for Optimization}.
\newblock Society for Industrial and Applied Mathematics, 1999.

\bibitem{LWG2010}
C.~Lieberman, K.~Willcox, and O.~Ghattas.
\newblock Parameter and state model reduction for large-scale statistical
  inverse problems.
\newblock {\em SIAM J. Sci. Comput.}, 32(5):2523--2542, 2010.

\bibitem{MR1887737}
W.~Liu and N.~Yan.
\newblock A posteriori error estimates for distributed convex optimal control
  problems.
\newblock {\em Adv. Comput. Math.}, 15(1-4):285--309, 2001.

\bibitem{milk2016pymor}
R.~Milk, S.~Rave, and F.~Schindler.
\newblock {pyMOR} -- {Generic Algorithms and Interfaces for Model Order
  Reduction}.
\newblock {\em SIAM Journal on Scientific Computing}, 38(5):S194--S216, 2016.

\bibitem{NRMQ2013}
F.~Negri, G.~Rozza, A.~Manzoni, and A.~Quateroni.
\newblock Reduced basis method for parametrized elliptic optimal control
  problems.
\newblock {\em SIAM J. Sci. Comput.}, 35(5):A2316--A2340, 2013.

\bibitem{NW06}
J.~Nocedal and S.~J. Wright.
\newblock {\em Numerical {O}ptimization}.
\newblock Springer Series in Operations Research and Financial Engineering.
  Springer New York, second edition, 2006.

\bibitem{Nocedal06}
J.~Nocedal and S.~J. Wright.
\newblock {\em Numerical Optimization}.
\newblock Springer, New York, NY, USA, second edition, 2006.

\bibitem{OSS2018}
M.~Ohlberger, M.~Schaefer, and F.~Schindler.
\newblock Localized model reduction in pde constrained optimization.
\newblock {\em International Series of Numerical Mathematics}, 169:143--163,
  2018.

\bibitem{OS2015}
M.~Ohlberger and F.~Schindler.
\newblock Error {C}ontrol for the {L}ocalized {R}educed {B}asis {M}ultiscale
  {M}ethod with {A}daptive {O}n-{L}ine {E}nrichment.
\newblock {\em SIAM J. Sci. Comput.}, 37(6):A2865--A2895, 2015.

\bibitem{OS2017}
M.~Ohlberger and F.~Schindler.
\newblock Non-conforming localized model reduction with online enrichment:
  towards optimal complexity in {PDE} constrained optimization.
\newblock In {\em Finite volumes for complex applications {VIII}---hyperbolic,
  elliptic and parabolic problems}, volume 200 of {\em Springer Proc. Math.
  Stat.}, pages 357--365. Springer, Cham, 2017.

\bibitem{OP2007}
I.~B. Oliveira and A.~T. Patera.
\newblock Reduced-basis techniques for rapid reliable optimization of systems
  described by affinely parametrized coercive elliptic partial differential
  equations.
\newblock {\em Optim. Eng.}, 8(1):43--65, 2007.

\bibitem{QGVW2017}
E.~Qian, M.~Grepl, K.~Veroy, and K.~Willcox.
\newblock A certified trust region reduced basis approach to pde-constrained
  optimization.
\newblock {\em SIAM Journal on Scientific Computing}, 39(5):S434--S460, 2017.

\bibitem{QMN2016}
A.~Quarteroni, A.~Manzoni, and F.~Negri.
\newblock {\em Reduced Basis Methods for Partial Differential Equations}.
\newblock La Matematica per il 3+2. Springer International Publishing, 1
  edition, 2016.

\bibitem{Rannacher2006}
R.~Rannacher.
\newblock On the adaptive discretization of pde-based optimization problems.
\newblock {\em PDE Constrained Optimization. Springer}, 2006.

\bibitem{RoggTV17}
S.~Rogg, S.~Trenz, and S.~Volkwein.
\newblock Trust-region {POD} using a-posteriori error estimation for semilinear
  parabolic optimal control problems.
\newblock {\em Konstanzer Schriften in Mathematik No. 359}, 2017.
\newblock http://nbn-resolving.de/urn:nbn:de:bsz:352-0-401106.

\bibitem{MR2892950}
A.~R{\"o}sch and D.~Wachsmuth.
\newblock A-posteriori error estimates for optimal control problems with state
  and control constraints.
\newblock {\em Numer. Math.}, 120(4):733--762, 2012.

\bibitem{rozza2007}
G.~Rozza, D.~B.~P. Huynh, and A.~T. Patera.
\newblock Reduced basis approximation and a posteriori error estimation for
  affinely parametrized elliptic coercive partial differential equations.
\newblock {\em Archives of Computational Methods in Engineering}, 15(3):1,
  2007.

\bibitem{SS13}
E.~W. Sachs and M.~Schu.
\newblock A-priori error estimates for reduced order models in finance.
\newblock {\em ESAIM Math. Model. Numer. Anal.}, 47(2):449--469, 2013.

\bibitem{YM2013}
Y.~Yue and K.~Meerbergen.
\newblock Accelerating optimization of parametric linear systems by model order
  reduction.
\newblock {\em SIAM Journal on Optimization}, 23(2):1344--1370, 2013.

\bibitem{ZF2015}
M.~J. Zahr and C.~Farhat.
\newblock Progressive construction of a parametric reduced-order model for
  {PDE}-constrained optimization.
\newblock {\em Internat. J. Numer. Methods Engrg.}, 102(5):1111--1135, 2015.

\end{thebibliography}
}

\end{document}